\numberwithin{figure}{section}
\numberwithin{figure}{section}
\newtheorem{theorem}{Theorem}[section]
\newtheorem{lemma}[theorem]{Lemma}
\newtheorem{proposition}[theorem]{Proposition}
\theoremstyle{definition}
\newtheorem{definition}[theorem]{Definition}
\newtheorem{remark}[theorem]{Remark}
\numberwithin{equation}{section}
\def\rightangle{\vcenter{\hsize5.5pt
    \hbox to5.5pt{\vrule height7pt\hfill}
    \hrule}}
\newcommand{\rtangle}{\mathrel{\rightangle}}
\newcommand{\wto}{\rightharpoonup}
\newcommand{\de}{\delta}
\newcommand{\R}{\mathbb{R}}
\newcommand{\N}{\mathbb{N}}
\newcommand{\Z}{\mathbb{Z}}
\newcommand{\Ha}{\mathcal{H}}
\newcommand{\T}{\mathbb{T}}
\newcommand{\beq}{\begin{equation}}
\newcommand{\eeq}{\end{equation}}
\renewcommand{\P}{P_{\T^3}}
\newcommand{\dist}{{\rm dist}}
\newcommand{\um}{\"}
\newcommand{\eps}{\varepsilon}
\newcommand{\e}{\varepsilon}
\newcommand{\vphi}{\varphi}
\newcommand{\diver}{\operatorname{div}}
\newcommand{\Div}{\operatorname{div}}
\newcommand{\pa}{\partial}
\newcommand{\Ht}{\widetilde H}
\newcommand{\Om}{\Omega}
\newcommand{\medint}{-\kern -,375cm\int}
\newcommand{\medintinrigo}{-\kern -,315cm\int}
\begin{document}

\title[Mullins-Sekerka and surface diffusion flows]{Nonlinear stability results for  the modified Mullins-Sekerka  and the surface diffusion flow}

\author{E. Acerbi, N. Fusco, V. Julin, M. Morini}
\address[E.\ Acerbi]{Dipartimento di Matematica, Universit\`a di Parma, Italy}
\email{emilio.acerbi@unipr.it}
\address[N.\ Fusco]{Dipartimento di Matematica e Applicazioni,
Universit\`{a} di Napoli ``Federico II'', Italy}
\email{n.fusco@unina.it}
\address[V.\ Julin]{Matematiikan ja tilastotieteen laitos,  Jyv\"as\-kyl\"an Yliopisto, 
         Finland}
\email{vesa.julin@jyu.fi}
\address[M.\ Morini]{Dipartimento di Matematica, Universit\`a  di Parma, Italy}
\email{massimiliano.morini@unipr.it}

\keywords{Mullins-Sekerka flow, Hele-Shaw flow, Ohta-Kawaski energy, gradient flows, asymptotic stability, global-in-time existence, large-time behavior, stable periodic structures}
%\subjclass[2010]{????}
%\date{\today}

\begin{abstract}  It is shown that any three-dimensional periodic configuration that is strictly stable  for the area functional is exponentially stable for the surface diffusion flow and for the Mullins-Sekerka or Hele-Shaw flow. The same result holds for 
three-dimensional periodic configurations that are strictly stable with respect to the sharp-interface Ohta-Kawaski energy. In this case, they are exponentially stable for the so-called modified Mullins-Sekerka flow.
\end{abstract}

\maketitle
\tableofcontents

\section{Introduction}
In this paper we establish new  global-in-time  existence and long-time behavior results in three-space dimensions for two physically relevant geometric motions; namely,  the (modified) Mullins-Sekerka and the surface diffusion flows.   Let $\Om$ be a bounded open set of $\R^N$. We start by recalling that  a smooth flow of sets $(E_t)_{t}\subset\subset\Omega$,   defined on some (maximal) time interval $(0, T^*)$, is a solution of  the (modified) Mullins-Sekerka flow if the evolution is governed by the following law
\beq\label{MSintro}
\begin{cases}
V_t= [\pa_{\nu_t}w_t] & \text{on $\pa E_t$,}\\
\Delta w_t=0 & \text{in $\Om\setminus \pa E_t$,}\\
w_t=H_t+4\gamma v_t & \text{on $\pa E_t$,}\\
-\Delta v_t=u_{E_t}-\medintinrigo_{\Om}u_{E_t}\,, & \text{in $\Om$,}
\end{cases}
\eeq
where both $w_t$ and $v_t$ are subject to homogeneous Neumann boundary conditions on $\pa \Om$ or to {\em periodic boundary conditions} in the case $\Om=\T^N$, with $\T^N$ denoting the $N$-dimensional flat torus. Here and in the following $V_t$ stands for the outer normal velocity of the moving boundary $\pa E_t$, $H_t$ denotes the mean curvature of $\pa E_t$, $\gamma\geq 0$ is a fixed parameter, $u_{E_t}:=2\chi_{E_t} -1$ and $[\pa_{\nu_t}w_t]$ is a short notation for  the jump of the normal derivative of $w_t$ at $\pa E_t$; more precisely, $[\pa_{\nu_t}w_t]:=\pa_{\nu_t}w^+_t-\pa_{\nu_t}w^-_t$, with  $w^+_t$ and $w^-_t$ denoting the restrictions of $w_t$ to $\Om\setminus E_t$ and $E_t$, respectively.  In the case $\gamma=0$ the {\em potential} $v_t$ becomes irrelevant and we recover the classical Mullins-Sekerka flow (see \cite{MS}), which is also sometimes referred to as the  {\em two-phase Hele-Shaw flow with surface tension} (see \cite{ES97}). Such models arise as singular limits of the Cahn-Hilliard equation in the case $\gamma=0$, as formally derived in \cite{Pego} and then rigorously proved in \cite{AlBaCh}, and of a modified (nonlocal) version of the Cahn-Hilliard equation in the case $\gamma>0$. Such a modified equation has been proposed in \cite{OK} to describe phase separation in diblock copolymer melts and its convergence to \eqref{MSintro}  has been established in \cite{Le}. Under Neumann boundary conditions if $\gamma=0$ and  $(E_t)_{t}\subset\!\subset\Omega$ then  Alexandrov's Theorem implies that the only possible equilibria for \eqref{MSintro}
are union of balls.   On the contrast, in the periodic case  or when $\gamma>0$  the sets of equilibria has a  much  richer structure as we will see below. 
 
 The second geometric flow we are dealing with is the motion of sets  {\em  by surface diffusion}; in this case the evolution of $E_t$ is governed by the law
 \beq\label{SDintro}
 V_t=\Delta_\tau H_t \qquad\text{on $\pa E_t$,}
 \eeq
where $\Delta_\tau$  denotes the surface Laplacian or Laplace-Beltrami operator on $\pa E_t$.  Such a law has been proposed in the physical literature to describe the evolution of interfaces between solid phases driven by surface diffusion of atoms under the action of a chemical potential (see for instance \cite{GurJab} and the references therein).  

The two flows share several features: they are both {\em volume preserving} and may be regarded as  suitable gradient flows of the  (nonlocal) area functional (also known as sharp-interface Ohta-Kawasaki energy):
\beq\label{Jintro}
J(E):= P_\Om(E)+ \gamma \int_{\Om}\int_\Om G(x,y)u_E(x)u_E(y)\, dxdy\,,
\eeq
where $P_\Om$ is the standard perimeter (or area) functional in $\Om$, while $G$ stands for the Green's function in $\Om$ and $u_E:=2\chi_E-1$.  More precisely, \eqref{MSintro} can be  seen as the gradient flow of \eqref{Jintro} with respect to a suitable $H^{-\frac12}$-Riemannian structure (see for instance \cite{Le}) formally defined on the space of shapes, while \eqref{SDintro} is the gradient flow of the area function, that is of \eqref{Jintro} with $\gamma=0$, with respect to a $H^{-1}$-type Riemannian structure  (see \cite{cahn-taylor94}).  
In contrast with the more standard mean curvature flow, one cannot expect a   comparison principle to hold for \eqref{MSintro} and 
\eqref{SDintro}. This makes  it very difficult to apply  weak methods such as those  based on the notion of viscosity solution.
 
Since in fact singularities (such as pinching) may form in finite time (see for instance \cite{BerBerWit, Mayer}),  as far  as smooth flows are concerned one can only expect  in general local-in-time existence and uniqueness: see \cite{Chen} and \cite {ES97, Prockert} for the Hele-Shaw model in the two-dimensional and the $n$-dimensional case, respectively, \cite{EscNis} for the modified Mullins-Sekerka flow, and \cite{ElGar} and \cite{EMS} for the motion by surface diffusion in two and higher dimensions, respectively. For a very weak (distributional) notion of global-in-time solution to the Mullins-Sekerka flow in three dimensions,  obtained via a minimizing movements approach, we refer to \cite{Ro}.
Finally,  we remark that, again in contrast with the motion by mean curvature, both \eqref{MSintro} and \eqref{SDintro} do not preserve convexity (see \cite{Ito, EsMa}).

The nonlocal area functional \eqref{Jintro} is the sharp-interface limit of the so-called $\e$-diffuse Ohta-Kawasaki energy, which was proposed in  \cite{OK} to model the behavior of a class of two-phase materials called {\em diblock coplymers}. From the mathematical point of view, the main new feature is the presence of a nonlocal Green's function term,   which acts as a long-range repulsive interaction of Coulombic type.  While the perimeter term favors  the formation of large connected regions of pure phases  with minimal interface area,  the double integral term prefers scattered configurations with  several tiny connected components that   try to separate from each other as much as possible, due to the repulsive nature of their interaction.  The two competing trends often lead to the formation of stable nontrivial patterns, with a rather complex structure.  We refer to \cite{MorRev} and the references therein for a review on the 
Ohta-Kawasaki energy and some  related mathematical results. 

We now describe the results of our paper. As already mentioned, we are interested in finding a class of initial data for which we can prove the existence of a  global-in-time solution and study its long-time behavior. We focus on the periodic setting in three-dimensions; that is, we take $\Om=\T^3$ in \eqref{MSintro} and \eqref{SDintro} and we assume spatial one-periodicity both on the evolving sets and the functions involved. In other words, finding a solution in $\T^3$ is equivalent to finding a solution in the whole space $\R^3$, which is one-periodic in space. All the results and arguments that we present clearly hold also for $N=2$.  However, for the sake of presentation we decided to stick to the physically relevant case $N=3$.  

Because of the gradient flow structure of the two flows, it is very natural to expect that if the initial set is sufficiently close to a stable critical point (or a local minimizer) $F$ of the energy functional $J$, then the flow exists for all times and asymptotically converges to $F$.  

The proper notion of criticality and stability can be defined  in terms of the first and second variation of the energy by a standard procedure that we recall in the following:  We say that a smooth subset $F\subset\T^3$ is {\em critical} for \eqref{Jintro} if for any (admissible) smooth one-parameter family of volume preserving diffeomorphisms $(\Phi_t)_t$ we have that $\frac{d}{dt} J(\Phi_t(F))\bigl|_{t=0}=0$. It turns out 
(see for instance \cite{ChSte}) that a smooth set $F$ is critical if and only if 
\beq\label{critintro}
H_{\pa F}+4\gamma v_F=\mathrm{constant} \qquad\text{on $\partial F$,}
\eeq
where $H_{\pa F}$ is the mean curvature of $\pa F$ and $v_F(\cdot):=\int_{\T^3}G(\cdot, y)(2\chi_F(y)-1)\, dy$ is the potential associated with  $F$ (see also \eqref{MSintro} where $v_t$ stands for $v_{F_t}$). When $\gamma=0$ one recovers the classical constant mean curvature condition. Next, given a critical set $F$ we may compute its  {\em second variation}:  By the results of \cite{ChSte} (see also \cite{AFM, JuPi, Mur0}), we  associate with it a  quadratic
form $\pa^2J(F)$ defined over all functions $\vphi\in \Ht(\pa F):=\{\vphi\in H^1(\pa F):\, \int_{\pa F}\vphi\, d\Ha^2=0 \}$. This quadratic form is related to the second variation of $J$ by the following equality
\beq\label{J2intro}
\frac{d^2}{dt^2} J(\Phi_t(F))\biggl|_{t=0}=\pa^2J(F)[X\cdot \nu]\,,
\eeq
where $X\cdot\nu$ is the (outer) normal component of the velocity field $X$ of $(\Phi_t)_t$ on $\pa F$. The expression of $\pa^2J(F)$ can be computed explicitly, see \eqref{J2}.  Note that the condition $\int_{\pa F}\vphi\, d\Ha^2=0$ is related to the fact that we allow only volume preserving variations.

 The notion of stability  amounts to requiring that $\pa^2J$ is positive definite in a suitable sense.  However, we have to take into account that $J$ is translation invariant, so that in particular $J(F)=J(F+t\eta)$ for all 
$\eta\in \R^3$ and $t\in \R$. By differentiating   twice this identity with respect to $t$, one obtains 
$\pa^2J(F)[\eta\cdot \nu]=0$, thus showing that  there is always a finite dimensional subspace of {\em infinitesimal translations}
\beq\label{TEintro}
T(\pa  F):=\{\vphi\in \Ht(\pa F) \, : \, \vphi = \eta\cdot \nu, \, \, \eta\in \R^3\}
\eeq
where the second variation vanishes. In view of these observations, we say that the critical set $F$ is {\em strictly stable} if
\beq\label{J2>0intro}
\pa^2J(F)[\vphi]>0\qquad\text{for all $\vphi\in T^\perp(\pa F) \setminus \{0\}$}.
\eeq

In \cite[Theorem~1.1]{AFM} (see also \cite{JuPi} for the case of Neumann boundary conditions) it is shown that strictly stable critical sets are in fact \emph{isolated local minimizers} of the functional $J$ with respect to small $L^1$-perturbations.  
The main purpose of this paper is to show that the latter (static) stability property extends to the evolutionary case.  In Theorems~\ref{main thm 1} and \ref{main thm 2} we show that any strictly stable critical set is {\em asymptotically stable} for both \eqref{MSintro} and \eqref{SDintro}.  More precisely, we have:

\vspace{5pt}
\noindent{\bf Main Result.} {\em Let $F\subset \T^3$ be a smooth set satisfying \eqref{critintro} and \eqref{J2>0intro} (with $\gamma=0$ in the case of the surface diffusion flow).  If  $E_0$ is sufficiently close to $F$, then both the periodic modified Mullins-Sekerka flow and the periodic surface diffusion flow  starting from $E_0$  are defined for all times and converge to a translate of $F$ exponentially fast.} 
\vspace{5pt}

\noindent For the proper notion of closeness to $F$  and of exponential convergence we refer to the precise statements of the aforementioned theorems. 

Let us now comment on the class of initial data to which our main result can be applied. In the  three-dimensional case  and for the area functional ($\gamma=0$) the stable periodic sets are classified (see for instance \cite{Ros07}): they are  {\em lamellae} or {\em balls} or 
{\em cylinders} or {\em triply periodic structures} such as {\em gyroids}. It is rather easy to see that the first three configurations are in fact  strictly stable (with respect to volume preserving variations), while the strict stability of triply periodic sets has been established in some cases (see for instance in \cite{Gr, GrWo, Ross}). Due to our results, all such structures are exponentially stable for the periodic versions of \eqref{MSintro} and \eqref{SDintro}.

 As for the case $\gamma>0$ a complete classification of the stable periodic structures is still missing. However, it has been shown that lamellar configurations are strictly stable if the number of interfaces is larger than a minimum value $k(\gamma)$, where $k(\gamma)\to +\infty$ as $\gamma\to\infty$ (see \cite{AFM, ChSte}). Moreover, again by the results of \cite{AFM} one can show that if $F$ is  any periodic set that is strictly stable for the area functional, then for all $\gamma>0$ sufficiently small it is possible to find sets $F_\gamma$ that are strictly stable for \eqref{Jintro} (with the corresponding $\gamma$) in such a way that $F_\gamma\to F$ smoothly as $\gamma\to 0^+$. If instead we fix the value of $\gamma$ and $F$ is as before,  then we may find  sets $E$ that are stable for the the functional $J$ and closely resemble a rescaled version of $F$. More precisely, the following  has been shown in \cite{Cristoferi}: Let  $F\subset\T^3$ be strictly stable for the area functional,  and for any $k\in \N$ denote by $F_k$ the $1/k$-periodic set $\frac{F}k$. Then, for every $\e>0$ there exists $\bar k=\bar k(\gamma, \e)\in \N$ such that for all $k\geq \bar k$ we may find a set $E$, which is $\e$-close to $F_k$ in a $C^1$-sense and strictly stable for $J$ with respect to $1/k$-periodic variations. Moreover, the set $E$ can   be constructed in such a way that its mean curvature is  uniformly close to a constant.
Our main result clearly applies to all such sets, yielding that they are exponentially stable for the $1/k$-periodic version of the modified Mullins-Sekerka flow. 

A few comments about previous related results are in order: most of them  treat the exponential stability of $N$-dimensional spheres  both  for the Hele-Shaw  (\cite{Chen, ES98, Prockert}) and the surface diffusion flow (\cite{EMS, Wheeler}), with few exceptions in the case of the surface diffusion flow, like the infinite cylinders considered in  \cite{LeCSim, LeCSim16} and the two-dimensional triple junctions configurations studied in  \cite{GarItoKoh} (under Neumann conditions). It seems also that no asymptotic stability results for the modified Mullins-Sekerka flow were known before. Moreover, all the previous works deal with specific examples, but to the best of our knowledge no general  {\em linear versus nonlinear stability principle} has been established for \eqref{MSintro} and \eqref{SDintro} prior to  our main result. 

Most of the aforementioned papers use semigroups techniques combined with an ad hoc  center manifold analysis in order to deal with the translation invariance. Our approach instead is completely different, more variational in nature, and based on the  derivation of suitable energy identities. In this respect, our method is closer in spirit to  that  of  \cite{Chen} and  \cite{Wheeler},  where  energy identities are the key tool to establish the desired exponential stability.

Although many technical details in the proofs of our main Theorems~\ref{main thm 1} and \ref{main thm 2} are different, the underlying general argument and strategy  is the same. We overview it for the convenience of the reader.  The  starting crucial observation is that the following  energy identity holds along the flow $(E_t)_{t\in (0, T^*)}$ (see Lemmas~\ref{calculations} and \ref{calculationsbis}): Setting $\mathcal{E}(E_t):=-\frac{d}{dt} J(E_t)$, we have
\beq\label{identita1}
-\frac{d^2}{dt^2} J(E_t)=\frac{d}{dt}\mathcal{E}(E_t)=-2\pa^2 J(E_t)[V_t]+ R(E_t)\,,
\eeq
 where $\pa^2J$ is the second variation quadratic form introduced in \eqref{J2intro}, $V_t$ is the normal velocity of the moving boundary and $R(E_t)$ is  a remainder whose explicit expression depends on whether $(E_t)_t$ solves \eqref{MSintro} or \eqref{SDintro}.  Next we implement a stopping time argument; namely, we consider the maximal time $\bar t$ such that 
 \beq\label{stopping}
 \dist_{C^1}(E_t, F)< \e_0 \quad\text{and}\quad \mathcal{E}(E_t)<2\de_0 \qquad\text{for all $t\in (0, \bar t)$,}
 \eeq
where $\dist_{C^1}(E_t, F)$ stands for a suitable $C^1$-distance of $E_t$ from the stable critical set $F$ and $\e_0, \de_0$ are (small) positive constants to be chosen. Clearly, by choosing the initial set $E_0$ so close to $F$ that
\beq\label{initialintro}
 \dist_{C^1}(E_0, F)< \e_0 \quad\text{and}\quad \mathcal{E}(E_0)\leq \de_0 
 \eeq
 we can ensure that 
$\bar t>0$. The purpose is to show that $\bar t$ coincides with the maximal time of existence $T^*$. 
The argument now proceeds by contradiction, assuming that  $\bar t<T^*$ and that $ \mathcal{E}(E_{\bar t})=2\de_0$ or $\dist_{C^1}(E_{\bar t}, F)=\e_0$. Assume first that 
\beq\label{contraintro}
\mathcal{E}(E_{\bar t})=2\de_0\,.
\eeq
At this point, the idea is to exploit the strict stability assumption on $F$, and the closeness of $E_t$ to $F$ (ensured by \eqref{stopping}, with $\de_0$ smaller if needed)  to show that the quadratic form $\pa^2J (E_t)$ remains positive definite outside the space of infinitesimal translations $T(\pa E_t)$ (see \eqref{TEintro}). This observation, together with a delicate estimate  showing that $V_t$ remains bounded away from $T(\pa E_t)$, allows one to conclude that 
\beq \label{positiveintro}
\pa^2 J(E_t)[V_t]\geq \sigma \|V_t\|^2_{H^1(\pa E_t)} 
\eeq
 in $(0, \bar t)$ for a suitable constant $\sigma>0$. Next, one has to control the remainder $R(E_t)$ in \eqref{identita1}; more precisely, one shows that 
 \beq\label{remainder}
 |R(E_t)|\leq \e \|V_t\|^2_{H^1(\pa E_t)}\,,
 \eeq
where the constant $\e$ can be made arbitrarily small, provided that   $\e_0$ and $\de_0$ are chosen properly (small)  in 
\eqref{initialintro}.  The above inequality relies on delicate  boundary estimates for harmonic extensions in the case of the Mullins-Sekerka flow (see Proposition~\ref{harmonic estimates}) and on  the geometric interpolation inequality established in Lemma~\ref{nasty} in the case of the surface diffusion flow. From the technical point of view, this is  where the dimension restriction $N\leq 3$ plays a role in our argument.   Finally, one has to show that 
\beq\label{finallyintro}
 \mathcal{E}(E_{t}) \leq C\|V_t\|^2_{H^1(\pa E_t)}\,,
\eeq
with the constant $C>0$ depending only on the $C^1$-bounds on $\pa E_t$ provided by \eqref{stopping}. Collecting \eqref{identita1} and \eqref{positiveintro}--\eqref{finallyintro} yields the existence  of  $c_0>0$ such that 
$$
\frac{d}{dt}\mathcal{E}(E_t)\leq -c_0\,  \mathcal{E}(E_{t})\,,
$$
so that, by integration, 
\beq\label{expdecayintro}
\mathcal{E}(E_{t})\leq \mathcal{E}(E_{0})\mathrm{e}^{-c_0 t}\leq \de_0 \mathrm{e}^{-c_0 t}
\eeq
for $t\in [0, \bar t]$.
The above inequality contradicts \eqref{contraintro}.  Now it is not too difficult to see (using the explicit expression of $\mathcal{E}(E_{t})$) that under the $C^1$-bound of \eqref{stopping} the decay of  $\mathcal{E}(E_{t})$ obtained in 
\eqref{expdecayintro} forces $E_t$ to remain close to $F$ in a $C^1$-sense, so that assuming  $\dist_{C^1}(E_{\bar t}, F)=\e_0$ also leads to a contradiction. Thus, the stopping time $\bar t$ coincides with the maximal time and both 
\eqref{stopping} and \eqref{expdecayintro} hold for the whole lifespan of the solution. A little refinement of the estimates above allows one  also to control the H\"older-norm of the curvatures of $\pa E_t$, so that we may use the local-in-time existence theorems available for the two flows, together with a standard continuation argument, to infer that the solution exists for all times.  

Once global-in-time existence  has been established, one proceeds in the following way: A compactness argument, based on 
\eqref{stopping} and \eqref{expdecayintro}, yields the existence of a sequence $t_n\to \infty$ and of a set $F'$, critical for $J$,   such that $E_{t_n}\to F'$ (in a suitable sense). Since necessarily $F'$ is close to $F$ and of course $|F|=|F'|$, we may use the results from  \cite{AFM} (see also Proposition~\ref{prop:nocrit}) to conclude that $F'$ is a translate of $F$. The exponential convergence of the flow to 
$F'$ then  follows from \eqref{expdecayintro} via suitable elliptic estimates.

We conclude the introduction by remarking that although the presentation is restricted to the periodic case, our methods would equally work in the Neumann case, under the additional assumption that the evolving interfaces do not touch $\pa \Om$ or equivalently that $F\subset\!\subset\Om$, see Theorem~\ref{mainN}.  It would certainly be interesting to extend our result to the general Neumann setting and to arbitrary space dimensions. This will the subject of future investigations. We finally mention that  our methods would apply also to the 
{\em volume-preserving mean curvature flow} (see \cite{Hu}). However, for the sake of presentation we decided to treat only the more difficult  flows \eqref{MSintro} and \eqref{SDintro}.

The plan of the paper is the following: In Section~\ref{sec:1e2var} we introduce the precise definition of the energy functional \eqref{Jintro}, recall the formulas of the  first and the second variation and other related results that are useful for our analysis. 
In Section~\ref{sec:MS} we prove our main nonlinear stability result for the modified Mullins-Sekerka flow, while the corresponding result in the case of the surface diffusion flow is treated in Section~\ref{sec:SD}. Finally, in Section~\ref{sec:technicalproofs} we gather the proofs of several auxiliary and technical results used along the way.

\section{The nonlocal perimeter and its first and second variations}\label{sec:1e2var}

As already explained in the introduction the geometric evolutions  considered in this paper may be regarded as suitable gradient flows of (a non-local variant of) the perimeter functional. In this section  
 we introduce  such  a non-local energy and recall the first and second variation formulas, that were derived in \cite{ChSte} (see also \cite{AFM, JuPi, Mur0}).
 
 To this end,  we start by recalling that the (unit) flat torus $\T^3$ is  the quotient of  $\R^3$  with respect to 
 the equivalence relation $x\sim y \iff x-y\in \Z^3$.
The functional spaces $W^{k,p}(\T^3)$, $k\in \N$, $p\geq 1$, can be identified with the subspace of $W^{k,p}_{loc}(\R^3)$ of functions  that are one-periodic with respect to all coordinate directions. Similarly,   $C^{k,\alpha}(\T^3)$, $\alpha\in (0,1)$ may be identified with  the space of  one-periodic functions in  $C^{k,\alpha}(\R^3)$. 

A set $E\subset\T^3$ will be called of class  $W^{k,p}$, $C^k$,  or smooth if its one-periodic extension to $\R^3$ is of class $C^{k,\alpha}$,  $W^{k,p}$, or smooth. In the following we will (often) identify $E$ with such a periodic extension. 
Finally,  by saying that $E_n\to E$ in $W^{k,p}$ (or $C^{k,\alpha}$) we mean that there exists a sequence $(\Psi_n)$ of smooth diffeomorphisms from $\T^3$ to $\T^3$ such that $\Psi_n\to Id$ in $W^{k,p}$ (or $C^{k.\alpha}$) and $E_n=\Psi_n(E)$ for all $n$ sufficiently large.
When $E$ is sufficiently smooth this is equivalent  to saying that for every $\e>0$, there exists $\bar n$ such that  
\begin{multline*}
|E\Delta E_n|\leq \e\quad\text{and}\quad \pa E_n=\{x+\psi_n(x)\nu_E(x): x\in \pa E\}\,,\\
\text{ with }\|\psi_n\|_{W^{k,p}(\pa E)}\leq \e
\text{ (or  $\|\psi_n\|_{C^{k,\alpha}(\pa E)}\leq \e$) }
\end{multline*}
for all $n\geq \bar n$. Here and in the following we have used the notation $\nu_E$ to denote the outer unit normal to $E$. 
  
Given a smooth set $E\subset\T^3$, we say that a tubular neighborhood of $\pa E$ is {\em regular}, if both the {\em signed distance} function $d_E$ from the set $E$ and the orthogonal projection onto $\pa E$  are smooth functions in $U$.   Recall that 
\beq\label{etaisyys}
d_E(x) := \begin{cases}
 \text{dist}(x,\partial E) \,\, \text{if} \,\, x \not\in E, \\
  -\text{dist}(x,\partial E) \,\, \text{if} \,\, x \in E.
\end{cases}
\eeq
  
  In this periodic setting, the (relative) perimeter of a set $E\subset\T^3$ is defined as
 $$
 P_{\T^3}(E):=\sup\biggl\{\int_E \Div \vphi\, dz:\, \vphi\in C^{1}(\T^3; \R^3)\,, \|\vphi\|_\infty \leq 1 \biggr\}\,.
 $$
 Let $\gamma\geq 0$ be fixed  and for every $E\subset\T^3$ set
\beq\label{J}
J(E):= \P(E)+ \gamma \int_{\T^3} |Dv_E|^2\, dx\,,
\eeq
where  $v_E$ is the periodic  solution of 
\beq\label{eqvE}
\begin{cases}
-\Delta v_E = u_E - m,\vspace{5pt}\\
\displaystyle \int_{\T^3}v_E\, dx=0.
\end{cases}
\eeq
Here $u_E = \chi_E - \chi_{\T^3 \setminus E}$ and $m =2|E| -1$.
It is useful to recall that $v_E$ can be represented as
\beq\label{vEper}
v_E(x):=\int_{\T^3}G_{\T^3}(x, y)u_E(y)\, dy\,,
\eeq
where  $G_{\T^3}$ is the Laplacian's Green function in the torus; that is, 
for $x\in \T^3$,  $G_{\T^3}(x, \cdot)$ is the unique solution of 
$$
\begin{cases}
-\Delta_{y}G_{\T^3} (x, \cdot)=\de_{x}-1 & \text{in $\T^3$,}\\
\int_{\T^3} G_{\T^3}(x, y)\, dy=0\,.
\end{cases}
$$
We stress that the relevant particular case $\gamma=0$ (corresponding to the standard perimeter) is always included in all the discussion below. 

Throughout the paper we will make repeated use of the following notation: For any one-parameter family of functions $(g_t)_t\in (0,T)$ the symbol $\dot{g}_t$ will denote the partial derivative with respect to $s$ of the map $s\mapsto g_{t+s}$ evaluated  at $s=0$; that is, 
$$
\dot{g}_t:= \frac{\partial }{\partial s}g_{t+s}\Bigl|_{s=0}\,.
$$ 

\begin{definition}\label{def:admissibleX}
 Let $E\subset\T^{N}$ be a smooth set. 
 \begin{itemize}
 \item[(i)] We say that a one-parameter family $(\Phi_t)_{t\in I}$ of diffeomorphisms from $\T^3$ to $\T^3$, with  $I$ a real interval containing $0$, is {\em admissible}  if the map $(x, t)\mapsto\Psi_t(x)$ belongs to $C^{\infty}(\T^3\times I; \T^3)$ and  
$$
|\Phi_t(E)|=|E| \qquad\text{for all $t\in I$.}
$$
\item[(ii)]
Denote by $X_t$ the velocity field at time $t$, that is,
$$
X_t:=\dot{\Phi}_t\circ\Phi_t^{-1}
$$
and set for simplicity $X:=X_0$.   If the family $(\Phi_t)_{t\in I}$ is admissible and $X_t$ is independent of $t$, i.e., $X_t=X$, then we say that 
$(\Phi_t)_{t\in I}$ is an {\em admissible flow}.
\end{itemize}
\end{definition}
We recall that given a vector $X$, its tangential part on some smooth $(N-1)$-manifold $\mathcal{M}$  is defined as $X_\tau:=X-(X\cdot  \nu) \nu$, with $\nu$ being a unit normal vector to $\mathcal{M}$. In particular, we will denote by $D_\tau$ the tangential gradient operator given by $D_\tau\vphi:=(D\vphi)_\tau$.  Finally $\Div_\tau X$ will stand for the {\em tangential divergence} of $X$ on $\mathcal{M}$ defined as $\Div_\tau X:=\Div X-\partial_\nu X\cdot \nu$. 

\begin{theorem}[\cite{AFM, ChSte}]\label{th:12var}
 Let $E$, $(\Phi_t)_{t\in I}$, $X_t$ be as in  Definition~\ref{def:admissibleX}-(i), and set
 $$
 \dot v_E:=\frac{\pa }{\pa t}v_{\Phi_t(E)}\Bigl|_{t=0}\,,
 $$
  and $v_{\Phi_t(E)}$ is the  potential defined in \eqref{vEper}, with $E$ replaced by $\Phi_t(E)$.
 Then, 
 \beq\label{dot v}
 \dot v_E=2\int_{\pa E}G_{\T^3}(\cdot, y)X(y)\cdot \nu_E(y)\, d\Ha^{2}
 \eeq
 and
\beq\label{eq:J'}
\frac{d}{dt}J(\Phi_t(E))_{\bigl|_{t=0}}=\int_{\pa E}(H_{\pa E}+4\gamma v_E)X\cdot \nu_E\, d\Ha^{2}\,,
\eeq
where $\nu_E$ denotes the outer unit normal to $\pa E$, $H_{\pa E}$ stands for the sum of its principal curvatures, and we wrote $X$ instead of $X_0$.  
If in addition $(\Phi_t)_{t\in I}$ is an admissible flow according to Definition~\ref{def:admissibleX}-(ii), then 
\begin{align}
&\frac{d^2 }{dt^2}J(\Phi_t(E))_{\bigl|t=0} = \int_{\pa E}\Bigl(|D_\tau(X\cdot  \nu_E)|^2-|B_{\pa E}|^2(X\cdot  \nu_E)^2\Bigr)\, d\Ha^{2}\nonumber\\
& \quad+8\gamma \int_{\pa E}  \int_{\pa E}G_{\T^3}(x,y)(X\cdot  \nu_E)(x)(X\cdot  \nu_E)(y)\,d\Ha^{2}(x)\,d\Ha^{2}(y)\nonumber\\
&\quad+4\gamma\int_{\pa E}\pa_{\nu_E} v_E (X\cdot  \nu_E)^2\,d\Ha^{2}+R\,, \label{eq:J''-per}
\end{align}
where the remainder $R$ is defined as
\begin{multline}\label{Erre}
R:=-\int_{\pa E}(4\gamma v_E+H_{\pa E})\Div_{\tau}\bigl(X_\tau(X\cdot \nu_E)\bigr)\, d\Ha^{2}
\\+\int_{\pa E}(4\gamma v_E+H_{\pa E})(\Div X)(X\cdot \nu_E)\, d\Ha^{2}\,.
\end{multline}
In the above formulas $B_{\pa E}$ denotes the second fundamental form of $\pa E$ so that  the square $|B_{\pa E}|^2$  of its  Euclidean norm coincides with
  the sum of the squares of the principal curvatures. 
\end{theorem}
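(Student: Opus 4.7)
\emph{Proof plan.} I would establish the three formulas in sequence, relying on the Green's representation of $v_E$, the symmetry $G_{\T^3}(x,y)=G_{\T^3}(y,x)$, and the classical transport identities for moving hypersurfaces. Throughout, I exploit that an admissible flow has time-independent velocity $X_t\equiv X$, so that only the geometry of $E_t:=\Phi_t(E)$ evolves.

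First I would prove \eqref{dot v} by differentiating \eqref{vEper} under the integral sign. Writing $\chi_{\Phi_t(E)}(y)=\chi_E(\Phi_t^{-1}(y))$ and using the standard distributional identity $\frac{d}{dt}\chi_{\Phi_t(E)}\bigl|_{t=0}=(X\cdot\nu_E)\,d\Ha^{2}$ as a measure on $\pa E$, the formula follows by testing against $2G_{\T^3}(x,\cdot)$ (the factor $2$ coming from $u_E=2\chi_E-1$). For \eqref{eq:J'} I would split $J$ into its two summands: the derivative of the perimeter is the classical $\int_{\pa E}\Div_\tau X\,d\Ha^{2}=\int_{\pa E}H_{\pa E}\,X\cdot\nu_E\,d\Ha^{2}$, from the tangential divergence theorem on the closed manifold $\pa E$; for the nonlocal term I would rewrite $\int_{\T^3}|Dv_E|^2=\int_{\T^3}v_E u_E\,dx$ (using $-\Delta v_E=u_E-m$ and $\int v_E=0$), apply the product rule, and note that by the symmetry of $G_{\T^3}$ the two terms $\int\dot v_E\,u_E\,dx$ and $\int v_E\dot u_E\,dx$ coincide, each equalling $2\int_{\pa E}v_E\,X\cdot\nu_E\,d\Ha^{2}$, which produces the factor $4\gamma$.

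For \eqref{eq:J''-per} I would differentiate the first variation once more. The main tools are: (a) the moving-surface transport formula, which converts $\frac{d}{dt}\int_{\pa E_t}f_t\,d\Ha^{2}$ into a sum of a time-derivative term, a tangential-divergence term $\Div_\tau(f_0 X_\tau)$, and a curvature correction $f_0 H_{\pa E}X\cdot\nu_E$; (b) Jacobi's identity $\dot H_{\pa E}=-\Delta_\tau(X\cdot\nu_E)-|B_{\pa E}|^2(X\cdot\nu_E)+X_\tau\cdot D_\tau H_{\pa E}$; and (c) the chain rule $\frac{d}{dt}\bigl[v_{E_t}(\Phi_t(x))\bigr]\bigl|_{t=0}=\dot v_E(x)+Dv_E(x)\cdot X(x)$, with $\dot v_E$ given by \eqref{dot v}. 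Integrating $-\Delta_\tau(X\cdot\nu_E)$ against $X\cdot\nu_E$ on $\pa E$ produces the Dirichlet term $\int|D_\tau(X\cdot\nu_E)|^2\,d\Ha^{2}$; the $|B_{\pa E}|^2(X\cdot\nu_E)^2$ term appears directly; the contribution of $\dot v_E$ integrated against $X\cdot\nu_E$ yields, via \eqref{dot v} and Fubini, the nonlocal Green's-function double integral; and the normal component $\pa_{\nu_E}v_E\,(X\cdot\nu_E)$ of $Dv_E\cdot X$ in (c) produces the boundary term $4\gamma\int_{\pa E}\pa_{\nu_E}v_E(X\cdot\nu_E)^2\,d\Ha^{2}$.

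The main obstacle is the final bookkeeping that reorganizes all remaining contributions involving the tangential part $X_\tau$ -- coming from $X_\tau\cdot D_\tau H_{\pa E}$ in (b), from $\Div_\tau(f_0 X_\tau)$ in (a) applied to $f_0=H_{\pa E}+4\gamma v_E$, from $D_\tau v_E\cdot X_\tau$ in (c), and from the curvature correction in (a) -- into precisely the two terms of the remainder $R$ in \eqref{Erre}. This requires one further integration by parts on $\pa E$ together with the pointwise decomposition $\Div X=\Div_\tau X+\pa_{\nu_E}X\cdot\nu_E$ on $\pa E$. As a consistency check, on a critical set the factor $H_{\pa E}+4\gamma v_E$ is constant, so $R$ reduces to a multiple of $\int_{\pa E}\bigl((\Div X)(X\cdot\nu_E)-\Div_\tau(X_\tau(X\cdot\nu_E))\bigr)\,d\Ha^{2}$, which vanishes upon differentiating the constraint $|\Phi_t(E)|=|E|$ twice at $t=0$; this confirms that on critical sets the second variation depends only on $X\cdot\nu_E$, consistently with \eqref{J2intro}.
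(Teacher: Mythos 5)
The paper does not prove Theorem~\ref{th:12var}; it is quoted from \cite{ChSte} and \cite{AFM}, so there is no internal proof to compare against. Your proof plan, however, is sound and reproduces the standard derivation from those references: the distributional differentiation of $u_{\Phi_t(E)}$ giving \eqref{dot v}, the split of $J$ and the $\int|Dv_E|^2=\int v_Eu_E$ rewriting with Green-function symmetry for \eqref{eq:J'}, and the transport theorem plus Jacobi's identity for $\dot H$ plus the chain rule for the potential for \eqref{eq:J''-per}. You correctly isolate the real work as the bookkeeping of all $X_\tau$ contributions into the two terms of $R$, and your final consistency check is accurate: on a critical set $H_{\pa E}+4\gamma v_E=\lambda$ is constant, the first term of $R$ vanishes by the tangential divergence theorem, and the second vanishes because $0=\frac{d^2}{dt^2}|\Phi_t(E)|\big|_{t=0}=\int_E\Div\bigl((\Div X)X\bigr)\,dx=\int_{\pa E}(\Div X)(X\cdot\nu_E)\,d\Ha^2$ for an admissible flow with time-independent $X$. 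Two small points worth making explicit in a written-out version: (a) \eqref{dot v} tacitly relies on the normalization $\int_{\T^3}G_{\T^3}(x,y)\,dy=0$ so that both $v_{\Phi_t(E)}$ and $\dot v_E$ have zero mean and no extra average-adjustment term arises; (b) in step (c) the material derivative of $\nu_t$ along $\Phi_t$ is $\dot\nu=-D_\tau(X\cdot\nu_E)-B_{\pa E}[X_\tau]$ in general, and the tangential piece $-B_{\pa E}[X_\tau]$ is one of the contributions that must be absorbed into $R$; your outline mentions the sources of $X_\tau$ terms but not this one.
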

Recall now that if $\Phi_t$ is admissible, then $|\Phi_t(E)|=|E|$ for all $t\in [0,1]$ and thus
$$
0=\frac{d}{dt}|\Phi_t(E)|_{\bigl|_{t=0}}=\int_E \frac{d}{dt}J\Phi_{\bigl|_{t=0}}=\int_E\Div X\, dx=\int_{\pa E} X\cdot \nu_E\, d\Ha^{2}\,,
$$
that is, the normal component $X\cdot  \nu_E$ has zero average on $\pa E$. Then \eqref{eq:J'} together with a simple approximation argument (see \cite[Corollary~3.4]{AFM}) implies that 
$$
\frac{d}{dt}J(\Phi_t(E))_{\bigl|_{t=0}}=0\qquad\text{for all admissible } \Phi_t
$$
if and only if
$$
\int_{\pa E}(H_{\pa E}+4\gamma v_E)\vphi\, d\Ha^{2}=0\quad\text{for all $\vphi\in 
 C^\infty(\pa E)$ s.t. $\int_{\pa E}\vphi\, d\Ha^{2}=0$.}
$$
This motivates the following definition.
\begin{definition}[Critical sets]\label{def:criticality}
A smooth subset $F\subset\T^3$ is said to be \emph{critical} for the functional $J$ if  there exists a constant $\lambda\in \R$ such that 
$$
H_{\pa F}+4\gamma v_{F}=\lambda\qquad \text{on $\pa F$.}
$$
\end{definition}
It is now easy to see that for critical sets the remainder \eqref{Erre} vanishes so that the second variation depends (quadratically) only on $X\cdot  \nu_F$. 
Denoting 
$$
\widetilde H(\pa F):=\biggl\{\vphi\in H^1(\pa F):\, \int_{\pa F}\vphi\, d\Ha^2=0\biggr\},
$$
  we are led to consider the quadratic form $\partial^2 J(F):\widetilde H(\pa F)\to \R$ defined as 
\beq\label{J2}
\begin{split}
\partial^2 J(F)[\vphi] := &\int_{\partial F} |D_\tau \vphi|^2\, d \Ha^{2} - \int_{\partial F} |B_{\pa F}|^2  \vphi^2  \, d \Ha^{2} \\
&+ 8\gamma   \int_{\partial F}\int_{\partial F} G_{\T^3}(x,y)\vphi(x) \vphi(y) \, d \Ha^{2}(x)d \Ha^{2}(y) \\
&+ 4\gamma  \int_{\partial F} \pa_{\nu_F} v_F\, \vphi^2  \, d \Ha^{2}\,,
\end{split}
\eeq
so that if $F$ is critical, then 
$$
\frac{d^2 }{dt^2}J(\Phi_t(F))_{\bigl|t=0}=\partial^2 J(F)[X\cdot\nu_F],
$$
thanks to \eqref{eq:J''-per}. In order to give the proper notion of stability  we have to take into account that 
the functional $J$ is  invariant under translations of sets. Thus, if one consider the (admissible) flow $\Phi(t, x)=x+t\,\eta$, $\eta\in \R^3$, then $\Phi_t(F)=F+t\eta$
and  $J(\Phi_t(F))=J(F)$ for all $t$. Therefore, 
$$
0=\frac{d^2}{dt^2}J(\Phi_t(F))_{\bigl|_{t=0}}=\pa^2J(F)[\eta\cdot \nu_F]\qquad \text{for all $\eta\in \R^3$.}
$$ 
We conclude that the quadratic form $\pa^2J(F)$ always vanishes on the finite dimensional subspace  $T(\pa F)\subset\widetilde H(\pa F)$ defined as
$$
{T}(\pa F):=\bigl\{\,\eta\cdot  \nu_F:\, \eta\in \R^3\bigr\}\,.
$$
The above observation motivates the following definition.
\begin{definition}\label{def:stability+}
Let $F\subset\T^3$ be a smooth critical set, according to Definition~\ref{def:criticality}. We say that $F$ is \emph{strictly stable} if 
$$
\partial^2 J(F)[\vphi]>0\qquad\text{for all }\vphi\in T^\perp(\pa F)\setminus\{0\}.
$$
\end{definition}
Let $F$ be a smooth critical set. Observe that we may choose an orthogonal base $\{\tilde e_1, \tilde e_2, \tilde e_3\}$ of $\R^3$ such that 
  the functions $\tilde e_i \cdot \nu_F$,  $i=1,2,3$,  are orthogonal in $L^2(\partial F)$ (see \cite[Section 3]{AFM}).   Then we set
\beq\label{projektio}
\Pi_F := \text{span} \{ \tilde e_i \, : \, i \in I_F\}, 
\eeq
where 
\beq\label{indeksit}
I_F:= \{i \, : \,  \tilde e_i \cdot \nu_F \,\,  \text{ is not identically zero} \}. 
\eeq

\begin{remark}\label{rm:potential}
Setting for $\vphi\in \Ht(\pa E)$ 
\[
v_\vphi(x):=\int_{\pa E} G_{\T^3}(x, y)\vphi(y)\, d\Ha^{2}(y)
\]
and $\mu_\vphi:=\vphi\, \Ha^{2}\rtangle \pa E$,  
it follows from the properties of the Green's function (see  \cite[Chapter 18]{Landkof})  that $v_\vphi$ satisfies 
$-\Delta v_{\vphi}=\mu_\vphi$ in $\T^3$
or, equivalently,
\beq\label{wvphi}
\int_{\T^3} D v_\vphi\cdot D \psi dx=\int_{\pa E}\vphi\,\psi\, d\Ha^{2} \qquad\text{for all $\psi\in H^1(\T^3)$.}
\eeq
Therefore, 
$$
\int_{\pa E}  \int_{\pa E}G_{\T^3}(x,y)\vphi(x)\vphi(y)\,d\Ha^{2}(x)\,d\Ha^{2}(y)=
\int_{\pa E}\vphi\, v_{\vphi}\, d\Ha^{2}=\int_{\T^3}|D v_{\vphi}|^2\, dx\,,
$$
where the last equality follows from \eqref{wvphi}.
\end{remark}

In \cite[Theorem~1.1]{AFM} (see also \cite{JuPi} for the case of Neumann boundary conditions) it is shown that strictly stable critical sets are in fact \emph{isolated local minimizers} of the functional $J$ with respect to small $L^1$-perturbations.  
 It is the main purpose of this paper to show that the latter (static) stability property extends to the evolutionary case, by proving that in fact critical  configurations with positive definite second variation are asymptotically stable for suitable gradient flows of the functional $J$.
 
We conclude this section by stating two facts that will be used throughout.

The first lemma states that when a set is sufficiently close to a strictly stable critical point then the quadratic form associated with the second variation remains positive. More precisely, we have:
\begin{lemma} \label{from AFM}
Fix $p>2$ and let $F$ be a smooth strictly stable critical set in the sense of Definition~\ref{def:stability+}. Then, for every $\e\in (0,1]$ there exist  $\sigma_\e>0$ and  $\delta_1>0$  such that  
\begin{equation}\label{from AFM1}
\partial^2 J(E)[\vphi]\geq \sigma_\e\| \vphi\|_{H^1(\pa E)}^2
\end{equation}
 for all $\vphi\in \widetilde H(\pa E)$ satisfying
 $$
\min_{\eta\in \Pi_F}\|\vphi-\eta \cdot\nu_E\|_{L^2(\pa E)}\geq \e\|\vphi\|_{L^2(\pa E)}, 
$$
provided that $E\subset\T^3$ is $\de_1$-close to $F$ in a $W^{2,p}$-sense, that is 
$$
\pa E=\{x+\psi(x) \nu_F(x):\, x\in \pa F \text{ for some smooth $\psi$ with $\|\psi\|_{W^{2,p}(\pa F)}\leq \de_1$}\}.  
$$
\end{lemma}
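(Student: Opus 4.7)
My strategy is to reduce the claim for $E$ to the corresponding claim on $F$ by a contradiction/compactness argument, exploiting that the second-variation quadratic form depends continuously on the underlying set in the $W^{2,p}$-topology.

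\emph{Step 1 (base inequality on $F$ itself).} I would first verify the conclusion for $E=F$. Decompose $\vphi\in\Ht(\pa F)$ as $\vphi=\vphi_T+\vphi^\perp$ with $\vphi_T\in T(\pa F)$ and $\vphi^\perp\in T^\perp(\pa F)$ in the $L^2(\pa F)$-sense. The constraint $\min_{\eta\in\Pi_F}\|\vphi-\eta\cdot\nu_F\|_{L^2}\geq\e\|\vphi\|_{L^2}$ translates to $\|\vphi^\perp\|_{L^2}\geq \e\|\vphi\|_{L^2}$. Since strictly stable critical sets are local minimizers (Theorem~1.1 of \cite{AFM}), $\pa^2 J(F)\geq 0$ on $\Ht(\pa F)$; a standard polarization then forces $T(\pa F)$ to be isotropic for the associated bilinear form, so $\pa^2 J(F)[\vphi]=\pa^2 J(F)[\vphi^\perp]$. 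Strict stability together with a spectral/compactness argument (the form is the tangential Dirichlet energy plus a compact perturbation) yields $\pa^2 J(F)[\vphi^\perp]\geq c_0\|\vphi^\perp\|_{H^1(\pa F)}^2$. Finite-dimensionality of $T(\pa F)$ gives $\|\vphi_T\|_{H^1}\leq C\|\vphi_T\|_{L^2}$, which combined with the distance condition produces $\|\vphi\|_{H^1}\leq C_\e\|\vphi^\perp\|_{H^1}$ and hence the desired estimate on $F$.

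\emph{Step 2 (contradiction setup).} Fix $\e$ and suppose the lemma fails: there are $E_n\to F$ in $W^{2,p}$ via diffeomorphisms $\Psi_n\to\mathrm{Id}$, and $\vphi_n\in\Ht(\pa E_n)$ with $\|\vphi_n\|_{H^1(\pa E_n)}=1$ and $\min_{\eta\in\Pi_F}\|\vphi_n-\eta\cdot\nu_{E_n}\|_{L^2(\pa E_n)}\geq \e\|\vphi_n\|_{L^2(\pa E_n)}$, but $\pa^2 J(E_n)[\vphi_n]\to 0$. Pull back to $\pa F$ by setting $\tilde\vphi_n:=\vphi_n\circ\Psi_n$ (with a harmless constant correction to enforce zero average). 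The $\tilde\vphi_n$ are bounded in $H^1(\pa F)$, so up to subsequences $\tilde\vphi_n\wto\tilde\vphi$ in $H^1$ and $\tilde\vphi_n\to\tilde\vphi$ in $L^2$.

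\emph{Step 3 (continuity of the quadratic form and of the constraint).} Show that $\pa^2 J(E_n)[\vphi_n]-\pa^2 J(F)[\tilde\vphi_n]\to 0$. Since $p>2$ in dimension three, $W^{2,p}\hookrightarrow C^{1,\alpha}$, so $\nu_{E_n}\to\nu_F$ uniformly, $B_{\pa E_n}\to B_{\pa F}$ in $L^p$, and, by elliptic regularity, $v_{E_n}\to v_F$ in $C^{1,\alpha}$; the tangential gradient and the Green-function double integral transfer up to Jacobian/metric factors converging to $1$ in $W^{1,p}$. Likewise, uniform closeness of $\nu_{E_n}$ to $\nu_F$ lets me pass the distance constraint to the limit: $\min_{\eta\in\Pi_F}\|\tilde\vphi-\eta\cdot\nu_F\|_{L^2(\pa F)}\geq\e'\|\tilde\vphi\|_{L^2(\pa F)}$ for a possibly slightly smaller $\e'>0$.

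\emph{Step 4 (dichotomy and contradiction).} By weak $H^1$-lower-semicontinuity of the Dirichlet term and strong $L^2$-continuity of the lower-order and nonlocal terms,
\beq
\pa^2 J(F)[\tilde\vphi]\leq \liminf_n\pa^2 J(F)[\tilde\vphi_n]=\liminf_n\pa^2 J(E_n)[\vphi_n]=0.
\eeq
If $\tilde\vphi\neq 0$, Step~1 applied on $\pa F$ with the transferred constraint yields $\pa^2 J(F)[\tilde\vphi]\geq \sigma_{\e'}\|\tilde\vphi\|_{H^1}^2>0$, a contradiction. If instead $\tilde\vphi=0$, then $\tilde\vphi_n\to 0$ in $L^2$, so all lower-order terms in $\pa^2 J(F)[\tilde\vphi_n]$ vanish in the limit; this forces $\int_{\pa F}|D_\tau\tilde\vphi_n|^2\to 0$, hence $\tilde\vphi_n\to 0$ in $H^1(\pa F)$, contradicting the normalization $\|\vphi_n\|_{H^1(\pa E_n)}=1$.

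\emph{Main obstacle.} The delicate point is Step~3: verifying the quantitative $W^{2,p}$-continuity of every summand of $\pa^2 J$ (in particular the nonlocal term, whose kernel $G_{\T^3}$ has a logarithmic singularity that must be controlled under the change of surface), and doing the book-keeping so that the pullback does not break weak $H^1$-convergence of $\tilde\vphi_n$. Transferring the distance-$\e$ condition through the diffeomorphism also requires care but is easier, since it only needs uniform convergence of the normals.
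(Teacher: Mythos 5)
Your proof is correct and follows the same overall compactness/contradiction strategy as the paper: establish a base coercivity on $F$, then transfer it to nearby $E$ via a $W^{2,p}$-continuity argument for $\pa^2 J$. The one place where you take a genuinely different route is the isotropy claim $\pa^2 J(F)[\vphi]=\pa^2 J(F)[\vphi^\perp]$. You obtain this from nonnegativity of $\pa^2 J(F)$ (imported from the local-minimality theorem \cite[Thm.~1.1]{AFM}) together with a Cauchy--Schwarz argument for nonnegative bilinear forms. The paper instead derives it more elementarily and self-containedly from the identity $L(\nu_i)=0$ for $\nu_i=e_i\cdot\nu_F$, where
\[
L(\vphi) := -\Delta_\tau \vphi - |B_{\partial F}|^2\vphi  + 4\gamma\, \partial_\nu v_F\, \vphi + 8\gamma \int_{\partial F}  G_{\T^3}(\cdot,y) \vphi(y) \, d \Ha^2(y),
\]
which holds because translations are Jacobi fields of the critical point; combined with self-adjointness of $L$ this kills the cross terms without invoking local minimality. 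Both are valid, but the paper's derivation does not lean on a theorem that is itself proved via coercivity estimates of this very type. A second, more cosmetic, difference is in the contradiction step: you extract the weak limit $\tilde\vphi$ and split on $\tilde\vphi=0$ vs.\ $\tilde\vphi\neq0$, whereas the paper splits on $\lim\|\vphi_h\|_{L^2(\pa E_h)}=0$ vs.\ $>0$ before pulling back, avoiding the weak lower-semicontinuity step and applying the quantitative bound $m_{\e/2}$ directly to $\tilde\vphi_h$ along the sequence. Either variant works; both hinge on the same quadratic-form continuity that you correctly flag as the key technical burden (including reducing the nonlocal term's convergence to $\mu_h-\tilde\mu_h\to 0$ in $H^{-1}(\T^3)$).
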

The proof of the above lemma is  given in  Section~\ref{sec:technicalproofs}.

The final result of this section states the crucial observation that in the vicinity of a given strictly stable critical set there are no other critical sets.

\begin{proposition}\label{prop:nocrit}
Let $p$ and $F$ be as in Lemma~\ref{from AFM}. Then there exists $\de_2>0$ such that if $F'\subset\T^3$ is a smooth critical set in the sense of Definition~\ref{def:criticality}, $|F'|=|F|$, $|F\Delta F'|\leq\delta_2$ and
$$
\pa F'=\{x+\psi(x) \nu_F(x):\, x\in \pa F \text{ for some smooth $\psi$ with $\|\psi\|_{W^{2,p}(\pa F)}\leq \de_2$}\},  
$$
then $F'=F+\sigma$ for some $\sigma\in \R^3$. 
\end{proposition}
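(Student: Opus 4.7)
I would argue by contradiction: suppose there exist smooth critical sets $F'_n\subset \T^3$ with $|F'_n|=|F|$, $|F\Delta F'_n|\to 0$, and $\partial F'_n=\{x+\psi_n(x)\nu_F(x):\,x\in\partial F\}$ with $\|\psi_n\|_{W^{2,p}(\partial F)}\to 0$, such that $F'_n$ is not a translate of $F$. The first move is to quotient out the translation invariance of $J$: for each large $n$ I would apply the implicit function theorem to find $\sigma_n\in \Pi_F$, $\sigma_n\to 0$, so that the function $\tilde\psi_n\in W^{2,p}(\partial F)$ parametrizing $\partial(F'_n-\sigma_n)$ as a normal graph over $\partial F$ satisfies
\[
\int_{\partial F}\tilde\psi_n\,(\tilde e_i\cdot\nu_F)\,d\Ha^{2}=0\qquad\text{for every }i\in I_F.
\]
The linearization of this system with respect to $\sigma\in\Pi_F$ is, up to sign, the Gram matrix of the $L^2$-orthogonal family $\{\tilde e_i\cdot\nu_F\}_{i\in I_F}$, which is invertible by \eqref{indeksit}. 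Since translations in $\Pi_F^\perp$ leave $F$ fixed, the assumption that $F'_n$ is not a translate of $F$ forces $\tilde\psi_n\not\equiv 0$ for all large $n$, and I renormalize $\varphi_n:=\tilde\psi_n/\|\tilde\psi_n\|_{W^{2,p}(\partial F)}$, so that $\|\varphi_n\|_{W^{2,p}}=1$.

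The next step is to linearize the criticality equation $H_{\partial(F'_n-\sigma_n)}+4\gamma v_{F'_n-\sigma_n}=\lambda_n$. Pulling it back to $\partial F$ via $T_n(x):=x+\tilde\psi_n(x)\nu_F(x)$ and Taylor expanding both terms gives
\[
L\tilde\psi_n=c_n+R_n\qquad\text{in }L^p(\partial F),
\]
where $c_n:=\lambda_n-\lambda_F$ is a constant, $\|R_n\|_{L^p(\partial F)}\leq C\|\tilde\psi_n\|_{W^{2,p}}^{2}$, and $L$ is the Jacobi operator associated with $\pa^2 J(F)$,
\[
L\vphi=-\Delta_\tau \vphi-|B_{\partial F}|^2\vphi+4\gamma(\pa_{\nu_F}v_F)\,\vphi+8\gamma\, v_\vphi|_{\partial F},
\]
with $v_\vphi$ as in Remark~\ref{rm:potential}; indeed $\pa^2J(F)[\vphi]=\int_{\partial F}\vphi\,L\vphi\,d\Ha^{2}$ for every $\vphi\in\Ht(\partial F)$. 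Dividing by $\|\tilde\psi_n\|_{W^{2,p}}$ yields $L\varphi_n=\tilde c_n+o(1)$ in $L^p(\partial F)$, with $\tilde c_n:=c_n/\|\tilde\psi_n\|_{W^{2,p}}$. Because $\vphi\mapsto v_\vphi|_{\partial F}$ is compact $L^p\to L^p$ while $-\Delta_\tau$ is elliptic on the compact manifold $\partial F$, Calder\'on--Zygmund theory yields
\[
\|\varphi_n\|_{W^{2,p}(\partial F)}\leq C\bigl(\|L\varphi_n\|_{L^p(\partial F)}+\|\varphi_n\|_{L^p(\partial F)}\bigr),
\]
and, combined with $\|\varphi_n\|_{W^{2,p}}=1$, this forces $\tilde c_n$ to remain bounded. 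Up to a subsequence $\tilde c_n\to\tilde c_\infty$, and by the compact embedding $W^{2,p}(\partial F)\hookrightarrow C^{1,\alpha}(\partial F)$ (valid since $p>2$), $\varphi_n\to\varphi_\infty$ in $C^{1,\alpha}$ and weakly in $W^{2,p}$, with $L\varphi_\infty=\tilde c_\infty$.

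The final step invokes strict stability. The volume constraint $|F'_n-\sigma_n|=|F|$ expands as $\int_{\partial F}\tilde\psi_n\,d\Ha^{2}=O(\|\tilde\psi_n\|_{W^{2,p}}^{2})$, hence $\int_{\partial F}\varphi_\infty\,d\Ha^{2}=0$ and $\varphi_\infty\in\Ht(\partial F)$; the orthogonality imposed in the first step passes to the limit, so $\varphi_\infty\in T^\perp(\partial F)$. Testing the limiting equation against $\varphi_\infty$,
\[
\pa^2 J(F)[\varphi_\infty]=\int_{\partial F}\varphi_\infty\, L\varphi_\infty\,d\Ha^{2}=\tilde c_\infty\int_{\partial F}\varphi_\infty\,d\Ha^{2}=0,
\]
so Definition~\ref{def:stability+} forces $\varphi_\infty=0$, and then $\tilde c_\infty=0$ as well. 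Reapplying the Calder\'on--Zygmund estimate with $\|L\varphi_n\|_{L^p}=|\tilde c_n|\,|\partial F|^{1/p}+o(1)\to 0$ and $\|\varphi_n\|_{L^p}\to 0$ gives $\|\varphi_n\|_{W^{2,p}}\to 0$, contradicting $\|\varphi_n\|_{W^{2,p}}=1$.

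The technical point I expect to be most delicate is the $L^p$-quadratic Taylor expansion leading to $L\tilde\psi_n=c_n+R_n$, particularly for the nonlocal term: via a careful change of variables combined with the representation \eqref{vEper} one must verify $v_{F'_n-\sigma_n}\circ T_n=v_F+(\pa_{\nu_F}v_F)\tilde\psi_n+2v_{\tilde\psi_n}+O(\|\tilde\psi_n\|_{W^{2,p}}^{2})$ on $\partial F$ (the factor $2$ coming from $u_E=2\chi_E-1$), along with the analogous expansion for $H_{\partial(F'_n-\sigma_n)}\circ T_n$. Once both expansions are in place, the compactness and contradiction machinery above closes the argument.
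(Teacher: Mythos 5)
Your proof is correct in its essentials but takes a genuinely different route from the paper's. The paper disposes of this proposition in a few lines by invoking a quantitative convexity result from \cite[Proof of Theorem~3.9]{AFM}: for $F'$ close to $F$ one can find a translation $\sigma$ and an admissible path $\Phi_t$ from $F$ to $F'+\sigma$ along which $\frac{d^2}{dt^2}J(\Phi_t(F))\geq c|F\Delta(F'+\sigma)|^2>0$ uniformly, so integrating twice shows the first variation at $t=1$ cannot vanish, i.e.\ $F'+\sigma$ is not critical. This is short precisely because all the hard analytic work (building the path, proving uniform positivity of $\partial^2 J$ along it) was already done in \cite{AFM}. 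Your argument is instead the classical ``linearize the Euler--Lagrange equation and exploit Fredholm + compactness'' proof of local uniqueness of critical points: blow up the perturbation, pass to a limit satisfying the Jacobi equation $L\varphi_\infty=\tilde c_\infty$, and kill it by strict stability. This is self-contained, makes the role of the Jacobi operator and of the translation gauge-fixing explicit, and would generalize immediately to settings where the convexity-along-paths result from \cite{AFM} is unavailable; the trade-off is that it requires the $L^p$-quadratic Taylor expansion of $H+4\gamma v$ under normal graphs, which you rightly flag as the technical crux.

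One step in your write-up is mis-justified, though the conclusion is correct and easily repaired: the boundedness of $\tilde c_n$ does \emph{not} follow from the Calder\'on--Zygmund estimate combined with $\|\varphi_n\|_{W^{2,p}}=1$. Since $\|\varphi_n\|_{L^p}$ can be of order one, the inequality $1\leq C(\|L\varphi_n\|_{L^p}+\|\varphi_n\|_{L^p})$ is vacuous and says nothing about $\tilde c_n$. The correct way to bound $\tilde c_n$ is to project the equation $L\varphi_n=\tilde c_n+o(1)$ onto constants, i.e.\ integrate over $\partial F$: the Laplace--Beltrami term drops out, and the remaining lower-order terms $-|B_{\partial F}|^2\varphi_n+4\gamma(\partial_{\nu_F}v_F)\varphi_n+8\gamma v_{\varphi_n}$ are controlled in $L^1(\partial F)$ by $\|\varphi_n\|_{L^p}\leq C$, giving $|\tilde c_n|\,|\partial F|\leq C$. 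With that fix, and once the quadratic expansion is verified in $L^p$ (standard but worth carrying out, especially for the nonlocal piece), the rest of your argument is sound.
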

\begin{proof}
This fact is essentially proven in \cite[Proof of Theorem~3.9]{AFM}. There, it is shown that for every $p>2$ there exists $\de_2>0$ with the following property: 
if $F'\subset\T^3$ is a smooth  set with $|F'|=|F|$, $|F\Delta F'|\leq\delta_2$ and 
$$
\pa F'=\{x+\psi(x) \nu_F(x):\, x\in \pa F \text{ for some smooth $\psi$ with $\|\psi\|_{W^{2,p}(\pa F)}\leq \de_2$}\},  
$$
then we may find a small vector $\sigma\in T^3$  and an admissible flow $\Phi_t$ such that $\Phi_0(F)=(F)$, $\Phi_1(F)=F'+\sigma$ and 
$$
\frac{d^2 }{dt^2}J(\Phi_t(F))_{\bigl|t=s} \geq c |E \Delta (F'+\sigma)|^2
$$
for all $s\in [0,1]$, where $c$ is a positive constant independent of $F'$. Assume that $F'$ is a smooth critical set which is not translate of $F$. 
Then  $\frac{d }{dt}J(\Phi_t(F))_{\bigl|t=0}=0$ and from the above formula we have that  
$\frac{d}{dt}J(\Phi_t(F))_{\bigl|t=1}>0$. Therefore  $F'+\sigma$ and, in turn $F'$, is not critical. 
\end{proof}

\section{Nonlinear stability  for the modified Mullins-Sekerka  flow}\label{sec:MS}

In this section we consider the modified Mullins-Sekerka  flow.  In order to speak about  classical solutions, we need to define first  the notion of a smooth flow. 
\begin{definition}[Smooth flows of sets]\label{def:smoothflow}
We say that a one-parameter family of  sets $(E_t)_{t\in (0, T)}$ is a {\em smooth flow} on the interval $(0, T)$ 
if there exists a smooth {\em reference set} $F\subset\T^3$ and a map $\Psi\in C^{\infty}(\T^3\times (0, T); \T^3)$ such that $\Psi_t:=\Psi(\cdot, t)$ is a smooth diffeomorphism from $\T^3$ into $\T^3$  and  $E_t=\Psi_t(F)$ for all $t\in [0, T)$.
\end{definition}

We will make use of the following notation: Given a (smooth) set $E\subset\T^3$, we denote by $w_E$ the unique solution in $H^1(\T^3)$ to the following problem
\beq\label{WE}
\begin{cases}
\Delta w_E=0 & \text{in }\T^3\setminus \pa E\\
w_E= H_{\pa E} + 4\gamma v_{E}  & \text{on } \, \partial E,
\end{cases}
\eeq
 where $v_E$ is the potential introduced in \eqref{eqvE}.  Moreover, we denote by $w^+_E$ and $w^-_E$ the restrictions $w_E|_{\T^3\setminus E}$ and ${w_E}{|_{E}}$, respectively. Finally, denoting as usual by $\nu_E$ the outer unit normal to $E$, we set
 $$
 [\pa_{\nu_E} w_E]:=\pa_{\nu_E}w^+_E-\pa_{\nu_E}w^-_E=-(\pa_{\nu_{E^c}}w^+_E+\pa_{\nu_E}w^-_E)\,.
 $$
 In the following, given $\alpha\in (0,1)$ and $k, m\in \N$ we denote
 $$
 h^{k,\alpha}(\R^m):=\{f\in C^{k,\alpha}(\R^m):\, \exists \{f_n\}\subset C^{\infty}(\R^m)\text{ s.t. }
 f_n\to f\text{ locally in }C^{k,\alpha}(\R^m)\}\,.
 $$
 The space $h^{k,\alpha}(M)$, when $M\subset \R^m$ is a smooth manifold can be then defined by means of local charts. 
 In turn,  we will say that a set $F\subset\T^3$ is of class $h^{k,\alpha}$, $\alpha\in (0,1)$, if for each point $x\in\pa F$ there exists a a neighborhood $V$ of $x$,  a function $f\in h^{k,\alpha}(\R^{2})$, and a suitable coordinate system such that
 $F\cap V=\{(x', x_N)\in V:\, x_N\leq f(x')\}$. 
 \begin{definition}[Nonlocal Mullins-Sekerka flows]\label{def:OKsol}
 Let $E_0\subset\T^3$ be of class $h^{2,\alpha}$ for some $\alpha\in (0,1)$. We say that  the one-parameter family $(E_t)_{t\in (0, T)}$ is a  {\em classical solution} to the modified Mullins-Sekerka  flow on the interval $(0, T)$ with initial datum $E_0$ if it is a smooth flow in the sense of  Definition~\ref{def:smoothflow}, $E_t\to E_0$ in $C^{2,\alpha}$ as $t\to 0^+$, and  the following evolution law holds:
\begin{equation}\label{MSnl}
V_t= [\partial_{\nu_t} w_{t}] \quad\text{on } \, \partial E_t\text{ for all }t\in (0, T)\,,
\end{equation}
where $V_t$ stands for the outer normal velocity of the moving boundary $\pa E_t$. Here  we used the simplified notation 
$\partial_{\nu_t} w_{t}$ in place  of $\partial_{\nu_{E_t}} w_{E_t}$.  
\end{definition}
As explained in the introduction the modified Mullins-Sekerka flow is volume preserving. This can be easily checked by the following computation (using also the notation introduced in Definition~\ref{def:OKsol}):
$$
\frac{d}{dt}|E_t|=\int_{\pa E_t}V_t\, d\Ha^{2}=\int_{\pa E_t}[\pa_{\nu_t} w_t]\, d\Ha^{2}=0\,, 
$$
where the last equality follows from the Divergence Theorem and the fact that $w_t$ is harmonic in $\T^3\setminus \pa E_t$. 

We use the following notation: Given a smooth set $F\subset\T^3$ and a  regular tubular neighborhood $U$ of $\pa F$,  we denote by $\mathfrak{C}^1_M(F, U)$, $M>0$,
the class of all smooth sets $E\subset F\cup U$ such that  
\beq\label{front}
\pa E=\{x+\psi_E(x)\nu_{F}(x):\, x\in \pa F \}\,,
\eeq
for some $\psi_E\in C^\infty(\pa F)$, with $\|\psi\|_{C^1(\pa F)}\leq M$. For $\alpha\in (0,1)$ and $k\in\N$ we also let $\mathfrak{h}^{k,\alpha}_M(F, U)$ be the collection of sets $E\in \mathfrak{C}^1_M(F, U)$ such that $\|\psi\|_{h^{k,\alpha}(\pa F)}\leq M$.
We are now ready to state a local-in-time existence and uniqueness result proved in \cite{EscNis}. \footnote{In fact \cite{EscNis} deals with the evolution in the whole space $\R^N$, but it is clear that the same arguments go through in the periodic case.}

\begin{theorem}[Local-in-time existence and uniqueness, \cite{EscNis}]\label{th:EscNis}
 Let $F_0\subset\T^3$ be a smooth set and   $U$ a regular tubular neighborhood of $\pa F_0$.  Then, for every  $M>0$ and $\alpha\in (0,1)$ there exists $T>0$ with the following property:  For every $E_0\in \mathfrak{h}^{2,\alpha}_M(F_0, U)$
there exists a unique classical solution  to the modified Mullins-Sekerka  flow in $(0, T)$  with initial datum $E_0$.
 \end{theorem}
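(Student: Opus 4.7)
The proof follows the classical framework for quasilinear parabolic free-boundary problems, combining a parametric reduction, pseudodifferential analysis of the principal part, and maximal regularity in little Hölder scales; this is essentially the strategy carried out in \cite{EscNis}. Using the regular tubular neighborhood $U$, I parametrize every admissible set $E\in \mathfrak{h}^{2,\alpha}_M(F_0,U)$ by a height function $\psi\in h^{2,\alpha}(\pa F_0)$ via $\pa E=\{x+\psi(x)\nu_{F_0}(x):x\in\pa F_0\}$. The outer normal velocity is $V_t=\beta(\psi)\dot\psi$ for a smooth positive prefactor $\beta(\psi)$ depending on $\psi$ and $\nabla_\tau\psi$, and with this change of variables \eqref{MSnl} becomes the scalar nonlocal equation
\[
\dot\psi\;=\;\beta(\psi)\,\mathcal{N}(\psi)\bigl[\mathcal{H}(\psi)+4\gamma\mathcal{V}(\psi)\bigr],
\]
where $\mathcal{H}(\psi)$ is the mean curvature of $\pa E$ pulled back to $\pa F_0$, $\mathcal{V}(\psi)$ is the trace on the moving interface of the potential defined by \eqref{eqvE}, and $\mathcal{N}(\psi)$ is the jump of the two-sided Dirichlet-to-Neumann operator associated with the transmission problem \eqref{WE}.

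\textbf{Principal part and analytic semigroup.} I next identify the leading-order behaviour at $\psi=0$. The mean curvature operator $\mathcal{H}$ linearizes to $-\Delta_{\pa F_0}-|B_{\pa F_0}|^2$, principal symbol $|\xi|^2$, while the jump Dirichlet-to-Neumann $\mathcal{N}$ has principal symbol $-2|\xi|$ (as one sees by freezing coefficients and reducing to the half-space transmission problem). Thus the linearization of the right-hand side takes the form $-A\psi+B\psi$, where $A$ is a strongly elliptic pseudodifferential operator of order $3$ with positive principal symbol $2\beta(0)|\xi|^3$ and $B$ collects lower-order terms; in particular the contribution of the non-local potential $\mathcal{V}$, which depends smoothly on $\psi$ through \eqref{eqvE} and is at most of order $1$. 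General results on strongly elliptic pseudodifferential operators of positive order in little Hölder scales (see Escher-Simonett) then imply that $-A$ generates an analytic semigroup on $h^{k,\alpha}(\pa F_0)$ and enjoys continuous maximal regularity on the corresponding interpolation couple of order~$3$.

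\textbf{Fixed point and bootstrapping.} I work in the maximal regularity space
\[
\mathbb{E}_T:=C([0,T];h^{2,\alpha}(\pa F_0))\cap C^1([0,T];h^{\alpha-1}(\pa F_0)),
\]
recast the evolution as $\dot\psi+A\psi=G(\psi)$, with $G(\psi):=\beta(\psi)\mathcal{N}(\psi)[\mathcal{H}(\psi)+4\gamma\mathcal{V}(\psi)]+A\psi$, and verify that $G$ is smooth on the ball $\{\|\psi\|_{h^{2,\alpha}}\le 2M\}$ with values in $h^{\alpha-1}(\pa F_0)$. A standard contraction argument on $\mathbb{E}_T$ produces, for some $T=T(M,\alpha,F_0)>0$ depending only on $M$, $\alpha$, and the geometry of $F_0$, a unique solution $\psi$ with $\psi(\cdot,0)=\psi_0$ for every initial height $\psi_0$ with $\|\psi_0\|_{h^{2,\alpha}(\pa F_0)}\le M$. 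Since $G$ is in fact real-analytic in $\psi$ and $-A$ generates an analytic semigroup, a standard parabolic bootstrap upgrades $\psi$ to $C^\infty$ jointly in $(x,t)\in \pa F_0\times(0,T)$, which yields the smooth flow $(E_t)$ in the sense of Definition~\ref{def:smoothflow}; uniqueness among classical solutions is then obtained by writing two candidate solutions as graphs over a common smooth reference and applying a Gronwall argument in $\mathbb{E}_T$.

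\textbf{Main obstacle.} The most delicate point of the proof is the second step: establishing the smooth, uniform dependence of the jump Dirichlet-to-Neumann operator $\mathcal{N}(\psi)$ on $\psi\in h^{2,\alpha}$, and the continuous maximal regularity of the resulting third-order non-local principal operator. Both require careful elliptic estimates for the transmission problem \eqref{WE} on $h^{2,\alpha}$-perturbations of $\pa F_0$, uniform over $\|\psi\|_{h^{2,\alpha}}\le 2M$, and it is precisely here that the choice of the little Hölder scale (rather than plain $C^{k,\alpha}$) is essential, since density of smooth functions and interpolation properties are needed to invoke the Da Prato-Grisvard/Escher-Simonett machinery. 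Once these estimates are in place, the remaining steps are routine.
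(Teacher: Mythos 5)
The paper does not actually prove this theorem; it is cited directly from \cite{EscNis}, with a footnote remarking that the arguments there (written for the evolution in $\R^N$) carry over to the periodic setting. Your sketch is a faithful reconstruction of the Escher--Simonett/Escher--Nishiura strategy that underlies the cited reference: reduce to a scalar quasilinear nonlocal evolution for a height function over $\pa F_0$, identify the principal part as a third-order pseudodifferential operator by composing the mean-curvature linearization (symbol $|\xi|^2$) with the jump Dirichlet-to-Neumann map (symbol $-2|\xi|$), invoke analytic-semigroup and continuous maximal regularity theory in the little H\"older scale, and close with a contraction and a parabolic bootstrap, keeping track that the resulting $T$ depends only on $M$, $\alpha$ and $F_0$ --- this uniform lower bound is precisely what the paper later uses in \eqref{dalbasso}. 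The one point worth flagging is the maximal-regularity couple: $h^{\alpha-1}(\pa F_0)$ for $\alpha\in(0,1)$ is a negative-order space lying outside the usual little-H\"older scale; the standard implementation works with positive indices (or a time-weighted norm), e.g.\ with a pair of the form $(h^{s}(\pa F_0), h^{s+3}(\pa F_0))$ and initial data in an interpolation space. This is a cosmetic rather than structural slip, and the rest of your outline agrees with the cited source.
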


Our purpose is to show that for special initial data the flow exists for all time and then to study its long-time behavior. 

The main result is the following.
\begin{theorem}[Main result] \label{main thm 1}
Let $F\subset\T^3$ be a strictly stable critical set according to Definition~\ref{def:stability+} and let $U$ be a regular tubular neighborhood of $\pa F$. Then, for every  $M>0$ and $\alpha\in (0,1)$ there exists $\de_0>0$ with the following property: Let $E_0\in \mathfrak{h}^{2,\alpha}_M(F, U)$ be  such that 
$$
|E_0|=|F|\,, \qquad |E_0\Delta F|\leq \de_0\,, \qquad \text{and}\qquad \int_{\T^3}|Dw_{E_0}|^2\, dx\leq\de_0\,.
$$
 Then,   the unique classical solution $(E_t)_t$ to  the Mullins-Sekerka flow with initial datum $E_0$ is  defined  for all $t>0$. Moreover,  $E_t\to F+\sigma$ in 
 $W^{5/2,2}$ exponentially fast as $t\to +\infty$, for some $\sigma\in \R^3$. More precisely, there exist $\eta$, $c_F>0$  such that for all $t>0$, writing 
\[
\pa E_t=\{x+\psi_{\sigma, t}(x)\nu_{F+\sigma}(x):\, x\in \pa F+\sigma \}\,,
\]
we have 
$$
\|\psi_{\sigma, t}\|_{W^{5/2,2}(\pa F+\sigma)}\leq \eta\mathrm{e}^{-c_Ft}\,.
$$
Both $|\sigma|$ and $\eta$ vanish as $\de_0\to 0^+$.  
\end{theorem}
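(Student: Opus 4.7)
The plan is to implement the energy-identity/stopping-time scheme sketched in the introduction. The central quantity is the dissipation
\[
\mathcal{E}(E_t):=-\frac{d}{dt}J(E_t)=\int_{\T^3}|Dw_t|^2\,dx,
\]
which for the modified Mullins-Sekerka flow is obtained by testing \eqref{WE} with $w_t$ and using $V_t=[\pa_{\nu_t}w_t]$. A preparatory computation (to be done in a lemma, say Lemma~\ref{calculations}) produces the key identity
\[
\frac{d}{dt}\mathcal{E}(E_t)=-2\,\pa^2 J(E_t)[V_t]+R(E_t),
\]
where the remainder $R(E_t)$ collects boundary integrals of $w_t$, its tangential derivatives, and lower-order curvature terms. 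I will then fix small constants $\e_0,\de_0>0$ and consider the maximal time $\bar t\in(0,T^*]$ on which $\dist_{C^1}(E_t,F)<\e_0$ and $\mathcal{E}(E_t)<2\de_0$; our hypotheses ensure $\bar t>0$, and the whole game is to show $\bar t=T^*=+\infty$.

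On $(0,\bar t)$ the goal is a differential inequality $\frac{d}{dt}\mathcal{E}(E_t)\le -c_0\,\mathcal{E}(E_t)$, obtained from three separate estimates. First, applying Lemma~\ref{from AFM} to $E_t$ and decomposing $V_t=\vphi_t+\eta_t\cdot\nu_{E_t}$ with $\eta_t\in\Pi_F$ chosen so that $\vphi_t$ is $L^2$-orthogonal to the (approximate) translation subspace, one obtains
\[
\pa^2 J(E_t)[V_t]\ge \sigma\,\|V_t\|_{H^1(\pa E_t)}^2,
\]
after absorbing the translation piece (on which $\pa^2J(F)$ vanishes and hence $\pa^2J(E_t)$ is small by $C^1$-closeness). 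Second, using Proposition~\ref{harmonic estimates} for harmonic extensions in $\T^3$ together with trace and interpolation inequalities adapted to the $C^1$-graph structure of $\pa E_t$, one shows that each term in $R(E_t)$ is bounded by a product of a trace of $\nabla w_t$ by a small curvature quantity, yielding
\[
|R(E_t)|\le \e\,\|V_t\|_{H^1(\pa E_t)}^2
\]
with $\e\to 0$ as $\e_0,\de_0\to 0$. Third, a trace/Poincar\'e argument gives $\mathcal{E}(E_t)\le C\|V_t\|_{H^1(\pa E_t)}^2$. Combining these three estimates and integrating yields the exponential decay $\mathcal{E}(E_t)\le \de_0 e^{-c_0 t}$ on $(0,\bar t)$.

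Closing the stopping-time argument requires ruling out both failure modes at $\bar t$. The energy condition is immediately contradicted by the exponential decay. For the $C^1$ condition, the decay of $\mathcal{E}$ gives exponential $L^2$-decay of $V_t$ which, combined with uniform $h^{2,\alpha}$-bounds on $\pa E_t$ (from $\mathcal{E}(E_t)\le 2\de_0$ and Schauder estimates for \eqref{WE}) and interpolation, yields $\int_0^{\bar t}\|V_t\|_{C^1(\pa E_t)}\,dt \le C\de_0^{1/2}$; taking $\de_0$ small then contradicts $\dist_{C^1}(E_{\bar t},F)=\e_0$. Therefore $\bar t=T^*$, and the uniform H\"older bounds on the curvature from the same estimates allow one to invoke the local existence Theorem~\ref{th:EscNis} repeatedly to conclude $T^*=+\infty$.

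For the long-time behavior, the exponential decay of $\mathcal{E}$, together with the uniform regularity bounds and elliptic estimates for \eqref{WE}, promotes $\|V_t\|_{W^{s,2}(\pa E_t)}\to 0$ exponentially for a suitable $s$. Integrability of the speed in a strong enough norm then produces a limit set $F_\infty$ with $E_t\to F_\infty$ in $W^{5/2,2}$ exponentially fast; since $V_t\to 0$ forces $H_{\pa F_\infty}+4\gamma v_{F_\infty}$ to be constant, $F_\infty$ is critical, $W^{2,p}$-close to $F$, and has the same volume, so Proposition~\ref{prop:nocrit} identifies $F_\infty=F+\sigma$. The quantitative $W^{5/2,2}$-decay rate is inherited from $\mathcal{E}(E_t)\le \de_0 e^{-c_0 t}$. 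The main obstacle will be the remainder estimate on $R(E_t)$, where the three-dimensional harmonic-extension bounds of Proposition~\ref{harmonic estimates} are essential and where the restriction $N\le 3$ enters; a secondary delicate point is the splitting of $V_t$ that keeps the translation component small enough for the strict-stability coercivity to survive intact along the flow.
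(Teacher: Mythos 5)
Your overall architecture (dissipation identity, stopping-time trichotomy, coercivity of $\pa^2J$, compactness, Proposition~\ref{prop:nocrit}) matches the paper's. But there is a genuine gap in how you handle the translation direction, and it is the central difficulty of the whole argument.

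You propose to split $V_t=\vphi_t+\eta_t\cdot\nu_{E_t}$ with $\eta_t\in\Pi_F$ and then obtain $\pa^2J(E_t)[V_t]\ge\sigma\|V_t\|_{H^1(\pa E_t)}^2$ ``after absorbing the translation piece,'' on the grounds that $\pa^2J$ is small on (approximate) translations by $C^1$-closeness. This does not work. Smallness of $\pa^2J(E_t)$ on $\eta_t\cdot\nu_{E_t}$ is smallness relative to $|\eta_t|^2$, not relative to anything useful: if the translation component is a large fraction of $V_t$ (which is exactly what you must rule out), then $\pa^2J(E_t)[V_t]\approx \pa^2J(E_t)[\eta_t\cdot\nu_{E_t}]=O(\e_0)\|\eta_t\cdot\nu_{E_t}\|^2\approx O(\e_0)\|V_t\|^2$, which cannot dominate $\sigma\|V_t\|^2_{H^1}$. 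Lemma~\ref{from AFM} only gives coercivity for $\vphi$ satisfying $\min_{\eta\in\Pi_F}\|\vphi-\eta\cdot\nu_E\|_{L^2}\geq\e\|\vphi\|_{L^2}$, so before invoking it you must \emph{prove} that $V_t=[\pa_{\nu_t}w_t]$ stays uniformly bounded away from the translation subspace in this relative sense. This is the content of Step~2 of the paper, and it is not an ``absorption'': it is an argument by contradiction that uses the specific structure of the flow. Concretely, one uses criticality ($\int_{\pa E_t}w_t\,\nu_t\,d\Ha^2=0$ by translation invariance of $J$ at $E_t$) to show that if $[\pa_{\nu_t}w_t]$ were mostly a translation then, testing with $w_t-\hat w_t$ and with the harmonic extension of $\eta_t\cdot\nu_t$, one would force $\|\eta_t\cdot\nu_t\|_{L^2}^2\leq C\e^{1/2}\|[\pa_{\nu_t}w_t]\|_{L^2}^2$, contradicting the assumed dominance. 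Without this, the coercivity in your first estimate is unsubstantiated, and the whole differential inequality collapses.

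Two secondary comments. (i) Your claimed bound $\int_0^{\bar t}\|V_t\|_{C^1(\pa E_t)}\,dt\le C\de_0^{1/2}$ is not directly available from the dissipation decay; the paper instead controls the $L^2$-type distance $D(E_t)=\int_{E_t\Delta F}\mathrm{dist}(\cdot,\pa F)$ via $\frac{d}{dt}D(E_t)=-\int_{\T^3}Dh\cdot Dw_t\,dx$ (with $h$ the harmonic extension of $d_F$), obtaining exponential decay of $\|\psi_t\|_{L^2(\pa F)}$ and then upgrades to $C^1$ by interpolation against the uniform $W^{2,3}$-bounds coming from Lemma~\ref{w52conv}. (ii) Your remainder description is morally right, but note the remainder is exactly $\tfrac12\int_{\pa E_t}(\pa_{\nu_t}w_t^++\pa_{\nu_t}w_t^-)[\pa_{\nu_t}w_t]^2\,d\Ha^2$; the $L^3\to H^1\times L^2$-interpolation of Proposition~\ref{harmonic estimates}(v) is the precise mechanism that produces the factor $\|w_t-\hat w_t\|_{L^2(\pa E_t)}=O(\sqrt{\de_0})$ multiplying $\|V_t\|_{H^1}^2$, which is what makes the remainder absorbable.
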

The proof of the result is postponed until the end of this section. It will be achieved through several auxiliary results, that we state in the following and whose proofs can be found in the final section.
\begin{lemma}[Energy identities] \label{calculations} Let $(E_t)_{t\in (0, T)}$ be a smooth flow satisfying \eqref{MSnl}. The following energy idienties hold:
\begin{equation}
\label{der of J}
\frac{d}{dt} J(E_t) = -  \int_{\T^3} |D w_t|^2\, dx\,,
\end{equation}
and
\begin{equation}
\label{der of dw}
\frac{d}{dt} \left(\frac{1}{2} \int_{\T^3} |D w_t|^2\, dx \right) = -\pa^2J(E_t)\left[[\pa_{\nu_t}w_t\vphantom{^{^4}}]\right]
+ \frac{1}{2}\int_{\partial E_t} (\partial_{\nu_t} w^+_t+ \partial_{\nu_t} w_t^-) [\partial_{\nu_t} w_t]^2   \, d \Ha^{2} \,,
\end{equation}
where $\pa^2J(E_t)$ is the  quadratic form defined in \eqref{J2} (with $E_t$ in place of $E$) and, as usual, the subscript $t$ stands for  ${E_t}$.
\end{lemma}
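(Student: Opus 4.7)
The first identity \eqref{der of J} is immediate from the first variation formula \eqref{eq:J'} applied with velocity $X=V_t\nu_t$: it yields $\frac{d}{dt}J(E_t)=\int_{\pa E_t}(H_t+4\gamma v_t)V_t\,d\Ha^2$. Using the boundary condition $w_t=H_t+4\gamma v_t$ from \eqref{WE} and the evolution law $V_t=[\pa_{\nu_t}w_t]$, the right-hand side becomes $\int_{\pa E_t}w_t[\pa_{\nu_t}w_t]\,d\Ha^2$; integration by parts separately on $E_t$ and $\T^3\setminus E_t$, using $\Delta w_t^\pm = 0$, then produces $-\int_{\T^3}|Dw_t|^2\,dx$, as claimed.

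For \eqref{der of dw}, my plan is to differentiate $\int_{\T^3}|Dw_t|^2\,dx$ directly via the Reynolds transport formula applied separately in $E_t$ and $\T^3\setminus E_t$. Harmonicity of $w_t^\pm$ together with integration by parts give
\begin{equation*}
\frac{d}{dt}\int_{\T^3}|Dw_t|^2\,dx = 2\int_{\pa E_t}\bigl(\pa_{\nu_t}w_t^-\,\pa_t w_t^- - \pa_{\nu_t}w_t^+\,\pa_t w_t^+\bigr)\,d\Ha^2 + \int_{\pa E_t}\bigl(|Dw_t^-|^2-|Dw_t^+|^2\bigr)V_t\,d\Ha^2,
\end{equation*}
the second integral being the Reynolds boundary contribution. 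Since the tangential gradients of $w_t^\pm$ agree on $\pa E_t$, the last integrand simplifies to $\bigl((\pa_{\nu_t}w_t^-)^2-(\pa_{\nu_t}w_t^+)^2\bigr)V_t$. I then convert bulk to material derivatives via $\pa_t w_t^\pm = \mathcal{D}_t w_t - V_t\,\pa_{\nu_t}w_t^\pm$, where $\mathcal{D}_t$ denotes the material derivative along the normal flow; note that $\mathcal{D}_t w_t^+ = \mathcal{D}_t w_t^- =: \mathcal{D}_t w_t$ on $\pa E_t$ because the traces coincide. Straightforward simplification, combined with the identity $(a+b)(a-b)^2=(a^2-b^2)(a-b)$ applied to $a=\pa_{\nu_t}w_t^+$, $b=\pa_{\nu_t}w_t^-$, rearranges this to
\begin{equation*}
\frac{1}{2}\frac{d}{dt}\int_{\T^3}|Dw_t|^2\,dx = -\int_{\pa E_t}V_t\,\mathcal{D}_t w_t\,d\Ha^2 + \frac{1}{2}\int_{\pa E_t}(\pa_{\nu_t}w_t^++\pa_{\nu_t}w_t^-)[\pa_{\nu_t}w_t]^2\,d\Ha^2.
\end{equation*}

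It remains to identify $\int_{\pa E_t}V_t\,\mathcal{D}_t w_t\,d\Ha^2$ with the quadratic form $\pa^2 J(E_t)[V_t]$. Splitting $\mathcal{D}_t w_t = \mathcal{D}_t H_t + 4\gamma\,\mathcal{D}_t v_t$, the classical identity $\mathcal{D}_t H_t = -\Delta_\tau V_t - |B_{\pa E_t}|^2 V_t$ for normal flows followed by tangential integration by parts on the closed surface $\pa E_t$ reproduces the local terms $\int|D_\tau V_t|^2 - \int|B_{\pa E_t}|^2 V_t^2$ in \eqref{J2}. For the nonlocal part I use formula \eqref{dot v} to express the Eulerian derivative $\dot v_t(\cdot) = 2\int_{\pa E_t}G_{\T^3}(\cdot,y)V_t(y)\,d\Ha^2(y)$, and convert to the material derivative via $\mathcal{D}_t v_t = \dot v_t + V_t\,\pa_{\nu_t}v_t$ on $\pa E_t$; this generates precisely the double Green's integral and the term $4\gamma\int\pa_{\nu_t}v_t\,V_t^2\,d\Ha^2$ of \eqref{J2}. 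Combining all pieces gives \eqref{der of dw}.

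The main technical delicacy is the careful bookkeeping of bulk versus material time derivatives: although the traces $w_t^+|_{\pa E_t}$ and $w_t^-|_{\pa E_t}$ coincide, their bulk time derivatives $\pa_t w_t^+$ and $\pa_t w_t^-$ do not (consistent with the jump of $\pa_{\nu_t}w_t$), and it is precisely this discrepancy that produces the remainder term rather than leading to cancellation.
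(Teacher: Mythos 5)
Your proof is correct and follows essentially the same route as the paper: the first variation formula and integration by parts for \eqref{der of J}, and a Reynolds/transport-type differentiation of $\int|Dw_t|^2$ followed by substitution of the evolutions of $H_t$ (via $\mathcal{D}_t H_t=-\Delta_\tau V_t-|B_t|^2 V_t$) and $v_t$ (via \eqref{dot v}) for \eqref{der of dw}. The only stylistic difference is your explicit use of the material derivative and the observation $\mathcal{D}_t w_t^+=\mathcal{D}_t w_t^-$, which bundles in one step the cancellation of terms that the paper obtains by computing $\dot w_t^-$ from the boundary condition (where the tangential contribution $D_\tau(H_t+4\gamma v_t-w_t)\cdot X_t$ vanishes) and then combining the interior and exterior contributions; this is a tidy reorganization, not a different argument.
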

The proof of the lemma is given in the final section. Note that if $E_t$ is not critical then  $\frac{d^2}{dt^2} J(E_t)$ is not equal to  
the second variation of  $J(E_t)$ evaluated at $[\pa_{\nu_t}w_t\vphantom{^{^4}}]$. However, quite surprisingly
 the formulas above show that  the leading order term of $\frac{d^2}{dt^2} J(E_t)$  is indeed twice 
the quadratic form $\pa^2J(E_t)$ at $[\pa_{\nu_t}w_t\vphantom{^{^4}}]$.  The same holds for the  surface diffusion flow, see \eqref{der of DH}. The next proposition provides crucial boundary estimates for harmonic functions. Some of them are perhaps well-known to the experts. However, for the convenience of the reader we provide a self-contained proof in the final section.

\begin{proposition}[Boundary estimates for harmonic functions]
\label{harmonic estimates}
Let $E \subset \T^3$ be of class $C^{1,\alpha}$,  $f \in C^\alpha(\partial E)$ (with zero average on $\partial E$) and let $u \in H^{1}(\T^3)$ be the solution of 
\[
-\Delta u = f \Ha^{2} \,\, \rtangle{\partial E}
\]  
with zero average in $\T^3$. Denote   $u^- = u \big|_{E}$ and $u^+ = u \big|_{\T^3 \setminus E}$ and assume that $u^-$ and $u^+$ are of class $C^1$ up to the boundary  $\pa E$. Then, for every $1 < p < \infty$ there exists a constant $C$,  which depends only on the $C^{1,\alpha}$
bounds on $\partial E$ and on $p$, such that: 
\begin{enumerate}
\item[(i)] 
\[
\| u \|_{L^p(\partial E)} \leq C \|f\|_{L^p(\partial E)}\,;
\]
\item[(ii)]
\[
\| \partial_{\nu_E} u^+\|_{L^2(\partial E)} +  \| \partial_{\nu_E} u^-\|_{L^2(\partial E)} \leq  C \|u \|_{H^1(\partial E)}\,;
\]  
\item[(iii)] 
\[
\| \partial_{\nu_E} u^+\|_{L^p(\partial E)} +  \| \partial_{\nu_E} u^-\|_{L^p(\partial E)} \leq  C \|f\|_{L^p(\partial E)}\,.
\]
 \item[(iv)]
$$
\|u\|_{C^{0, \beta}(\pa E)}\leq C \|f\|_{L^{p}(\pa E)}
$$ 
for all $\beta\in (0, \frac{p-2}{p})$, with  $C$ depending also on $\beta$.
\item[(v)] 
Moreover, if $f \in H^1(\partial E)$, then for every $2\leq p< +\infty$ there exists a constant $C$, which depends only on the $C^{1,\alpha}$
bounds on $\partial E$ and on $p$, such that 
\[
\| f\|_{L^p(\partial E)} \leq C \|f\|_{H^1(\partial E)}^{\frac{p-1}{p}}\,  \|u \|_{L^2(\partial E)}^{\frac1p}.
\]

\end{enumerate}
\end{proposition}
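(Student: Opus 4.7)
The starting point is to view $u$ as a single layer potential. By Remark~\ref{rm:potential},
\[
u(x)=\int_{\pa E}G_{\T^3}(x,y)\,f(y)\,d\Ha^{2}(y),
\]
and since $G_{\T^3}(x,y)=\tfrac{1}{4\pi|x-y|}+R(x,y)$ with $R$ smooth, after localising and flattening $\pa E$ by a $C^{1,\alpha}$ chart all the required estimates will reduce to well-known mapping properties of the classical single layer potential of a $C^{1,\alpha}$ 2-surface in $\R^3$, with uniform constants depending only on the $C^{1,\alpha}$-bounds.

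Items (i)--(iii) are then standard potential-theoretic estimates. For (i) I would apply Schur's test: on a $C^{1,\alpha}$ 2-surface, $|x-y|^{-1}$ is in $L^{1}_{\mathrm{loc}}$ in surface coordinates, so $\sup_{x\in\pa E}\int_{\pa E}|x-y|^{-1}d\Ha^{2}(y)<\infty$, and symmetry of the kernel yields $L^p\to L^p$ boundedness for every $1\leq p\leq\infty$. Item (ii) is the Dirichlet-to-Neumann estimate on a $C^{1,\alpha}$ domain, obtained by the Rellich identity applied separately to the harmonic extensions $u^\pm$ in $E$ and $E^c$. For (iii) I would invoke the classical jump relations
\[
\pa_{\nu_E}u^\pm(x)=\mp\tfrac12 f(x)+\mathrm{p.v.}\int_{\pa E}\pa_{\nu_x}G_{\T^3}(x,y)\,f(y)\,d\Ha^{2}(y),
\]
and note that after flattening the principal value becomes a Calder\'on--Zygmund operator of order zero on $\pa E$ (the would-be non-CZ component of the derivative kernel is killed by the factor $(x-y)\cdot\nu_x=O(|x-y|^{1+\alpha})$ coming from the $C^{1,\alpha}$-regularity of the normal), hence bounded on $L^p$ for $1<p<\infty$.

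Item (iv) I would treat by the direct kernel estimate
\[
|u(x)-u(z)|\leq \|f\|_{L^p(\pa E)}\,\bigl\|G_{\T^3}(x,\cdot)-G_{\T^3}(z,\cdot)\bigr\|_{L^{p'}(\pa E)},
\]
bounding the $L^{p'}$-norm of the difference kernel by splitting $\pa E$ into a near zone $\{|x-y|\leq 2|x-z|\}$, where the crude pointwise bound $|G|\leq C|x-y|^{-1}$ is $L^{p'}$-integrable on a 2-surface for any $p'<2$ (i.e.\ $p>2$), and a far zone, where the gradient bound $|G(x,y)-G(z,y)|\leq C|x-z|\,|x-y|^{-2}$ yields the stated H\"older exponent $\beta<(p-2)/p$.

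For (v) the strategy is to combine the 2D Gagliardo--Nirenberg inequality on the compact 2-surface $\pa E$,
\[
\|f\|_{L^p(\pa E)}^p\leq C\,\|f\|_{H^1(\pa E)}^{p-2}\,\|f\|_{L^2(\pa E)}^2\qquad(p\geq 2),
\]
with the improved $L^2$-bound $\|f\|_{L^2}^2\leq C\|f\|_{H^1}\|u\|_{L^2}$. To establish the latter I would pick any extension $\tilde f\in H^1(\T^3)$ of $f$ with $\|D\tilde f\|_{L^2(\T^3)}\leq C\|f\|_{H^{1/2}(\pa E)}$ and interpolate $\|f\|_{H^{1/2}(\pa E)}\leq C\|f\|_{H^1}^{1/2}\|f\|_{L^2}^{1/2}$. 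Testing the weak form of $-\Delta u=f\Ha^{2}\rtangle\pa E$ once against $\tilde f$ and once against $u$ gives respectively
\[
\|f\|_{L^2(\pa E)}^2=\int_{\T^3}Du\cdot D\tilde f\,dx\leq\|Du\|_{L^2(\T^3)}\|D\tilde f\|_{L^2(\T^3)}\quad\text{and}\quad\|Du\|_{L^2(\T^3)}^{2}\leq\|f\|_{L^2}\|u\|_{L^2};
\]
combining these and cancelling a factor of $\|f\|_{L^2}^{1/2}$ yields the reduced estimate, and plugging back into Gagliardo--Nirenberg proves (v). The main obstacle is item (iii): the Calder\'on--Zygmund boundedness of the normal derivative of the single layer on a merely $C^{1,\alpha}$ surface requires a careful verification of the CZ cancellation properties after flattening, and this is the step that really uses the $C^{1,\alpha}$ hypothesis; once it is in hand, the remaining items follow from fairly direct combinations of standard inequalities with the weak formulation.
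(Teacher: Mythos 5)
Your proposal is essentially correct and follows the paper's approach very closely for items (i), (ii), and (iv): the same single-layer-potential representation and reduction to $1/|x-y|$ for (i) and (iv), and the Rellich (Jerison--Kenig) identity for (ii). Two points worth calibrating. For (iii), you describe the normal-derivative operator after cancellation as a ``Calder\'on--Zygmund operator of order zero'' with a principal value, but your own observation that $(x-y)\cdot\nu(x)=O(|x-y|^{1+\alpha})$ actually shows the kernel becomes \emph{weakly singular}, $O(|x'-y'|^{\alpha-2})$ after flattening, hence integrable in surface coordinates; no principal value or CZ theory is needed, only a convolution/Young estimate. This is exactly the simplification the paper emphasizes over the full $C^1$ theory of Fabes--Jodeit--Rivi\`ere. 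For (v), your route is genuinely different from the paper's: you test the weak formulation against a bounded $H^1(\T^3)$ extension of $f$ itself (plus against $u$), combined with the $H^1$--$L^2$ interpolation of $\|f\|_{H^{1/2}(\pa E)}$, yielding $\|f\|_{L^2}^2\leq C\|f\|_{H^1}\|u\|_{L^2}$ directly; the paper instead interpolates $\|f\|_{L^2}\leq C\|f\|_{H^1}^{1/2}\|f\|_{H^{-1}}^{1/2}$ and then bounds $\|f\|_{H^{-1}(\pa E)}\leq C\|u\|_{L^2(\pa E)}$ by a duality argument that tests against harmonic extensions of arbitrary $\vphi\in H^1(\pa E)$ and therefore \emph{relies on part (ii)}. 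Your argument is self-contained (it does not need (ii)) and slightly more direct; the paper's variant, by going through $H^{-1}$, makes the functional-analytic structure (the Gelfand-triple interpolation and the dual role of the single-layer operator) more visible. Minor slip: in the final cancellation you divide by $\|f\|_{L^2}$, not $\|f\|_{L^2}^{1/2}$, but the arithmetic is otherwise sound.
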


We will need also the following:
\begin{lemma}[Compactness of sets]\label{w52conv}
Let $F\subset\T^3$ be a smooth set and denote by $U$ a fixed regular tubular neighborhood of $\pa F$. Let $\{E_n\}_n\subset \mathfrak{C}^1_M(F, U) $ be a sequence of sets  such that 
$$
\sup_n\int_{\T^3}|Dw_{E_n}|^2\, dx<+\infty\,.
$$
Then there exists $F'\in \mathfrak{C}^1_M(F, U)$ of class  $W^{\frac{5}{2},2}$ such that, up to a  (non relabeled) subsequence, $E_n\to F'$ in $W^{2,p}$ for all $1\leq p<4$. Moreover, if
$$
\int_{\T^3}|Dw_{E_n}|^2\, dx\to 0\,,
$$
then $F'$ is critical in the sense of Definition~\ref{def:criticality} and the convergence holds in $W^{\frac52,2}$.
\end{lemma}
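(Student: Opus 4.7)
The plan is to extract a uniform $H^{5/2}$-bound on the graph functions $\psi_n$ representing $\pa E_n$ over $\pa F$ via elliptic regularity for the mean-curvature operator, driven by the energy bound on $w_{E_n}$, and then to promote compactness to convergence. Write $\pa E_n=\{x+\psi_n(x)\nu_F(x):\, x\in\pa F\}$ with $\|\psi_n\|_{C^1(\pa F)}\leq M$; by Arzel\`a--Ascoli a (non relabeled) subsequence converges in $C^\beta(\pa F)$ for every $\beta\in(0,1)$ to some Lipschitz function $\psi$ with $\|\psi\|_{C^1(\pa F)}\leq M$. Set $F':=\{x+\psi(x)\nu_F(x):\, x\in\pa F\}\in\mathfrak{C}^1_M(F,U)$. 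The diffeomorphisms $\Phi_n(x):=x+\psi_n(x)\nu_F(x):\pa F\to\pa E_n$ have uniformly bounded bilipschitz constants, so pullback through $\Phi_n$ preserves fractional Sobolev norms with uniform equivalence constants.

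Because $w_{E_n}$ is the harmonic extension to $\T^3\setminus\pa E_n$ of its trace $g_n:=H_{\pa E_n}+4\gamma v_{E_n}$, and the $C^1$-regularity of $\pa E_n$ is uniform in $n$, the standard equivalence between the Dirichlet energy of a harmonic extension and the $\dot H^{1/2}$-seminorm of its trace gives a uniform bound on $\|g_n\|_{\dot H^{1/2}(\pa E_n)}$. Schauder estimates applied to \eqref{eqvE} yield a uniform $C^{1,\beta}$-bound on $v_{E_n}$, hence a uniform $H^{1/2}$-bound on $v_{E_n}|_{\pa E_n}$. Controlling the additive constant by averaging $g_n$ against a fixed smooth function, one deduces $\|H_{\pa E_n}\|_{H^{1/2}(\pa E_n)}\leq C$. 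After pullback this reads $\|H_{\pa E_n}\circ\Phi_n\|_{H^{1/2}(\pa F)}\leq C$.

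The technical heart of the lemma is now to deduce the uniform bound $\|\psi_n\|_{H^{5/2}(\pa F)}\leq C$. In local charts on $\pa F$ the pullback of the mean curvature has the quasilinear form
\[
H_{\pa E_n}\circ\Phi_n = a^{ij}(x,\psi_n,\nabla\psi_n)\,\pa_{ij}\psi_n + b(x,\psi_n,\nabla\psi_n),
\]
with $(a^{ij})$ smooth, symmetric, and uniformly elliptic on the compact set $\{|\psi|+|\nabla\psi|\leq 2M\}$. Applying Nirenberg's difference-quotient technique to this quasilinear equation, together with the Sobolev embedding $H^{1/2}(\pa F)\hookrightarrow L^p(\pa F)$ for every $p<\infty$ (valid because $\pa F$ is two-dimensional) and the product estimates in fractional Sobolev spaces required to close the nonlinear interactions, yields the desired bound. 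The compact embedding $H^{5/2}(\pa F)\hookrightarrow\hookrightarrow W^{2,p}(\pa F)$, which holds precisely for $p<4$, then gives along a further subsequence $\psi_n\to\psi$ in $W^{2,p}$ for every $p<4$, with $\psi\in H^{5/2}(\pa F)$; equivalently, $E_n\to F'$ in $W^{2,p}$ and $F'$ is of class $W^{5/2,2}$.

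For the last conclusion, assume $\int_{\T^3}|Dw_{E_n}|^2\, dx\to 0$. Then $w_{E_n}$ converges in $H^1(\T^3)$ to a constant $c$, hence $g_n\to c$ strongly in $H^{1/2}(\pa E_n)$ by trace. Since $v_{E_n}\to v_{F'}$ in $C^{1,\beta}$ (Schauder estimates together with the $C^1$-convergence of the sets), the quasilinear regularity scheme of the previous paragraph, applied now to the differences $\psi_n-\psi$, upgrades weak $H^{5/2}$-compactness to strong convergence $\psi_n\to\psi$ in $H^{5/2}(\pa F)$, i.e.\ $E_n\to F'$ in $W^{5/2,2}$. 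Passing to the limit in the equation yields $H_{\pa F'}+4\gamma v_{F'}=c$ on $\pa F'$, and a standard Schauder bootstrap on this prescribed-mean-curvature equation upgrades $F'$ to a smooth set, hence critical in the sense of Definition~\ref{def:criticality}. The main obstacle I foresee is the quasilinear regularity step: the coefficients $a^{ij}(\cdot,\psi_n,\nabla\psi_n)$ are only continuous with no a priori uniform modulus of continuity, so the difference-quotient estimate has to be set up carefully and relies crucially on the two-dimensionality of $\pa F$.
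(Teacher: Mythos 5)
Your overall strategy parallels the paper's: bound $\|H_{\pa E_n}\|_{H^{1/2}(\pa E_n)}$ via the trace/Dirichlet-energy equivalence for the harmonic extension $w_{E_n}$, then bootstrap through the prescribed-mean-curvature equation in local charts to $W^{5/2,2}$ control of the graph functions, and pass to the limit. The obstacle you correctly flag at the end, however, is a genuine gap that must be closed, not merely noted. The coefficients $a^{ij}(\cdot,\psi_n,\nabla\psi_n)$ are a priori only $C^0$ since $\{E_n\}\subset\mathfrak{C}^1_M(F,U)$, and neither a difference-quotient argument nor a coefficient-freezing argument will close at the $H^{5/2}$ level without a uniform modulus of continuity on $\nabla\psi_n$. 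The paper supplies exactly this through an intermediate Calder\'on--Zygmund step that is missing from your write-up: from $\sup_n\|H_{\pa E_n}\|_{H^{1/2}(\pa E_n)}<\infty$ and the two-dimensional Sobolev embedding $H^{1/2}\hookrightarrow L^4$ one gets $\sup_n\|H_{\pa E_n}\|_{L^{4}(\pa E_n)}<\infty$, whence $\sup_n\|\psi_n\|_{W^{2,4}(\pa F)}<\infty$ by elliptic $L^p$-theory (see \cite[Lemma~7.2, Remark~7.3]{AFM}), and $W^{2,4}\hookrightarrow C^{1,1/2}$ on a two-dimensional surface gives the needed uniform H\"older bound on $\nabla\psi_n$, hence on the coefficients. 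Only with this in hand can one straighten so as to make the $C^1$-norm small on each chart and absorb the quadratic error term, as is done in the paper's Lemma~\ref{noia}.

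A second, smaller issue is your control of the additive constant: ``averaging $g_n$ against a fixed smooth function'' does not obviously control $\overline H_{\pa E_n}$, since the trace estimate only bounds $g_n$ modulo constants. The paper instead integrates the divergence-form chart representation of $H_{\pa E_n}$ over a disk $B'$ and observes that the resulting boundary integral is controlled by the uniform $C^1$-bounds on $\pa E_n$; together with the $L^1$ bound on $H_n-\tilde H_n$ coming from $H^{1/2}$ this pins down the average. As for the final regularity step, whether one uses Nirenberg difference quotients (your proposal) or the seminorm identity $[\Delta u]_{\frac12,\R^2}=[D^2u]_{\frac12,\R^2}$ together with the fractional Leibniz estimate of Lemma~\ref{nicola1} and an absorption argument (the paper's Lemma~\ref{noia}) is a matter of technique; either should work once the $W^{2,4}$ bound and the control of the average are in place. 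Your treatment of the criticality and strong-convergence assertions in the case $\int_{\T^3}|Dw_{E_n}|^2\,dx\to 0$ mirrors the paper's.
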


We give now the proof of Theorem \ref{main thm 1}.

\begin{proof}[\textbf{Proof of Theorem \ref{main thm 1}.}]
 Throughout the proof   $C$ will denote a constant depending only on the $C^{1,\alpha}$-bounds on the boundary of the set. The value of $C$  may change from line to line.  
 We start by the trivial observation that if $\{E_n\}_n\subset \mathfrak{h}^{2,\alpha}_M(F, U)$ and $|E_n\Delta F|\to 0$, then 
 $E_n\to F$ in $C^{2,\beta}$ for all $\beta\in (0,\alpha)$. For any set $E\in \mathfrak{C}^{1}_M(F, U)$  consider  
 \beq\label{D(E)0}
 D(E):=\int_{E\Delta F}\mathrm{dist\,}(x, \pa F)\, dx=\int_E d_F\, dx-\int_Fd_F\, dx,
 \eeq
where $d_F$ is the signed distance function defined in \eqref{etaisyys}. Using coarea formula the reader may check that 
 \beq\label{D(E)}
 |E\Delta F|\leq C\|\psi_E\|_{L^1(\pa F)} \leq C\|\psi_E\|_{L^2(\pa F)}\leq C\sqrt{D(E)}
 \eeq
for a constant $C$ depending only on $F$.  For every  $\e_0>0$ sufficiently small, there exists $\de_0\in (0,1)$ so small that for any set $E\in \mathfrak{C}^{1}_M(F, U)$ the following implications hold true:
 \beq\label{de01}
 E\in \mathfrak{h}^{2,\alpha}_M(F, U)\text{ and } D(E)\leq \de_0\Longrightarrow \|\psi_E\|_{C^1(\pa F)}\leq \frac{\e_0}2\,
 \eeq  
 and 
 \beq\label{de02}
  \|\psi_E\|_{C^1(\pa F)}\leq \e_0\text{ and } \int_{\T^3} |D w_{E}|^2\, dx \leq 1\Longrightarrow \|\psi_E\|_{W^{2,3}(\pa F)}\leq \omega(\e_0)\leq 1\,,
 \eeq  
 where $\omega$ is a positive non-decreasing function such that $\omega(\e_0)\to 0$ as $\e_0\to 0^+$. The last implication is true thanks to Lemma~\ref{w52conv}.
 Fix $\e_0$, $\de_0\in (0,1)$ satisfying \eqref{de01} and \eqref{de02} and choose an initial set $E_0\in \mathfrak{h}^{2,\alpha}_M(F, U)$ such that
 \beq\label{initial}
 D(E_0)\leq \delta_0\qquad\text{and}\qquad \int_{\T^3} |D w_{E_0}|^2\, dx \leq \delta_0\,.
 \eeq
 Let $(E_t)_{t\in (0, T(E_0))}$ be the unique classical solution to the modified Mullins-Sekerka flow provided 
 by Theorem~\ref{th:EscNis}. Here $T(E)\in (0,+\infty]$ stands for  the maximal time of existence of the classical  solution starting from $E$.  By the same theorem, there exists $T_0>0$ such that 
 \beq\label{dalbasso}
 T(E)\geq T_0\qquad\text{for  all $E\in \mathfrak{h}^{2,\alpha}_M(F, U)$.}
 \eeq
 We now split the rest of the proof into several steps. 
 
 \vspace{5pt}
\noindent {\bf Step 1.}{\it (Stopping-time)} Let $\bar t\leq T(E_0)$ be the maximal time such that 
\beq\label{Tprimo}
\|\psi_t\|_{C^1(\pa F)}<\e_0\quad\text{and}\quad \int_{\T^3} |D w_{t}|^2\, dx < 2\delta_0
\quad\text{for all $t\in (0, \bar t)$,}
\eeq
with $\eps_0>0$ a suitable  constant that will be chosen below. Here and in the following the subscript $t$ stands for the subscript $E_t$. 
 Note that such a maximal time is well defined in view of \eqref{de01} and \eqref{initial}. We claim that  by taking $\de_0$ smaller if needed, we have $\bar t=T(E_0)$.
 
\vspace{5pt}

\vspace{5pt}
\noindent {\bf Step 2.}{\it (Estimate of the  translational component of the flow)} We claim that there exists small $\e>0$ such that
\begin{equation} \label{not a translation}
\min_{\eta\in \Pi_F}\big\|\,[\pa_{\nu_t}w_t]- \eta\cdot\nu_t\big\|_{L^2(\pa E_t)}\geq \e\|[\pa_{\nu_t}w_t]\|_{L^2(\pa E_t)}\qquad\text{for all }t\in (0, \bar t)\,,
\end{equation}
where $\Pi_{F}$ is defined in \eqref{projektio}. To this aim, let $\eta_t\in \Pi_{F}$ be such that 
\begin{equation} \label{not a translation 2}
[\partial_{\nu_t} w_t]  = \eta_t\cdot \nu_t + g,
\end{equation} 
where $g$ is orthogonal to the subspace of $L^2(\partial E_t)$ spanned by $\tilde e_i\cdot \nu_{t}$ with $i \in I_F$ (see \eqref{indeksit}).  We argue by contradiction assuming $\|g\|_{L^2(\pa E_t)} < \eps \| [\partial_{\nu_t} w_t]   \|_{L^2(\pa E_t)}$. 
 First of all, by \eqref{eq:J'} and the translation invariance of the energy we have
 $$
 0=\frac{d}{ds}J(E_t+s\eta_t)_{\bigl|_{s=0}}=\int_{\pa E_t}(H_{t}+4\gamma v_t)\eta_t\cdot \nu_t\, d\Ha^{2}=\int_{\pa E_t}w_t(\eta_t\cdot \nu_t)\, d\Ha^{2}\,.
 $$
Thus, multiplying \eqref{not a translation 2} by $w_t-\hat w_t$, with $\hat w_t:=\medintinrigo_{\T^3}w_t\, dx$,  and integrating over $\partial E_t$,  we get
\beq\label{trans}
\begin{split}
\int_{\T^3}|Dw_t|^2 \, dx &=-\int_{\pa E_t}w_t [\pa_{\nu_t}w_t]\, d\Ha^2= -\int_{\pa E_t}(w_t-\hat w_t) [\pa_{\nu_t}w_t]\, d\Ha^2\\
&= -\int_{\pa E_t}(w_t-\hat w_t) g\, d\Ha^2\\
&\leq  \eps \| w_t - \hat w_t \|_{L^2(\partial E_t)} \|[\partial_{\nu_t} w_t]   \|_{L^2(\partial E_t)}.
\end{split}
\eeq
Note that in the second  and the third equality above we have used the fact that $[\pa_{\nu_t}w_t]$ and $\nu_t$, respectively, have zero average on $\pa E_t$.  
Let us denote the (periodic) harmonic extension of $\eta_t\cdot\nu_t$ to $\T^3$ by $f$. Since 
\[
\int_{\partial F} |\tilde e_i \cdot \nu_F|^2 \,  d\Ha^{2} >0 \qquad \text{for }\, i \in I_F
\]
from \eqref{Tprimo} it follows that if $\eps_0$ is small enough then $||\tilde e_i \cdot \nu_t||_{L^2(\partial E_t)} \geq c_0 >0$ for all $i \in I_F$. Hence
 $|\eta_t|\leq C\|[\partial_{\nu_t} w_t]   \|_{L^2(\partial E_t)}$. By \eqref{de02} we have 
\begin{equation} \label{bound for harm ext}
\|Df\|_{L^2(\T^3)} \leq C \|\eta_t\cdot \nu_t\|_{H^{1/2}(\partial E_t)} \leq C |\eta_t| \|\nu_t\|_{W^{1,3}(\pa E_t)}\leq 
C \|[\partial_{\nu_t} w_t]   \|_{L^2(\partial E_t)}\,.
\end{equation}
Note now that 
\begin{equation} \label{the equation jump}
\Delta w_t = [\partial_\nu w_t]  \Ha^{2}  \rtangle{\partial E}\qquad\text{in $\T^3$.}
\end{equation}
 We may then apply Proposition~\ref{harmonic estimates}-(i) to obtain
\begin{equation} \label{from poincare}
\| w_t - \hat w_t \|_{L^2(\partial E_t)}  \leq C\|[\partial_{\nu_t} w_t]   \|_{L^2(\partial E_t)}.
\end{equation}
Thus, combining \eqref{not a translation 2} with \eqref{trans}--\eqref{from poincare}, we infer
\[
\begin{split}
\|\eta_t\cdot\nu_t\|^2_{L^2(\pa E_t)}&=\int_{\pa E_t}[\pa_{\nu_t}w_t] (\eta_t\cdot\nu_t)\, d\Ha^2= - \int_{\T^3}  Df\cdot  Dw_t \, dx\\
&\leq \left(\int_{\T^3} |Df|^2 \,dx\right)^{1/2}  \left(\int_{\T^3}  |Dw_t|^2\, dx\right)^{1/2} \\
&\leq C  \eps^{1/2}\|[\partial_{\nu_t} w_t] \|^2_{L^2(\partial E_t)}\,. 
\end{split}
\]
If $\e$ is chosen so small that $C\e^{\frac12}+\e^2 <1$ in the last inequality, then we reach a contradiction to 
\eqref{not a translation 2} and the fact that $\|g\|_{L^2(\pa E_t)}<\e \| [\partial_{\nu_t} w_t]   \|_{L^2(\pa E_t)}$. This shows that for this choice of $\e$ condition \eqref{not a translation} holds.
Recall now that by Lemma~\ref{from AFM} and Proposition~\ref{prop:nocrit}, there exist $\sigma_\e$ and  $\de_1>0$ with the following properties: for any set $E\in \mathfrak{C}^{1}_M(F, U)$ 
\beq\label{de03}
\begin{split}
  \|\psi_E\|_{W^{2,3}(\pa F)}\leq\de_1\Longrightarrow&\,\,
 \partial^2 J(E)[\vphi]\geq \sigma_\e\| \vphi\|_{H^1(\pa E)}^2 \text{ for all $\vphi\in \widetilde H(\pa E)$}\\
& \text{ s.t. 
 $\min_{\eta\in \Pi_F}\|\vphi-\eta\cdot\nu_E\|_{L^2(\pa E)}\geq \e\|\vphi\|_{L^2(\pa E)}$}
 \end{split}
 \eeq
 and 
 \beq\label{de04}
 \text{ $F'$  critical, }|F|=|F'|\quad\text{ and}\quad \|\psi_{F'}\|_{W^{2,3}(\pa F)}\leq\de_1
 \Longrightarrow F'=F+\sigma
 \eeq
 for a suitable $\sigma\in \R^3$. By taking $\e_0$ (and $\de_0$) smaller, if needed, we may ensure that 
 \beq\label{de05}
 \omega(\e_0)\leq \de_1\,,
 \eeq
 where $\omega$ is the modulus of continuity introduced in \eqref{de02}.
 
 \vspace{5pt}
\noindent {\bf Step 3.}{\it (The stopping time $\bar t$ equals the maximal time $T(E_0)$)} Here we show that, by taking $\de_0$ smaller if needed, we have $\bar t=T(E_0)$. To this aim, assume by contradiction that $\bar t< T(E_0)$. Then,   
$$
\|\psi_{\bar t}\|_{C^1(\pa F)}=\e_0\quad\text{or}\quad \int_{\T^3} |D w_{\bar t}|^2\, dx =2\delta_0
$$
We further split into two sub-steps, according to the two alternatives above.

\vspace{5pt}
{\it Step 3-(a).}  Assume that 
\beq\label{step3a}
\int_{\T^3} |D w_{\bar t}|^2\, dx =2\delta_0
\eeq
Recall that \eqref{not a translation} holds. Thus, by \eqref{de02}, \eqref{Tprimo},  \eqref{de03}, and \eqref{de05} we have
$$
\partial^2 J(E_t)\left[[\pa_{\nu_t}w_t\vphantom{^{^4}}]\right]\geq \sigma_\e\|[\pa_{\nu_t}w_t] \|_{H^1(\pa E)}^2 \text{ for all $t\in (0, \bar t)$.}
$$

  In turn, by Lemma~\ref{calculations}   we may estimate 
\[
\begin{split}
\frac{d}{dt} \left(\frac{1}{2} \int_{\T^3} |D w_t|^2\, dx \right) \leq &-\sigma_\eps \|[\pa_{\nu_t}w_t] \|_{H^1(\pa E)}^2+ \frac{1}{2}\int_{\partial E_t} (\partial_{\nu_t} w^+_t+ \partial_{\nu_t} w_t^-) [\partial_{\nu_t} w_t]^2   \, d \Ha^{2}
\end{split}
\]
for every $t \leq \bar t$. By Proposition~\ref{harmonic estimates}-(iii) and \eqref{the equation jump}, we may estimate the last term by
\[
\begin{split}
\int_{\partial E_t} (\partial_{\nu_t} w^+_t+ \partial_{\nu_t} w_t^-) [\partial_{\nu_t} w_t]^2   \, d \Ha^{2} &\leq C \int_{\partial E_t} (|\partial_{\nu_t} w^+_t|^3 + |\partial_{\nu_t} w^-_t|^3) \, d \Ha^{2} \\
&\leq C \int_{\partial E_t}| [\partial_{\nu_t} w_t] |^3  \, d \Ha^{2}. 
\end{split}
\]
Now,  Proposition~\ref{harmonic estimates}-(v)  implies
\[
\|[\partial_{\nu_t} w_t] \|_{L^3(\partial E_t)} \leq C \|[\partial_{\nu_t} w_t]  \|_{H^1(\partial E_t)}^{2/3}  \|w_t - \hat w_t\|_{L^2(\partial E_t)}^{1/3}.
\]  
Therefore, combining the last three estimates,  we get 
\beq\label{quasi}
\begin{split}
\frac{d}{dt} \left(\frac{1}{2} \int_{\T^3} |D w_t|^2\, dx \right)
&\leq -\sigma_\eps \|[\pa_{\nu_t}w_t] \|_{H^1(\pa E_t)}^2+C\|w_t - \hat w_t\|_{L^2(\partial E_t)}  \|[\pa_{\nu_t}w_t] \|_{H^1(\pa E_t)}^2\\
&\leq -\frac{\sigma_{{\eps}}}{2} \|[\pa_{\nu_t}w_t] \|_{H^1(\pa E_t)}^2
\end{split}
\eeq
for every $t \leq \bar t$,  where the last inequality holds provided that $\delta_0$ is small enough since by \eqref{Tprimo} and by trace theorem
$$
\|w_t-\hat w_t\|^2_{L^2(\pa E_t)}\leq C\int_{\T^3}|Dw_t|^2\, dx\leq C\delta_0\,.
$$

 We use \eqref{from poincare} to conclude
\[
\begin{split}
 \int_{\T^3} |D w_t|^2\, dx &=- \int_{\partial E_t} w_t [\partial_{\nu_t} w_t] \, d \Ha^{2} = - \int_{\partial E_t} (w_t - \hat w_t) [\partial_{\nu_t} w_t] \, d \Ha^{2} \\
&\leq  \| w_t - \hat w_t \|_{L^2(\partial E_t)}  \|[\partial_{\nu_t} w_t]\|_{L^2(\partial E_t)} \\
&\leq C \|[\partial_{\nu_t} w_t]\|_{L^2(\partial E_t)}^2\,.
\end{split}
\]
Combining the above inequality with \eqref{quasi}, we finally obtain  
\[
\frac{d}{dt} \int_{\T^3} |D w_t|^2\, dx \leq -c_0  \int_{\T^3} |D w_t|^2\, dx
\]
for every $t \leq \bar t$ and for a suitable $c_0>0$.  Integrating the differential inequality  and recalling \eqref{initial}, we get
\beq\label{expfinalmente}
\int_{\T^3} |D w_{ t}|^2\, dx \leq \mathrm{e}^{-c_0 t}\int_{\T^3} |D w_{E_0}|^2\, dx \leq \delta_0  \mathrm{e}^{-c_0 t}\,,
\eeq
which for $t=\bar t$ gives a contradiction to \eqref{step3a}.

\vspace{5pt}
{\it Step 3-(b).}  Assume that 
\beq\label{step3b}
\|\psi_{\bar t}\|_{C^1(\pa F)}=\e_0\,.
\eeq
Recalling \eqref{D(E)0} and denoting by $X_t$ the velocity field of the flow (see Definition~\ref{def:admissibleX}), we may compute
\[
\begin{split}
\frac{d}{dt}D(E_t)&=\frac{d}{dt}\int_{E_t} d_F\, dx= \int_{E_t}\Div(d_F X_t)\, dx\\
&= \int_{\pa E_t}d_F(X_t\cdot\nu_t)\, d\Ha^2= \int_{\pa E_t}d_F [\pa_{\nu_t}w_t]\, d\Ha^2\\
&=- \int_{\T^3}Dh\cdot Dw_t\, dx\,,
\end{split}
\]
where $h$ denotes the harmonic extension of $d_F$ to $\T^3$. Note that
$$
\|Dh\|_{L^2(\T^3)}\leq C\|d_F\|_{C^1(\pa E_t)}\leq C\,.
$$
Thus, also by \eqref{expfinalmente}, we have
\[
\frac{d}{dt} D(E_t) \leq C  \|Dw_t\|_{L^2(\T^3)}\leq C\sqrt{\de_0}\mathrm{e}^{-\frac{c_0}2t}
\]
for all $t \leq \bar t$. By integrating over $(0, \bar t)$ and recalling \eqref{D(E)} we get
\beq\label{step33}
\|\psi_{\bar t}\|_{L^2(\pa F)}\leq C\sqrt{D(E_{\bar t})}\leq C\sqrt{D(E_0)+C\sqrt{\de_0}}\leq C\sqrt[4]{\de_0}\,,
\eeq
provided that $\de_0$ is small enough. Since by \eqref{Tprimo} and \eqref{de02} we also have uniform $W^{2,3}$-bounds on 
$\psi_{\bar t}$, by standard interpolation we infer from \eqref{step33} that $\|\psi_{\bar t}\|_{C^1(\pa F)}\leq C\de_0^{\theta}$ for a suitable $\theta\in (0,1)$. Thus if $\de_0$ is small enough we reach a contradiction to \eqref{step3b}.

The combination of Step 3-(a) (see also \eqref{expfinalmente}) and Step 3-(b) yields $\bar t=T(E_0)$ and 
\beq\label{finaldecay}
\|\psi_t\|_{C^1(\pa F)}<\e_0\text{ and }\int_{\T^3} |D w_{t}|^2\, dx \leq \mathrm{e}^{-c_0 t}\int_{\T^3} |D w_{E_0}|^2\, dx \quad\text{ for all $t\in (0, T(E_0))$.}
\eeq

\vspace{5pt}
\noindent {\bf Step 4.}{\it (Global-in-time existence)} Here we show that, by taking $\de_0$ smaller if needed, we have $T(E_0)=+\infty$, that is the classical solution exists for all times. To this aim, recall that by \eqref{quasi} and the fact that $\bar t=T(E_0)$ we have
$$
\frac{d}{dt} \left(\frac{1}{2} \int_{\T^3} |D w_t|^2\, dx \right)+\frac{\sigma_{{\eps}}}{2} \|[\pa_{\nu_t}w_t] \|_{H^1(\pa E)}^2\leq 0
$$
for all $t\in (0, T(E_0))$. Assume now by contradiction $T(E_0)<+\infty$. Integrating over $\left(T(E_0)-\frac{T_0}2,T(E_0)-\frac{T_0}4 \right)$, where $T_0$ is as in  
\eqref{dalbasso}, we obtain
\[
\begin{split}
\sigma_{{\eps}}\int_{T(E_0)-\frac{T_0}2}^{T(E_0)-\frac{T_0}4}\|[\pa_{\nu_t}w_t] \|_{H^1(\pa E_t)}^2\, dt&\leq 
\int_{\T^3} |D w_{T(E_0)-\frac{T_0}2}|^2\, dx- \int_{\T^3} |D w_{T(E_0)-\frac{T_0}4}|^2\, dx\\
&\leq \de_0\,,
\end{split}
\]
where the last inequality  follows from \eqref{finaldecay} and \eqref{initial}.
Thus, by the mean value theorem there exists $\hat t\in \left(T(E_0)-\frac{T_0}2,T(E_0)-\frac{T_0}4 \right)$ such that
 $\|[\pa_{\nu_{\hat t}}w_{\hat t}] \|_{H^1(\pa E_t)}^2\leq \frac{8\de_0}{T_0\sigma_\e}$. Since $H^1(\pa E_{\hat t})$ embeds into $L^p(\pa E_{\hat t})$ for all $p>1$, by Proposition~\ref{harmonic estimates} we in turn infer that 
 $$
 [H_{\hat t}]^2_{C^{0,\alpha}(\pa E_{\hat t})}\leq C [w_{\hat t}]^2_{C^{0,\alpha}(\pa E_{\hat t})}\leq C\frac{\de_0}{T_0\sigma_\e}\,,
 $$
 where $[\cdot ]_{C^{0,\alpha}(\pa E_{\hat t})}$ stands for the $\alpha$-H\"older seminorm on $\pa E_{\hat t}$. Thus, if we choose 
 $\de_0$ sufficiently small, the above inequality together with \eqref{Tprimo} ensures that 
 $E_{\hat t}\in \mathfrak{h}^{2,\alpha}_M(F, U)$. In turn, by \eqref{dalbasso} the   time span   of existence of the classical solution starting from  $E_{\hat t}$ is at least $T_0$, which means that $(E_t)_t$ can be continued beyond $T(E_0)$. This is clearly a contradiction.

\vspace{5pt}
\noindent {\bf Step 5.}{\it (Convergence, up to subsequences, to a translate of $F$)} Let $t_n\to +\infty$. Then by \eqref{finaldecay} the sets $E_{t_n}$ satisfy the hypotheses of Lemma~\ref{w52conv}. Thus, up to a (not relabeled) subsequence we have that there exists a critical set $F'\in \mathfrak{C}^1_M(F, U)$ such that $E_{t_n}\to F'$ in $W^{\frac52,2}$. Due to \eqref{de02} and \eqref{de05} we also have $\|\psi_{F'}\|_{W^{2,3}(\pa F)}\leq \delta_1$.  But then \eqref{de04} implies that $F'=F+\sigma$ for a suitable (small) $\sigma\in \R^3$.

\vspace{5pt}
\noindent {\bf Step 6.}{\it (Exponential convergence of the full sequence)} Consider  now the $L^2$-distance of $\pa E_t$ from 
$\pa F+\sigma$:
$$
D_\sigma(E):=\int_{E\Delta (F+\sigma)}\mathrm{dist\,}(x, \pa F+\sigma)\, dx\,.
$$
The very same calculations performed in Step 3-(b) show that 
\beq\label{step6}
\frac{d}{dt} D_\sigma(E_t) \leq C  \|Dw_t\|_{L^2(\T^3)}\leq C\sqrt{\de_0}\mathrm{e}^{-\frac{c_0}2t}
\eeq
for all $t>0$. From this inequality it is easy to deduce that $\lim_{t\to +\infty} D_\sigma(E_t)$ exists. Thus, by the previous step  
$D_\sigma(E_t)\to 0$ as $t\to +\infty$. In turn, integrating \eqref{step6} and writing $\pa E_t=\{x+\psi_{\sigma, t}(x)\nu_{F+\sigma}(x): x\in \pa F+\sigma\}$ we get
\beq\label{step61}
\|\psi_{\sigma, t}\|_{L^2(\pa F+\sigma)}^2\leq C D_\sigma(E_t)\leq\int_t^{+\infty}C\sqrt{\de_0}\mathrm{e}^{-\frac{c_0}2s}\, ds\leq 
C\sqrt{\de_0}\mathrm{e}^{-\frac{c_0}2t}\,.
\eeq
Since by the previous steps $\|\psi_{\sigma, t}\|_{W^{2,3}(\pa F+\sigma)}$ is bounded, we infer from \eqref{step61} and standard interpolation estimates that also $\|\psi_{\sigma, t}\|_{C^{1,\beta}(\pa F+\sigma)}$ decays exponentially for $\beta\in (0, \frac13)$.
For all $\beta \in (0,1)$ setting $p = \frac{2}{1 -\beta}$ we have by \eqref{step61} and by \eqref{D(E)}
\beq\label{vdecay}
\begin{split}
\|v_t-v_{F+\sigma}\|_{C^{1,\beta}(\T^3)} &\leq C \|v_t-v_{F+\sigma}\|_{W^{2,p}(\T^3)} \leq C \|u_t-u_{F+\sigma}\|_{L^{p}(\T^3)} \\
&\leq C |E_t \Delta (F+\sigma)|^{\frac{1}{p}}\leq C \|\psi_{\sigma, t}\|_{L^2(\pa F+\sigma)}^{\frac{1}{p}}\\
&\leq C\de_0^{\frac{1}{4p}}\mathrm{e}^{-\frac{c_0}{4p}t}
\end{split}
\eeq
 for all  $\beta\in (0,1)$. Denote the average of $w_t$ on $\pa E_t$ by $\bar w_t$. Since by \eqref{finaldecay} we have that 
$$
\|w_t\big(\cdot + \psi_{\sigma, t}(\cdot)\nu_{F+\sigma}(\cdot)\big)-\bar w_t\|_{H^{\frac12}(\pa F+\sigma)}\leq C
\|w_t-\bar w_t\|_{H^{\frac12}(\pa E_t)} \leq C\|Dw_t\|_{L^2(\T^3)}\leq C\sqrt{\de_0}\mathrm{e}^{-\frac{c_0}2t}\,,
$$
it follows (taking into account also \eqref{vdecay}) that 
\begin{multline}\label{quasiHdecay}
\bigl\|\big[H_t\big(\cdot + \psi_{\sigma, t}(\cdot)\nu_{F+\sigma}(\cdot)\big)-\overline H_t\big]\\
-[H_{\pa F+\sigma}-\overline H_{\pa F+\sigma}]\bigr\|_{H^{\frac12}(\pa F+\sigma)}\to0 \quad\text{exponentially fast,}
\end{multline}
 where $\overline H_t$ and  $\overline H_{\pa F+\sigma}$ stand for the  average of $H_t$ on $\pa E_t$ and of $H_{\pa F+\sigma}$ on $\pa F+\sigma$, respectively.
 Let $d_\sigma$ be the signed distance function from $F+\sigma$ and let $\Psi_t$  denote a diffeomorphism such that $\Psi_t(F+\sigma)=E_t$. Clearly we can find  such a diffeomorphism with the additional property that 
 $\Psi_t(x)=x+\psi_{\sigma, t}(x)\nu_{F+\sigma}(x)$ on $\pa F+\sigma$ and $\|\Psi_t-Id\|_{C^1(\T^3)}\leq C \|\psi_{\sigma, t}\|_{C^{1}(\pa F+\sigma)}$.   Then, denoting  the tangential divergence on $\pa E_t$ by $\Div_{\tau_t}$ and the tangential Jacobian of $\Psi_t$
by $J_\tau\Psi_t$, we have
 \beq\label{quasiquasi}
 \begin{split}
 \biggl|\int_{\pa E_t}H_t\nabla d_\sigma\cdot\nu_t\, d\Ha^2& -\int_{\pa F+\sigma}H_{\pa F+\sigma}\, d\Ha^2\biggr|\vphantom{\Biggl|}\\
 &  =\biggl|\int_{\pa E_t}\Div_{\tau_t}\nabla d_{\sigma}\, d\Ha^2-\int_{\pa F+\sigma}\Div_\tau \nabla d_{\sigma}\, d\Ha^2\biggr|\vphantom{\Biggl|}\\
 &\leq \biggl|\int_{\pa F+\sigma}\bigl(\Div_{\tau_t}\nabla d_{\sigma}\circ\Psi_t J_\tau\Psi_t-\Div_\tau \nabla d_{\sigma}\bigr)\, d\Ha^2\biggr|\vphantom{\Biggl|}\\
 &\leq C \|\psi_{\sigma, t}\|_{C^{1}(\pa F+\sigma)}\,,
 \end{split}
 \eeq
 where the constant $C$ also depends on the $C^2$-bounds on $\pa F$.  Moreover, 
 \beq\label{quasiquasi2}
 \begin{split}
 \biggl|\int_{\pa E_t}(H_t\nabla d_\sigma\cdot\nu_t-H_t)\, d\Ha^2\biggr|& =
  \biggl|\int_{\pa E_t}H_t(\nabla d_\sigma-\nu_t)\cdot\nu_t\, d\Ha^2\biggr|\vphantom{\Biggl|}\\
  &\leq \|H_t\|_{L^1(\pa E_t)}\|\nabla d_\sigma-\nu_t\|_{L^{\infty}(\pa E_t)}\leq C  \|\psi_{\sigma, t}\|_{C^{1}(\pa F+\sigma)}\,,
 \end{split} 
 \eeq
 where we have also used the uniform bounds on $H_t$ established in the previous steps.  Combining \eqref{quasiquasi} and 
 \eqref{quasiquasi2}, we get that  $\overline H_{t}-\overline H_{\pa F+\sigma}$ decays exponentially and in turn, thanks to 
 \eqref{quasiHdecay}
 $$
 \bigl\|H_t\big(\cdot + \psi_{\sigma, t}(\cdot)\nu_{F+\sigma}(\cdot)\big) -H_{\pa F+\sigma}\bigr\|_{H^{\frac12}(\pa F+\sigma)}\to0 \quad\text{exponentially fast.}
 $$
 The conclusion follows arguing  as in  the end of the proof  of Lemma~\ref{w52conv}. 
\end{proof}

Theorem~\ref{main thm 1} can be readily extended to the Neumann case, at least when the stable critical set $F$ is well contained in $\Omega$. Recall in this case the energy \eqref{J} must be replaced with
$$
J_N(E):=P_\Omega(E)+\gamma\int_{\Omega}|\nabla v_E|^2\, dx,
$$
where $P_\Omega(E)$ denotes the perimeter of $E$ inside $\Omega$ and the function $v_E$ is the solution of
$$\begin{cases}
-\Delta v_E=u_E-m\quad \text{ in }\Omega\\
 \displaystyle \int_\Omega v_E\,dx=0\;,\quad \frac{\pa v_E}{\pa\nu}=0\;,\quad \text{on } \pa\Omega \;.\end{cases}$$
 Here $u_E=2\chi_E-1$ and $m=\medintinrigo_\Om u_E\,dx$.
As in \eqref{vEper} we have
$$
v_E(x)=\int_\Omega G(x,y)u_E(y)\, dy\,,$$
where $G$ is the solution of
$$\begin{cases}
-\Delta_y G(x,y)=\delta_x -\frac1{|\Om|}\quad \text{ in }\Omega\\
 \displaystyle \int_\Omega G(x,y)\,dy=0\;,\quad \nabla_y G(x,y)\cdot \nu(y)=0\;,\quad \text{if } y\in\pa\Omega \;.\end{cases}$$

As in the periodic case, we say that a smooth subset $F\subset\!\subset\Om$ is a {\it critical set} for the functional $J_N$ if there exists a constant $\lambda\in\R$ such that
 $$
 H_{\pa F}(x)+4\gamma v_F(x)=\lambda \qquad\text{for all $x\in\pa F$.}
 $$
The quadratic form associated with the second variation $\partial^2J_N(E)$ is also defined as in \eqref{J2}. If $F\subset\!\subset\Om$  is a smooth local minimizer of $J_N$ under volume constraint, then it is also critical and  $\partial^2J_N(E)[\varphi]\geq0$ for all $\varphi\in \widetilde H(\pa F)$.

Note that, unlike in the periodic case, the functional $J_N$ is not translation invariant. Therefore we say that a smooth critical set $F$ is {\it strictly stable} if 
$$
\pa^2J_N(E)[\vphi]>0\qquad\text{for all $\vphi\in \Ht(\pa E)\setminus \{0\}$. }
$$
With these definitions in hand we can state the following counterpart of Theorem~\ref{main thm 1}.
\begin{theorem}\label{mainN}
Let $\Om$ be an open set in $\R^3$ and let $F\subset\!\subset\Om$ be a smooth strictly stable critical set  and  $U$  a regular tubular neighborhood of $\pa F$. Then, for every  $M>0$ and $\alpha\in (0,1)$ there exists $\de_0>0$ with the following property: Let $E_0\in \mathfrak{h}^{2,\alpha}_M(F, U)$ be  such that 
$$
|E_0|=|F|\,, \qquad |E_0\Delta F|\leq \de_0\,, \qquad \text{and}\qquad \int_{\Omega}|Dw_{E_0}|^2\, dx\leq\de_0\,.
$$
 Then,   the unique classical solution $(E_t)_t$ to  the Mullins-Sekerka flow \eqref{MSintro} with initial datum $E_0$ is  defined  for all $t>0$. Moreover,  $E_t\to F$ in 
 $W^{5/2,2}$ exponentially fast as $t\to +\infty$.
\end{theorem}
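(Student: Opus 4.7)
The plan is to follow the scheme developed for Theorem~\ref{main thm 1}, which carries over to the Neumann setting with essential simplifications arising from the lack of translation invariance. First I would establish Neumann-analogues of the technical results used in the periodic case: an energy identity of the form \eqref{der of J}--\eqref{der of dw}, where the harmonic function $w_t$ now solves $\Delta w_t=0$ in $\Omega\setminus\pa E_t$ with homogeneous Neumann conditions on $\pa\Omega$ and $w_t=H_t+4\gamma v_t$ on $\pa E_t$; the boundary estimates of Proposition~\ref{harmonic estimates} for this Neumann-type transmission problem (using that $\pa E_t\subset\!\subset\Omega$, so the boundary of $\Omega$ stays a positive distance away and standard elliptic regularity with Neumann data applies on an interior neighborhood of $\pa E_t$); and the compactness Lemma~\ref{w52conv} with $G$ replaced by the Neumann Green's function. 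The non-existence of nearby critical sets (Proposition~\ref{prop:nocrit}) gives an even stronger conclusion here: since $J_N$ is not translation-invariant, there is a $W^{2,p}$-neighborhood of $F$ containing no other smooth critical set with $|F'|=|F|$, so $F'=F$ outright (no translation vector $\sigma$ to track).

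The decisive simplification, and the reason the proof becomes shorter, is the coercivity of $\pa^2J_N(F)$ on the whole of $\widetilde H(\pa F)$. Indeed, by strict stability and the compactness of the unit ball in $H^1$ one obtains the analogue of Lemma~\ref{from AFM} without the translation-exclusion hypothesis: there exist $\sigma_0>0$ and $\delta_1>0$ such that
\[
\pa^2 J_N(E)[\vphi]\geq \sigma_0 \|\vphi\|_{H^1(\pa E)}^2 \qquad \text{for every } \vphi\in \widetilde H(\pa E),
\]
whenever $E$ is $\delta_1$-close to $F$ in $W^{2,p}$. Consequently Step~2 of the proof of Theorem~\ref{main thm 1}, where one must rule out that the velocity $[\pa_{\nu_t}w_t]$ is almost an infinitesimal translation, is not needed at all: one applies the above inequality directly with $\vphi=[\pa_{\nu_t}w_t]$ (which has zero mean on $\pa E_t$ by the divergence theorem and the harmonicity of $w_t$).

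With this in hand the stopping-time argument runs exactly as in the periodic case. Define $\bar t\leq T(E_0)$ as the largest time with $\|\psi_t\|_{C^1(\pa F)}<\eps_0$ and $\int_\Omega|Dw_t|^2\,dx<2\delta_0$ on $(0,\bar t)$. Combining the Neumann energy identity with the coercivity above and the cubic remainder estimate coming from Proposition~\ref{harmonic estimates}-(iii) and (v), one gets
\[
\frac{d}{dt}\Bigl(\tfrac{1}{2}\int_\Omega|Dw_t|^2\,dx\Bigr)\leq -\tfrac{\sigma_0}{2}\|[\pa_{\nu_t}w_t]\|_{H^1(\pa E_t)}^2\leq -c_0\int_\Omega|Dw_t|^2\,dx,
\]
so $\int_\Omega|Dw_t|^2\,dx\leq \delta_0 e^{-c_0 t}$, contradicting $\int_\Omega|Dw_{\bar t}|^2\,dx=2\delta_0$. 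The other alternative $\|\psi_{\bar t}\|_{C^1}=\eps_0$ is excluded by the very same interpolation argument as in Step~3-(b): bounding $\frac{d}{dt}D(E_t)$ by $C\|Dw_t\|_{L^2(\Omega)}$ via the Neumann harmonic extension of $d_F$ and using the uniform $W^{2,3}$-bound from the compactness lemma. Global-in-time existence then follows by the continuation argument of Step~4, extracting a time $\hat t$ at which the curvature is $C^{0,\alpha}$-close to that of $F$ and invoking the local existence theorem (which applies verbatim in the Neumann case by \cite{EscNis}).

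For the final convergence statement I would follow Steps~5 and 6 of the periodic proof, using the compactness lemma along any sequence $t_n\to\infty$ to produce a limit critical set $F'$ with $|F'|=|F|$ and $F'$ in a $W^{2,3}$-neighborhood of $F$; by the Neumann version of Proposition~\ref{prop:nocrit} we must have $F'=F$. The full (not merely subsequential) exponential convergence in $W^{5/2,2}$ is then obtained exactly as in \eqref{step6}--\eqref{quasiquasi2}, with $\sigma=0$ throughout. The main technical obstacle is verifying that the harmonic/boundary estimates of Proposition~\ref{harmonic estimates} and the compactness Lemma~\ref{w52conv} transfer to the Neumann transmission problem with the Neumann Green's function; but since $\pa E_t\subset\!\subset\Omega$ these are essentially interior estimates and follow from the same arguments as in the torus case, with no new difficulties.
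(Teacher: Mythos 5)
Your proposal matches the paper's own argument: the proof proceeds exactly as in Theorem~\ref{main thm 1}, with the simplifications you identify — Step~2 (controlling the translational component) is dropped because $J_N$ is not translation invariant, Lemma~\ref{from AFM} now yields coercivity on all of $\widetilde H(\pa E)$, and the Neumann analogue of Proposition~\ref{prop:nocrit} forces $F'=F$ rather than $F'=F+\sigma$. Your observation that the boundary/harmonic estimates and compactness lemma transfer because $\pa E_t\subset\!\subset\Om$ keeps everything interior is precisely the point the paper leaves implicit.
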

The proof of this result is similar to the one of Theorem~\ref{main thm 1}. Actually it is simpler since we do not need the argument used in Step 2, where we controlled the translational component of the flow. Note that in the statement of Lemma~\ref{from AFM}  now \eqref{from AFM1}  holds for all $\varphi\in\widetilde H(\pa E)$.  Finally, observe that under the assumptions of Proposition~\ref{prop:nocrit} we may conclude that $F^\prime=F$, i.e., that there are no other critical sets close to $F$.

The assumption that $F$ does not touch the boundary may seem restrictive. However we remark that in two and three dimensions there are examples of strictly stable critical sets  which consist of either a single or multiple almost spherical sets well contained in $\Omega$. The precise conditions on the parameters $m$, $\gamma$ and $|\Omega|$ under which these strictly stable sets exist are given in
\cite{RW1, RW2, RW3}. Other examples of local minimizers well contained in $\Omega$ are given in~\cite{CS}.

\section{Nonlinear stability for the surface diffusion flow}\label{sec:SD}

Throughout the section  we assume $\gamma=0$ in \eqref{J}, so that we will be dealing only with the standard local perimeter. 
We will show how to adapt the strategy devised in the previous one to the case of the surface diffusion equation. 
For the definition of sets of class $h^{2,\alpha}$ we refer to the previous section.

\begin{definition}[Surface diffusion flows]\label{def:SDsol}
 Let $E_0\subset\T^3$ be of class $h^{2,\alpha}$ for some $\alpha\in (0,1)$. We say that  the one-parameter family $(E_t)_{t\in (0, T)}$ is a  {\em classical solution} to the {\em surface diffusion equation}   on the interval $(0, T)$ with initial datum $E_0$ if it is a smooth flow in the sense of  Definition~\ref{def:smoothflow}, $E_t\to E_0$ in $C^{2,\alpha}$ as $t\to 0^+$, and  the following evolution law holds:
\begin{equation}\label{SD}
V_t= \Delta_\tau H_t \quad\text{on } \, \partial E_t\text{ for all }t\in (0, T)\,,
\end{equation}
where, as usual,  $V_t$ stands for the outer normal velocity of the moving boundary $\pa E_t$, $H_t$ stands for $H_{\pa E_t}$ and $\Delta_\tau$ is the Laplace-Beltrami operator on $\pa E_t$.  
\end{definition}
It is well-known that the surface diffusion flow is volume preserving. This can be straightforwardly checked by the following computation:
$$
\frac{d}{dt}|E_t|=\int_{\pa E_t}V_t\, d\Ha^{2}=\int_{\pa E_t}\Delta_\tau H_t\, d\Ha^{2}=0\,. 
$$
 The following local-in-time  existence and uniqueness result has been established in  \cite{EMS}\footnote{In fact \cite{EMS} deals with the evolution in the whole space $\R^N$, but it is clear that the same arguments go through in the periodic case.}. We make use of the notation introduced in the previous section.
 
\begin{theorem}[Local-in-time  existence and uniqueness, \cite{EMS}]\label{th:EMS}
 Let $F_0\subset\T^3$ be a smooth set and   $U$ a regular tubular neighborhood of $\pa F_0$.  Then, for every  $M>0$ and $\alpha\in (0,1)$ there exists $T>0$ with the following property:  For every $E_0\in \mathfrak{h}^{2,\alpha}_M(F_0, U)$
there exists a unique classical solution  to the surface diffusion flow  in $(0, T)$  with initial datum $E_0$.
 \end{theorem}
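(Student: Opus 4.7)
The plan is to reduce the fourth-order geometric law $V_t=\Delta_\tau H_t$ to a scalar quasilinear parabolic equation on the fixed reference surface $\partial F_0$, and then invoke the standard analytic semigroup/contraction framework in the scale of little Hölder spaces $h^{k,\alpha}$. Since $U$ is a regular tubular neighborhood, each $E\in \mathfrak{h}^{2,\alpha}_M(F_0,U)$ is written as a normal graph $\partial E=\{x+\psi_E(x)\nu_{F_0}(x):x\in \partial F_0\}$ with $\|\psi_E\|_{h^{2,\alpha}(\partial F_0)}\le M$. Parametrizing the unknown $\partial E_t$ in the same way via a time-dependent height $\psi(\cdot,t):\partial F_0\to\R$, a direct (if tedious) computation in local coordinates rewrites $V_t=\Delta_\tau H_t$ as a scalar evolution
$$
\partial_t\psi=\mathcal{F}(\psi),\qquad \psi(\cdot,0)=\psi_{E_0},
$$
where $\mathcal{F}$ is a nonlinear fourth-order differential operator whose coefficients depend smoothly on $(\psi,D_\tau\psi,D_\tau^2\psi,D_\tau^3\psi)$ as long as $\psi$ stays small in $C^1$.

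Next I would linearize $\mathcal{F}$ at $\psi_{E_0}$: its principal part is of the form $-a(x)\Delta_{\partial F_0}^2$ with $a$ a smooth positive function, a uniformly elliptic self-adjoint fourth-order operator. By the classical Da~Prato--Grisvard theory, such an operator is sectorial on the scale $\{h^{k,\alpha}(\partial F_0)\}$ and generates an analytic semigroup; the choice of \emph{little} Hölder spaces is crucial because the density of smooth functions (which fails in ordinary $C^{k,\alpha}$) is needed in order for the continuous interpolation spaces to be identified with the standard Hölder scales, and hence for the optimal regularity theory to apply. Writing $\psi=\psi_{E_0}+\varphi$ and decomposing $\mathcal{F}(\psi_{E_0}+\varphi)=A\varphi+\mathcal{N}(\varphi)$, where $\mathcal{N}$ is a higher-order perturbation vanishing to second order at $\varphi=0$, the maximal regularity for $\partial_t-A$ on a parabolic space of the form $X_T\subset C([0,T];h^{2,\alpha})\cap C^{\alpha/4}([0,T];h^{\alpha})$ turns the map $\varphi\mapsto(\partial_t-A)^{-1}\mathcal{N}(\varphi)$ into a contraction on a small ball of $X_T$ for $T$ sufficiently small, yielding local-in-time existence and uniqueness by the Banach fixed point theorem.

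The uniform lower bound on the existence time $T$ depending only on $M$ and $\alpha$ follows because the constants in the maximal regularity and Hölder-interpolation estimates depend continuously on the coefficients of $A$, which in turn are controlled by $\|\psi_{E_0}\|_{h^{2,\alpha}(\partial F_0)}\le M$; thus the contraction radius can be chosen uniformly across the bounded set $\mathfrak{h}^{2,\alpha}_M(F_0,U)$. The main obstacle is precisely the verification of sectoriality and the derivation of sharp maximal regularity estimates in the non-Hilbert setting of the little Hölder spaces, together with a careful tracking of the continuous dependence of the constants on the coefficients; this is exactly the content of \cite{EMS}, to which the authors defer the proof.
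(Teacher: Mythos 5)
The paper itself gives \emph{no proof} of this statement: it is imported verbatim from Escher--Mayer--Simonett, with the short footnote that the arguments of that reference (stated for $\R^N$) carry over unchanged to the periodic setting. So there is no ``paper's own proof'' to compare against, and you are correctly identifying the situation when you write at the end that ``this is exactly the content of [EMS], to which the authors defer the proof.'' Your sketch does accurately reflect the strategy of \cite{EMS}: pass to the height function over the reference surface, obtain a scalar quasilinear fourth-order parabolic equation, verify that the frozen-coefficient principal part is a sectorial operator on the little H\"older scale (little H\"older rather than ordinary $C^{k,\alpha}$ precisely so that the continuous interpolation spaces are the right ones), invoke maximal regularity and a fixed-point/Amann-type quasilinear argument, and observe that all constants depend only on the $h^{2,\alpha}$-bound $M$ on the initial height, giving a uniform existence time over $\mathfrak{h}^{2,\alpha}_M(F_0,U)$.

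Two small inaccuracies worth noting, neither of which affects the conclusion. First, after writing $\psi = \psi_{E_0} + \varphi$ you say that $\mathcal{N}$ ``vanishes to second order at $\varphi=0$''; this cannot be literally true unless $E_0$ is already an equilibrium, since $\mathcal{F}(\psi_{E_0})\neq 0$ in general. The correct decomposition is $\mathcal{F}(\psi_{E_0}+\varphi) = \mathcal{F}(\psi_{E_0}) + A\varphi + \mathcal{N}_2(\varphi)$ with $\mathcal{N}_2$ quadratic; the fixed-point scheme then has a constant source term, which is harmless. Second, the frozen-coefficient operator $A$ need not be self-adjoint when $E_0$ is a general $h^{2,\alpha}$-set --- only the fourth-order principal part has a symmetric structure; the first- and third-order lower-order terms coming from the curvature of $\partial F_0$ and from $D_\tau\psi_{E_0}$ destroy self-adjointness. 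Sectoriality in the little H\"older scale does not require self-adjointness, so again the overall argument is unaffected, but the statement as written is not accurate.
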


As before we are interested in the asymptotic stability of strictly stable configurations.
The main result  of the section is the following.
\begin{theorem}[Main result] \label{main thm 2}
Let $F\subset\T^3$ be a strictly stable critical set according to Definition~\ref{def:stability+} and let $U$ be a regular tubular neighborhood of $\pa F$. Then, for every  $M>0$ and $\alpha\in (0,1)$ there exists $\de_0>0$ with the following property: Let $E_0\in \mathfrak{h}^{2,\alpha}_M(F, U)$ be of class $W^{3,2}$  such that 
$$
|E_0|=|F|\,, \qquad |E_0\Delta F|\leq \de_0\,, \qquad \text{and}\qquad \int_{\pa E_0}|D_\tau H_{\pa E_0}|^2\, d\Ha^2\leq\de_0\,.
$$
 Then,   the unique classical solution $(E_t)_t$ to  the surface diffusion flow with initial datum $E_0$ is  defined  for all $t>0$. 
 Moreover,   $E_t\to F+\sigma$ in 
 $W^{3,2}$ as $t\to +\infty$, for some $\sigma\in \R^3$. The convergence is exponentially fast; more precisely, 
 there exist $\eta$, $c_F>0$  such that for all $t>0$, writing 
\[
\pa E_t=\{x+\psi_{\sigma, t}(x)\nu_{F+\sigma}(x):\, x\in \pa F+\sigma \}\,,
\]
we have 
$$
\|\psi_{\sigma, t}\|_{W^{3,2}(\pa F+\sigma)}\leq \eta\mathrm{e^{-c_Ft}}\,.
$$
Both $|\sigma|$ and $\eta$ vanish as $\de_0\to 0^+$.  
\end{theorem}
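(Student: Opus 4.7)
The plan is to mimic, mutatis mutandis, the strategy of Theorem~\ref{main thm 1}, with the geometric dissipation $\mathcal{E}(E_t) := \int_{\pa E_t} |D_\tau H_t|^2\, d\Ha^{2}$ playing the role of $\int_{\T^3}|Dw_t|^2\, dx$. Granted Lemma~\ref{calculationsbis} (the surface-diffusion analogue of Lemma~\ref{calculations}), I would use the identities
\[
\frac{d}{dt} J(E_t) = -\mathcal{E}(E_t), \qquad \frac{d}{dt}\mathcal{E}(E_t) = -2\,\pa^2 J(E_t)[\Delta_\tau H_t] + R(E_t),
\]
with $R(E_t)$ a geometric remainder built from lower-order quantities. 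I would then fix small $\eps_0, \delta_0>0$ and introduce a stopping time $\bar t \leq T(E_0)$ as the supremum of those $t$ for which $\|\psi_t\|_{C^1(\pa F)}<\eps_0$ and $\mathcal{E}(E_t)<2\delta_0$.

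First, I would control the translational component of the velocity $V_t = \Delta_\tau H_t$ in the spirit of Step~2 of Theorem~\ref{main thm 1}. Decomposing $\Delta_\tau H_t = \eta_t\cdot \nu_t + g$ with $g$ orthogonal to $\Pi_F$ in $L^2(\pa E_t)$ and using the translation-invariance identity $\int_{\pa E_t} H_t (\eta_t\cdot \nu_t)\, d\Ha^{2} = 0$ (which follows from \eqref{eq:J'}), one obtains
\[
\mathcal{E}(E_t) = -\int_{\pa E_t} H_t\,\Delta_\tau H_t\, d\Ha^{2} = -\int_{\pa E_t} (H_t - \bar H_t)\, g\, d\Ha^{2}.
\]
Combined with the Poincar\'e inequality $\|H_t - \bar H_t\|_{L^2(\pa E_t)} \leq C\|D_\tau H_t\|_{L^2(\pa E_t)}$ and a bootstrap estimating $|\eta_t|$ from $\|\eta_t\cdot\nu_t\|_{L^2}$ (via the uniform regularity of $\nu_t$), this yields the key lower bound $\min_{\eta \in \Pi_F} \|\Delta_\tau H_t - \eta\cdot\nu_t\|_{L^2(\pa E_t)} \geq \eps\,\|\Delta_\tau H_t\|_{L^2(\pa E_t)}$ for some uniform $\eps>0$. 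Lemma~\ref{from AFM} then delivers $\pa^2 J(E_t)[\Delta_\tau H_t] \geq \sigma_\eps\,\|\Delta_\tau H_t\|_{H^1(\pa E_t)}^2$, which in turn dominates $\mathcal{E}(E_t)$ through the elementary inequality $\|D_\tau H_t\|_{L^2(\pa E_t)} \leq C\|\Delta_\tau H_t\|_{L^2(\pa E_t)}$ (from one integration by parts plus Poincar\'e on $\pa E_t$).

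The heart of the argument, and the only genuinely new ingredient relative to the Mullins-Sekerka case, is the remainder estimate $|R(E_t)| \leq \omega(\eps_0, \delta_0)\,\|\Delta_\tau H_t\|_{H^1(\pa E_t)}^2$ with $\omega(\eps_0, \delta_0) \to 0$ as $\eps_0, \delta_0 \to 0^+$. I expect this to be the main obstacle: it is precisely where the announced geometric interpolation inequality of Lemma~\ref{nasty}, and with it the dimension restriction $N \leq 3$, must be invoked. Granted this bound, combining the previous estimates yields the differential inequality $\frac{d}{dt}\mathcal{E}(E_t) \leq -c_0\,\mathcal{E}(E_t)$ on $(0,\bar t)$, hence $\mathcal{E}(E_t) \leq \delta_0\, e^{-c_0 t}$. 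Integrating the evolution of $\int_{E_t} d_F\, dx$ (as in Step~3-(b) of Theorem~\ref{main thm 1}) and interpolating against the uniform higher-regularity bounds encoded by $\mathcal{E}$ rules out the alternative $\|\psi_{\bar t}\|_{C^1(\pa F)}=\eps_0$, so $\bar t = T(E_0)$.

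Global-in-time existence then follows as in Step~4 of Theorem~\ref{main thm 1}: the exponential decay of $\mathcal{E}$ provides, at arbitrarily large times $\hat t$, enough H\"older control on $H_{\hat t}$ to reapply the local existence Theorem~\ref{th:EMS} and continue the flow. A compactness argument (the surface-diffusion analogue of Lemma~\ref{w52conv}) along a diverging sequence of times, together with Proposition~\ref{prop:nocrit}, produces a subsequential limit of the form $F+\sigma$. Finally, the quantitative decay of $D_\sigma(E_t) := \int_{E_t \Delta (F+\sigma)} \mathrm{dist}(x, \pa F+\sigma)\, dx$, deduced from $\big|\tfrac{d}{dt}D_\sigma(E_t)\big| \leq C\|\Delta_\tau H_t\|_{L^2(\pa E_t)} \leq C\sqrt{\delta_0}\,e^{-c_0 t/2}$, upgrades subsequential to full exponential convergence, first in $L^2$ and then, by interpolation with the uniform $W^{3,p}$-bounds along the flow, in $W^{3,2}$.
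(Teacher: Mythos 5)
Your proposal mirrors the paper's proof step by step: the same stopping-time argument, the same decomposition $\Delta_\tau H_t = \eta_t\cdot\nu_t + g$ to control the translational component, the same use of Lemma~\ref{nasty} to absorb the remainder, and the same continuation and compactness argument at the end. The only point you treat as routine that the paper must address separately is the Poincar\'e inequality $\|H_t-\overline H_t\|_{L^2(\pa E_t)} \le C\|D_\tau H_t\|_{L^2(\pa E_t)}$: since $\pa E_t$ may be disconnected (e.g.\ lamellae), this is not automatic and is supplied by the geometric Poincar\'e inequality of Lemma~\ref{lm:geopoinc}, which itself relies on the strict stability of~$F$.
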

As before, the proof of the theorem, which is  close in spirit to the proof of Theorem~\ref{main thm 1} is postponed until the end of the section. We first collect some auxiliary results, whose proofs are given in Section~\ref{sec:technicalproofs}.
\begin{lemma}[Energy identities] \label{calculationsbis} Let $(E_t)_{t\in (0, T)}$ be a smooth flow satisfying \eqref{SD}. The  following energy idienties hold:
\begin{equation}
\label{der of Jbis}
\frac{d}{dt} J(E_t) = -  \int_{\pa E_t} |D_\tau H_t|^2\, dx\,,
\end{equation}
and
\begin{equation}
\label{der of DH}
\begin{split}
\frac{d}{dt} \left(\frac{1}{2}\int_{\pa E_t} |D_\tau H_t|^2\, dx \right) = & -\pa^2J(E_t)\left[\Delta_\tau H_t\right]
-\int_{\partial E_t}  B_t \left [D_\tau H_t\right]  \Delta_\tau H_t \, d \Ha^{2}  \\
&+ \frac{1}{2}\int_{\partial E_t} H_t |D_\tau H_t|^2  \Delta_\tau H_t\, d \Ha^{2}\,,
\end{split}
\end{equation}
where $\pa^2J(E_t)$ is the quadratic form defined in \eqref{J2} (with $E_t$ in place of $E$ and with $\gamma=0$) and, as usual, the subscript $t$ stands for  ${E_t}$. Note also that we have used the notation $B_t[\cdot]$ to denote the second fundamental quadratic form on $\pa E_t$, which we recall is defined as $B_t[\tau]:=(D_\tau \nu_t \tau)\cdot\tau$ for all $\tau\in \R^3$.  
\end{lemma}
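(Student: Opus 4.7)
The plan is to treat the two identities separately. For \eqref{der of Jbis}, I would apply the first variation formula \eqref{eq:J'} (with $\gamma=0$) to the admissible family whose normal velocity at time $t$ is $V_t=\Delta_\tau H_t$; the volume-preserving requirement of Definition~\ref{def:admissibleX}-(i) is automatic because $\int_{\pa E_t}\Delta_\tau H_t\,d\Ha^{2}=0$ on the closed surface $\pa E_t$. This gives $\tfrac{d}{dt}J(E_t)=\int_{\pa E_t}H_t\Delta_\tau H_t\,d\Ha^{2}$, and a single integration by parts on the boundaryless manifold $\pa E_t$ produces \eqref{der of Jbis}.

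The identity \eqref{der of DH} is more delicate. The main tool is the transport formula for integrals over evolving hypersurfaces: if $f_t$ is a smooth scalar field on $\pa E_t$ moving with normal velocity $V_t\nu_t$, then
\[
\frac{d}{dt}\int_{\pa E_t}f_t\,d\Ha^{2}=\int_{\pa E_t}\bigl(\dot f_t+H_tV_tf_t\bigr)\,d\Ha^{2},
\]
where $\dot f_t$ denotes the time derivative of $f_t$ along a parametrization of purely normal velocity (a reduction to such a parametrization is legitimate since tangential reparametrizations preserve all the geometric quantities appearing in \eqref{der of DH}). Applying this to $f_t=|D_\tau H_t|^{2}=g^{ij}\pa_i H_t\,\pa_j H_t$ in local coordinates, and using the standard evolution formulas
\[
\dot g^{ij}=-2V_tB_t^{ij},\qquad \dot H_t=-\Delta_\tau V_t-|B_t|^{2}V_t,
\]
one finds
\[
\dot f_t=-2V_tB_t[D_\tau H_t]+2\,D_\tau H_t\cdot D_\tau\dot H_t.
\]
Substituting $V_t=\Delta_\tau H_t$ into $\dot H_t$ and integrating the second term by parts twice on the closed surface $\pa E_t$ turns it into
\[
\int_{\pa E_t}2\,D_\tau H_t\cdot D_\tau\dot H_t\,d\Ha^{2}=-2\int_{\pa E_t}|D_\tau\Delta_\tau H_t|^{2}\,d\Ha^{2}+2\int_{\pa E_t}|B_t|^{2}(\Delta_\tau H_t)^{2}\,d\Ha^{2},
\]
which by \eqref{J2} (with $\gamma=0$) is exactly $-2\pa^{2}J(E_t)[\Delta_\tau H_t]$. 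Assembling the three contributions produced by the transport formula (the $\pa^2 J$ piece, the remainder $\int B_t[D_\tau H_t]\,\Delta_\tau H_t\,d\Ha^{2}$ coming from $\dot g^{ij}$, and the $\int H_tV_t|D_\tau H_t|^{2}\,d\Ha^{2}$ term coming from the area element) and dividing by two yields \eqref{der of DH}.

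The main obstacle I anticipate is the careful bookkeeping of sign conventions in the evolution formulas for the inverse metric $g^{ij}$ and for the mean curvature $H_t$: a sign slip in $\dot g^{ij}$ would flip the sign of the $\int_{\pa E_t}B_t[D_\tau H_t]\,\Delta_\tau H_t\,d\Ha^{2}$ remainder term. A secondary technical point is to justify rigorously the reduction to a purely normal parametrization, which follows from the invariance of the geometric integrals involved under tangential diffeomorphisms and can be implemented, for instance, by composing the parametrization $\Psi_t$ of Definition~\ref{def:smoothflow} with a time-dependent family of tangential reparametrizations of $\pa F$.
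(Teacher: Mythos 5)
Your proof is correct, and it takes a genuinely different computational route from the paper's. The paper works entirely extrinsically: it extends $\nu_t,H_t,B_t$ to a tubular neighborhood via the signed distance function, differentiates pulled-back quantities such as $D_{\tau_{t+s}}H_{t+s}\circ\Phi_s=[I-\nu_{t+s}\circ\Phi_s\otimes\nu_{t+s}\circ\Phi_s]\,DH_{t+s}\circ\Phi_s$ directly, handles the area element through the tangential Jacobian, and then needs several integrations by parts to make a term involving $D_\tau|B_t|^2$ cancel before the expression recombines into $-\pa^2J(E_t)[\Delta_\tau H_t]$. You instead work intrinsically in local coordinates on the moving surface, invoke the standard evolution formulas $\dot g^{ij}=-2V_tB_t^{ij}$ and $\dot H_t=-\Delta_\tau V_t-|B_t|^2V_t$, and use the transport theorem $\frac{d}{dt}\int_{\pa E_t}f_t\,d\Ha^2=\int_{\pa E_t}(\dot f_t+H_tV_tf_t)\,d\Ha^2$; the cancellation that the paper produces by hand is already built into these formulas, so your derivation is shorter. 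What the paper's approach buys is self-containment and uniformity of method with Lemma~\ref{calculations}, since it derives the needed identities from the signed-distance machinery rather than quoting them; what yours buys is brevity and transparency, at the price of having to get the conventions right from the outset. On that score your signs do check out against the paper's conventions ($B_{ij}=\partial_i\nu\cdot\partial_jX$, $H=\Div\nu$, $\partial_\nu H=-|B|^2$): $\dot g_{ij}=2V_tB_{ij}$ gives $\dot g^{ij}=-2V_tB^{ij}$, and the material derivative of $H$ along a purely normal parametrization is $-\Delta_\tau V_t-|B_t|^2V_t$, consistent with the specialization of \eqref{CMM3} to a normal velocity field. The reduction to a purely normal parametrization that you flag is indeed legitimate and is also exploited in the paper's proof (see \eqref{normonly}).
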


\begin{lemma}[Interpolation  on boundaries]
\label{interpolation}
Let $F\subset\T^3$ be a smooth set,  $U$   a  regular tubular neighborhood  of $\pa F$, and   $M>0$, $p\in (2,+\infty)$ fixed constants.  
Then, there exists  $C>0$ with the following property:  for every $E
\in \mathfrak{C}^1_M(F, U)$ and   $f \in H^1(\partial E)$  it holds
\[
\|f\|_{L^p(\partial E)} \leq C \left( \|D_\tau f\|_{L^2(\pa E)}^{\theta} \|f\|_{L^2(\pa E)}^{1-\theta} + \|f\|_{L^2(\pa E)} \right)\,,
\]
with  $\theta: = 1- \frac{2}{p}$.  Moreover, the following Poincar\'e inequality holds
\[
\|f-\bar f\|_{L^p(\pa E)} \leq C \|D_\tau f\|_{L^2(\partial E)}, 
\]
where $\bar f$ denotes the piecewise constant function defined as $\medintinrigo_{\Gamma^i} f\, d\Ha^2$ on each connected component  $\Gamma^i$ of $\partial E$.
\end{lemma}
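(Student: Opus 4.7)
The strategy is to reduce the inequality on the moving boundary $\pa E$ to the classical Gagliardo-Nirenberg interpolation on the fixed smooth reference surface $\pa F$ via the canonical parametrization
$$
\Phi_E(x) := x + \psi_E(x)\nu_F(x), \qquad x\in \pa F.
$$
Because $\|\psi_E\|_{C^1(\pa F)}\leq M$, the map $\Phi_E\colon \pa F\to \pa E$ is a bi-Lipschitz diffeomorphism and its tangential Jacobian $J_\tau \Phi_E$ as well as the pullback metric $\Phi_E^\ast g_{\pa E}$ are bounded above and below by constants depending only on $F$ and $M$.  Consequently, for $\tilde f := f\circ \Phi_E$ the norms $\|f\|_{L^p(\pa E)}$, $\|D_\tau f\|_{L^2(\pa E)}$ are uniformly equivalent to $\|\tilde f\|_{L^p(\pa F)}$, $\|D_\tau \tilde f\|_{L^2(\pa F)}$.

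Once transplanted to the fixed smooth compact $2$-manifold $\pa F$, the first inequality reduces to the standard Gagliardo-Nirenberg estimate
$$
\|\tilde f\|_{L^p(\pa F)} \leq C\bigl(\|D_\tau \tilde f\|_{L^2(\pa F)}^{\theta}\|\tilde f\|_{L^2(\pa F)}^{1-\theta} + \|\tilde f\|_{L^2(\pa F)}\bigr), \qquad \theta = 1 - \tfrac{2}{p},
$$
valid for every $p\in(2,\infty)$ since $\pa F$ is a compact surface of dimension $2$; the exponent $\theta$ is dictated by scaling and this is precisely where the restriction $N=3$ (so $\dim \pa F = 2$) enters. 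Pulling back via $\Phi_E$ yields the first claim with a constant depending only on $F$ and $M$.

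For the Poincar\'e inequality, apply the first inequality to $g := f - \bar f$, noting that $D_\tau g = D_\tau f$:
$$
\|f - \bar f\|_{L^p(\pa E)} \leq C\bigl(\|D_\tau f\|_{L^2(\pa E)}^{\theta}\|f-\bar f\|_{L^2(\pa E)}^{1-\theta} + \|f-\bar f\|_{L^2(\pa E)}\bigr).
$$
On each connected component $\Gamma^i$ of $\pa E$, the classical $L^2$-Poincar\'e inequality gives $\|f - \bar f\|_{L^2(\Gamma^i)} \leq C \|D_\tau f\|_{L^2(\Gamma^i)}$ with a uniform constant (again by transferring to the corresponding component of $\pa F$ through $\Phi_E$); summing over $i$ and inserting into the preceding display yields $\|f - \bar f\|_{L^p(\pa E)}\leq C\|D_\tau f\|_{L^2(\pa E)}$.

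The main technical point to be careful about is the uniformity of all constants with respect to $E \in \mathfrak{C}^1_M(F,U)$.  This uniformity is guaranteed by the $C^1$-closeness built into the class $\mathfrak{C}^1_M(F,U)$: the bound $\|\psi_E\|_{C^1(\pa F)}\leq M$ suffices to control the metric distortion of $\Phi_E$, the bi-Lipschitz constants, and the connected-component-wise Poincar\'e constants, all uniformly.  No higher regularity of $\pa E$ is needed, which is important for the application of this lemma inside the proof of Theorem~\ref{main thm 2}, where only $C^1$-closeness of $E_t$ to $F$ is propagated in time.
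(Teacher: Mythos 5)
Your proposal is correct and follows essentially the same route as the paper, which simply cites \cite[Theorem~3.70]{aubin} for the interpolation inequality on a fixed compact two-dimensional manifold. The reduction from $\pa E$ to $\pa F$ via the uniformly bi-Lipschitz graph map $\Phi_E(x)=x+\psi_E(x)\nu_F(x)$ that you spell out (together with the componentwise $L^2$-Poincar\'e) is exactly what makes the cited result apply with a constant depending only on $F$ and $M$; the only point worth stating a bit more carefully is that the lower bound on the tangential Jacobian of $\Phi_E$ uses not just $\|\psi_E\|_{C^1}\le M$ but also the constraint $E\subset F\cup U$ with $U$ a regular tubular neighborhood, which keeps $I+\psi_E B_{\pa F}$ uniformly invertible.
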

The proof of the above lemma can be found in \cite[Theorem 3.70]{aubin}.

For the next lemma we introduce the following notation: for every sufficiently regular $f$ defined on $\pa E$  we set
\begin{equation}\label{kompelot derivaatat}
\delta_i f := D_\tau f \cdot e_i \qquad \text{and} \qquad  \qquad D_\tau^2 f := (\delta_i \delta_j f  )_{i,j},
\end{equation}
where $e_i$ is the $i$-th element of the canonical basis  of $\R^3$.  
\begin{lemma}[$H^2$-estimates on boundaries]
\label{laplacian}
Let $F$, $U$, and $M$ be as  in Lemma ~\ref{interpolation}. Then there exists a constant $C>0$ such that if $E\in \mathfrak{C}^1_M(F, U)$ and   $f\in H^1(\pa E)$, with 
$\Delta_\tau f\in L^2(\pa E)$, then $f\in H^2(\pa E)$ and  
\[
\|D_\tau^2 f\|_{L^2(\partial E)}\leq C  \|\Delta_\tau f\|_{L^2(\pa E)}(1+  \|H_{\pa E}\|_{L^4(\pa E)}^2).
\]
\end{lemma}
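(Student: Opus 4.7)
The plan is to combine local elliptic regularity for the Laplace--Beltrami operator with the Bochner identity on the 2-surface $\pa E$, together with the interpolation and Poincar\'e inequalities of Lemma~\ref{interpolation}.

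For the $H^2$ regularity, I would parametrize $\pa E$ locally as the graph of $\psi_E\in C^\infty(\pa F)$ (with $\|\psi_E\|_{C^1}\leq M$). In such charts $\Delta_\tau$ becomes a divergence-form uniformly elliptic operator whose leading coefficients are bounded in terms of $\|\psi_E\|_{C^1}$, and a standard difference-quotient argument yields $f\in H^2(\pa E)$. For the quantitative bound, the starting point is the Bochner identity on the closed surface $\pa E$,
\[
\int_{\pa E}|D_\tau^2 f|^2\, d\Ha^{2} \;=\; \int_{\pa E}(\Delta_\tau f)^2\, d\Ha^{2} \;-\; \int_{\pa E} K\,|D_\tau f|^2\, d\Ha^{2},
\]
where $K$ is the Gauss curvature of $\pa E$, together with the Gauss equation $2K=H_{\pa E}^2-|B_{\pa E}|^2$. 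I would then split the curvature term into its $H^2$ and $|B|^2$ parts and treat them separately.

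The $H^2$-part is immediate from H\"older and Lemma~\ref{interpolation} applied with $p=4$ and $\theta=1/2$:
\[
\int_{\pa E}H_{\pa E}^2|D_\tau f|^2\,d\Ha^{2}\;\leq\;\|H_{\pa E}\|_{L^4}^2\,\|D_\tau f\|_{L^4}^2\;\leq\;C\|H_{\pa E}\|_{L^4}^2\bigl(\|D_\tau^2 f\|_{L^2}\|D_\tau f\|_{L^2}+\|D_\tau f\|_{L^2}^2\bigr).
\]
For the $|B|^2$-part I would use the (traced) Codazzi identity $\delta_j B_{ij}=\delta_i H_{\pa E}$ and integrate by parts once, exploiting $\nu_jB_{ij}=0$ so that the closed-surface boundary terms vanish, in order to exchange a pointwise factor of $|B|$ for a factor of $|H_{\pa E}|$ at the price of a further tangential derivative of $f$; this produces a bound of the same overall shape, in which $\|D_\tau^2 f\|_{L^2}$ appears at most linearly.

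Finally, from $\|D_\tau f\|_{L^2}^2=-\int_{\pa E}(f-\bar f)\,\Delta_\tau f\, d\Ha^{2}$ and the Poincar\'e inequality of Lemma~\ref{interpolation}, one deduces $\|D_\tau f\|_{L^2}\leq C\|\Delta_\tau f\|_{L^2}$. Plugging everything into Bochner and using Young's inequality to absorb the resulting occurrences of $\|D_\tau^2 f\|_{L^2}$ on the left-hand side yields
\[
\|D_\tau^2 f\|_{L^2(\pa E)}^2\;\leq\; C\|\Delta_\tau f\|_{L^2(\pa E)}^2\bigl(1+\|H_{\pa E}\|_{L^4(\pa E)}^2\bigr)^2,
\]
which, after taking square roots, is the desired estimate. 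The main obstacle is precisely the passage from $|B_{\pa E}|$ to $H_{\pa E}$: the pointwise identity $|B|^2=H^2-2K$ affords no direct control of $|B|^2$ by $H^2$ (saddle regions may carry arbitrarily large Gauss curvature of either sign), and the replacement has to be achieved by the Codazzi-based integration by parts sketched above, which is exactly what is affordable because the extra derivative of $f$ it produces is absorbable via the interpolation inequality on the 2-surface.
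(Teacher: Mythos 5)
Your Bochner-plus-Gauss-equation reduction is a reasonable substitute for the paper's route (which uses the commutation identity $\delta_i\delta_j=\delta_j\delta_i+(\nu_i\delta_j\nu_k-\nu_j\delta_i\nu_k)\delta_k$ from Giusti and two integrations by parts); both strategies reduce the problem to estimating $\int_{\pa E}|B_{\pa E}|^2|D_\tau f|^2\,d\Ha^2$. Two comments on the Bochner side. First, the Bochner identity controls the \emph{intrinsic} covariant Hessian, whereas $D_\tau^2 f=(\delta_i\delta_jf)_{i,j}$ in \eqref{kompelot derivaatat} is the ambient $3\times3$ object; the two differ by terms of the type $B\cdot D_\tau f$, so you pick up an additional $|B|^2|D_\tau f|^2$ contribution that has to be folded into the same bad term. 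Second, in your decomposition $-\int K|D_\tau f|^2=-\tfrac12\int H^2|D_\tau f|^2+\tfrac12\int|B|^2|D_\tau f|^2$ the $H^2$-part enters with a \emph{favorable} sign and can simply be dropped; estimating it is superfluous.

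The genuine gap is in your treatment of the $|B|^2$-part. The Codazzi-based integration by parts you sketch does not produce anything. If one writes $\int B_{ij}B_{ij}|D_\tau f|^2=\int(\delta_j\nu_i)B_{ij}|D_\tau f|^2$ and integrates by parts, all the potentially useful pieces are killed by $\nu_iB_{ij}=0$ (so no factor of $D_\tau^2f$ survives), while in the remaining piece the traced Codazzi identity reads $\delta_jB_{ij}=\delta_iH-\nu_i|B|^2$ and after contraction with $\nu_i$ the $\delta_iH$ part drops (being tangential) and the whole computation collapses to the tautology $\int|B|^2|D_\tau f|^2=\int|B|^2|D_\tau f|^2$. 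So the ``exchange $|B|$ for $|H|$ at the cost of a derivative'' step does not go through. The paper instead estimates $\int|B|^2|D_\tau f|^2\le\|B\|_{L^4}^2\|D_\tau f\|_{L^4}^2$, uses Lemma~\ref{interpolation} exactly as you do to control $\|D_\tau f\|_{L^4}$, uses the Poincar\'e step $\|D_\tau f\|_{L^2}\le C\|\Delta_\tau f\|_{L^2}$ (again as you do), and then invokes the Calder\'on--Zygmund $L^p$ estimate $\|B_{\pa E}\|_{L^4(\pa E)}\le C\bigl(1+\|H_{\pa E}\|_{L^4(\pa E)}\bigr)$, with $C$ depending only on the $C^1$-bounds on $\pa E$. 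This last elliptic $L^p$ estimate, coming from the quasilinear equation satisfied by the graph function $\psi_E$ relating the full Hessian to its trace, is precisely the mechanism that trades $|B|$ for $|H|$, and it is the step missing from your proposal.
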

The following lemma provides the crucial ``geometric interpolation'' that will be needed in the proof of the main theorem. 
\begin{lemma}[Geometric interpolation]
\label{nasty}
Let $F$, $U$, and $M$ be as  in Lemma~\ref{interpolation}. There exists a constant $C>0$ such that if $E\in \mathfrak{C}^1_M(F, U)$
the following estimates holds:
\begin{multline*}
\int_{\partial E} |B_{\pa E}| |D_\tau H_{\pa E}|^2 |\Delta_\tau H_{\pa E}| \, d \Ha^{2} \\
 \leq C \| D_\tau (\Delta_\tau H_{\pa E})\|_{L^2(\pa E)}^2 \,  \|D_\tau H_{\pa E}\|_{L^2(\pa E)}\, \left(1+ \|H_{\pa E}\|_{L^6(\pa E)}^3 \right)\,.
\end{multline*}
\end{lemma}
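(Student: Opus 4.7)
My plan is to reduce the left-hand side, via Hölder's inequality, to a product of $L^p$-norms of $B_{\pa E}$, $D_\tau H_{\pa E}$ and $\Delta_\tau H_{\pa E}$, and then to use the Gagliardo--Nirenberg interpolation of Lemma~\ref{interpolation} together with the second-order estimate of Lemma~\ref{laplacian} to express everything in terms of $\|D_\tau H_{\pa E}\|_{L^2}$, $\|D_\tau\Delta_\tau H_{\pa E}\|_{L^2}$ and $\|H_{\pa E}\|_{L^6}$. The specific structural choice I would make is the Hölder split
\[
\int_{\pa E}|B_{\pa E}|\,|D_\tau H_{\pa E}|^2\,|\Delta_\tau H_{\pa E}|\,d\Ha^{2}\leq \|B_{\pa E}\|_{L^6}\,\|D_\tau H_{\pa E}\|_{L^3}\,\|D_\tau H_{\pa E}\|_{L^6}\,\|\Delta_\tau H_{\pa E}\|_{L^3},
\]
exploiting $\tfrac{1}{6}+\tfrac{1}{3}+\tfrac{1}{6}+\tfrac{1}{3}=1$; this is calibrated so as to produce exactly $\|D_\tau\Delta_\tau H_{\pa E}\|_{L^2}^2$ and $\|D_\tau H_{\pa E}\|_{L^2}$ after interpolation.

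For the three $H$-factors, applying Lemma~\ref{interpolation} with $p=3$ ($\theta=1/3$) and $p=6$ ($\theta=2/3$) and combining, the product $\|D_\tau H\|_{L^3}\|D_\tau H\|_{L^6}$ is estimated at leading order by $C\,\|D_\tau^2 H\|_{L^2}\|D_\tau H\|_{L^2}$; likewise, since $\int_{\Gamma_i}\Delta_\tau H\,d\Ha^{2}=0$ on each connected component $\Gamma_i$ of $\pa E$ by the divergence theorem, Poincaré plus Lemma~\ref{interpolation} give $\|\Delta_\tau H\|_{L^3}\leq C\|D_\tau\Delta_\tau H\|_{L^2}$. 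Inserting the bound $\|D_\tau^2 H\|_{L^2}\leq C\|D_\tau\Delta_\tau H\|_{L^2}(1+\|H\|_{L^6}^2)$---which follows from Lemma~\ref{laplacian} and the trivial Hölder estimate $\|H\|_{L^4}\leq C\|H\|_{L^6}$ due to the uniform area bound on $\pa E$---the three $H$-factors together contribute $C\,\|D_\tau\Delta_\tau H\|_{L^2}^2\,\|D_\tau H\|_{L^2}\,(1+\|H\|_{L^6}^2)$.

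The hardest step will be controlling $\|B_{\pa E}\|_{L^6}$ in terms of $\|H_{\pa E}\|_{L^6}$, since the $\mathfrak{C}^1_M$-hypothesis provides no pointwise second-order information on $\pa E$. Here the two-dimensional nature of the surface is essential: decomposing $B=\tfrac{H}{2}g+\mathring B$ with $\mathring B$ trace-free, the Codazzi--Mainardi equations in isothermal coordinates translate into an inhomogeneous Cauchy--Riemann equation $\bar\partial q=c\,\partial H$ for the $(0,2)$-component $q$ of $\mathring B$. Standard elliptic estimates for $\bar\partial^{-1}$ on a compact surface---modulo the finite-dimensional kernel of holomorphic quadratic differentials, on which all norms are equivalent---together with the Gauss--Bonnet identity $\|\mathring B\|_{L^2}^2=\tfrac{1}{2}\|H\|_{L^2}^2-4\pi\chi(\pa E)$, would yield $\|B_{\pa E}\|_{L^6}\leq C(1+\|H_{\pa E}\|_{L^6})$; this supplies the missing third power of $\|H\|_{L^6}$.

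Assembling the pieces yields
\[
\int_{\pa E}|B_{\pa E}|\,|D_\tau H_{\pa E}|^2\,|\Delta_\tau H_{\pa E}|\,d\Ha^{2}\leq C\,\|D_\tau\Delta_\tau H_{\pa E}\|_{L^2}^2\,\|D_\tau H_{\pa E}\|_{L^2}\,(1+\|H_{\pa E}\|_{L^6})(1+\|H_{\pa E}\|_{L^6}^2),
\]
which, since $(1+a)(1+a^2)\leq 2(1+a^3)$ for $a\geq 0$, delivers the claim. The lower-order remainder terms generated at each interpolation step are of strictly smaller order in $\|D_\tau\Delta_\tau H\|_{L^2}$ and are absorbed into the main estimate by Young's inequality.
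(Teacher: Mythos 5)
Your H\"older split $\|B\|_{L^6}\|D_\tau H\|_{L^3}\|D_\tau H\|_{L^6}\|\Delta_\tau H\|_{L^3}$ differs superficially from the paper's (which uses $\|\Delta_\tau H\|_{L^3}\|D_\tau H\|_{L^4}^2\|B\|_{L^6}$), but both produce the same leading order $\|D_\tau^2 H\|_{L^2}\|D_\tau H\|_{L^2}\|B\|_{L^6}\|D_\tau\Delta_\tau H\|_{L^2}$ after applying Lemma~\ref{interpolation} and the Poincar\'e inequality for $\Delta_\tau H$, and your bookkeeping on exponents and lower-order absorptions is sound. The arrangement is therefore essentially equivalent to the paper's up to this point.

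The real divergence is the estimate $\|B_{\pa E}\|_{L^6}\leq C(1+\|H_{\pa E}\|_{L^6})$, and here you have chosen a route that is both substantially harder than necessary and not actually closed. The hypothesis $E\in\mathfrak{C}^1_M(F,U)$ is much stronger than ``no pointwise second-order information'': it says $\pa E$ is a graph $x\mapsto x+\psi_E(x)\nu_F(x)$ over $\pa F$ with $\|\psi_E\|_{C^1(\pa F)}\leq M$. Writing the prescribed mean curvature equation for $\psi_E$ (a second-order quasilinear PDE whose top-order coefficients depend only on $D\psi_E$, hence are uniformly bounded), the standard Calder\'on--Zygmund $W^{2,p}$ estimate gives $\|D^2\psi_E\|_{L^6}\leq C(1+\|H_{\pa E}\|_{L^6})$ with $C$ depending only on $M$ and $\pa F$, whence the desired bound on $\|B\|_{L^6}$ immediately. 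This is exactly what the paper does. Your intrinsic Codazzi argument in isothermal coordinates is correct in spirit (it is the mechanism behind Kuwert--Sch\"atzle-type estimates), but as stated it leaves several points unaddressed that would need genuine work: the right-hand side of $\bar\partial q=c\,\partial H$ involves $\partial H$ and the conformal factor $e^{2u}$, so passing to $\|H\|_{L^6}$ requires either an integration by parts producing $\partial(e^{2u})$ --- whose control via the Gauss equation reintroduces $B$, a circularity --- or an $L^p$-boundedness statement for $\bar\partial^{-1}\partial$ on a surface whose metric is only $C^0$-controlled; and the ``all norms are equivalent on the finite-dimensional kernel'' step needs the constants to be uniform over the family of conformal structures induced by all $E\in\mathfrak{C}^1_M(F,U)$, which is plausible by a compactness argument in moduli space but is not free. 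None of this is needed once you exploit the graph structure.
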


The next lemma highlights an interesting property of the mean curvature. Note that since $\partial E$ can be disconnected (as in the case of lamellae) one can not expect 
Poincar\'e inequality to hold on  $\partial E$. However, if $E$ is sufficiently close to a stable critical set then the Poincar\'e inequality holds for $H_{\pa E}$. 

\begin{lemma}[Geometric Poincar\'e Inequality]\label{lm:geopoinc}
Fix $p>2$,  let $F\subset\T^3$ be a strictly stable critical set according to 
Definition~\ref{def:stability+} and let $\de_1$ be the constant provided by Lemma~\ref{from AFM}, with   $\e=1$ (and $\gamma=0$). Then, there exists $C>0$ such that 
\beq\label{geopoinc}
\int_{\pa E}|H_{\pa E}-\overline H_{\pa E}|^2\, d\Ha^2\leq C\int_{\pa E}|D_\tau H_{\pa E}|^2\, d\Ha^2\,,
\eeq
provided that 
$$
\pa E=\{x+\psi(x) \nu_F(x):\, x\in \pa F \text{ for some smooth $\psi$ with $\|\psi\|_{W^{2,p}(\pa F)}\leq \de_1$}\}.  
$$
Here $\overline H_{\pa E}$ stands for the average $\medintinrigo_{\pa E}H_{\pa E}\, d\Ha^2$.
\end{lemma}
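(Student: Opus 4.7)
I would apply Lemma~\ref{from AFM} directly to the test function $\vphi:=H_{\pa E}-\overline H_{\pa E}$, with parameter $\e=1$. Let $\sigma_1>0$ be the constant produced by that lemma; after replacing it with a smaller positive number if necessary we may assume $\sigma_1\leq 1/2$. By construction $\vphi\in\Ht(\pa E)$; moreover, for every constant $\eta\in\R^3$ the divergence theorem gives $\int_{\pa E}\eta\cdot\nu_E\,d\Ha^2=0$, and the first variation formula \eqref{eq:J'} (with $\gamma=0$), applied to the admissible volume-preserving flow $\Phi_t(x)=x+t\eta$, gives $\int_{\pa E}H_{\pa E}\,\eta\cdot\nu_E\,d\Ha^2=0$ because the perimeter is translation invariant. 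Combining these two identities,
\begin{equation*}
\int_{\pa E}\vphi\,(\eta\cdot\nu_E)\,d\Ha^2=0\qquad\text{for every }\eta\in\R^3,
\end{equation*}
so that $\vphi$ is $L^2(\pa E)$-orthogonal to the translation subspace $\{\eta\cdot\nu_E:\eta\in\Pi_F\}$. Hence $\min_{\eta\in\Pi_F}\|\vphi-\eta\cdot\nu_E\|_{L^2(\pa E)}=\|\vphi\|_{L^2(\pa E)}$, and Lemma~\ref{from AFM} applies under the $W^{2,p}$-closeness hypothesis assumed here.

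Next, since $\gamma=0$ and $D_\tau\vphi=D_\tau H_{\pa E}$, the quadratic form \eqref{J2} reduces to
\begin{equation*}
\pa^2J(E)[\vphi]=\int_{\pa E}|D_\tau H_{\pa E}|^2\,d\Ha^2-\int_{\pa E}|B_{\pa E}|^2(H_{\pa E}-\overline H_{\pa E})^2\,d\Ha^2.
\end{equation*}
Combining this with the coercivity estimate $\pa^2J(E)[\vphi]\geq\sigma_1(\|\vphi\|_{L^2(\pa E)}^2+\|D_\tau H_{\pa E}\|_{L^2(\pa E)}^2)$ and discarding the non-negative $|B_{\pa E}|^2$ term on the resulting right-hand side, I obtain
\begin{equation*}
(1-\sigma_1)\int_{\pa E}|D_\tau H_{\pa E}|^2\,d\Ha^2\;\geq\;\sigma_1\,\|H_{\pa E}-\overline H_{\pa E}\|_{L^2(\pa E)}^2,
\end{equation*}
which is \eqref{geopoinc} with $C=(1-\sigma_1)/\sigma_1$.

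The sole conceptual subtlety lies in the choice of $\vphi$ and in identifying the subspace against which coercivity holds. Because $\pa E$ may be disconnected (think of lamellae or unions of almost-spherical components), the standard global-average Poincar\'e inequality is false, and even componentwise Poincar\'e estimates do not directly yield the claim without extra control on the discrepancies between the component averages and the global mean. Strict stability of $F$ supplies coercivity precisely on the $L^2$-orthogonal complement of the (at most three-dimensional) translation space, and the one-line first variation computation shows that $H_{\pa E}-\overline H_{\pa E}$ automatically lies in that complement. Once this is observed, the remainder is a brief algebraic manipulation.
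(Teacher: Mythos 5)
Your proof is correct and follows essentially the same route as the paper: choose $\vphi=H_{\pa E}-\overline H_{\pa E}$, observe that it is orthogonal in $L^2(\pa E)$ to the subspace of infinitesimal translations (the paper simply cites $\int_{\pa E}(H_{\pa E}-\overline H_{\pa E})\nu_E\,d\Ha^2=0$, for which your derivation via the translation invariance of the perimeter is a perfectly valid justification), apply Lemma~\ref{from AFM} with $\e=1$, and drop the non-negative curvature term in \eqref{J2}. The only cosmetic difference is that you retain the gradient part of the $H^1$-coercivity and arrive at the constant $(1-\sigma_1)/\sigma_1$ instead of $1/\sigma_1$; the substance is identical.
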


Finally, we have:

\begin{lemma}[Compactness of sets]\label{w32conv}
Let $F$, $U$, and $M$ be as  in Lemma ~\ref{interpolation}. Let $\{E_n\}_n\subset \mathfrak{C}^1_M(F, U) $ be a sequence of sets  such that 
$$
\sup_n\int_{\pa E_n}|D_\tau H_{\pa E_n}|^2\, dx<+\infty\,.
$$
Then there exists $F'\in \mathfrak{C}^1_M(F, U)$ of class  $W^{3,2}$ such that, up to a  (non relabeled) subsequence, $E_n\to F'$ in $W^{2,p}$ for all $p\in [1,+\infty)$. Moreover, if
\eqref{geopoinc} holds for every set $E_n$ (with $C$ independent of $n$) and 
$$
\int_{\pa E_n}|D_\tau H_{\pa E_n}|^2\, dx\to 0\,,
$$
then $F'$ is critical in the sense of Definition~\ref{def:criticality} and the convergence holds in $W^{3,2}$.
\end{lemma}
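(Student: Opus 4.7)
The plan is to first establish a uniform $W^{3,2}(\pa F)$-bound on the graph functions $\psi_n$ parametrizing $\pa E_n$ as normal graphs over $\pa F$, then invoke compact Sobolev embedding to extract a convergent subsequence, and finally pass to the limit in the pulled-back mean curvature equation. Throughout I identify each $E_n$ with the smooth $\psi_n\in C^\infty(\pa F)$ satisfying $\pa E_n = \{x+\psi_n(x)\nu_F(x): x\in \pa F\}$ and $\|\psi_n\|_{C^1(\pa F)}\leq M$. Pulling back via $\Phi_n(x) = x+\psi_n(x)\nu_F(x)$, the mean curvature takes the quasilinear form
\[
H_n\circ\Phi_n \; = \; a^{ij}(x,\psi_n,D\psi_n)\,D_{ij}\psi_n \,+\, b(x,\psi_n,D\psi_n),
\]
with $a^{ij}$ and $b$ smooth in their arguments and, thanks to the uniform $C^1$ bound, with $a^{ij}$ uniformly elliptic and its coefficients equicontinuous in $n$.

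The main technical step is to prove $\sup_n\|\psi_n\|_{W^{3,2}(\pa F)}<\infty$. Starting from $\int_{\pa E_n}|D_\tau H_n|^2\leq C$ and using the Poincar\'e-Wirtinger inequality on each connected component of $\pa F$ together with the two-dimensional embedding $H^1\hookrightarrow L^q$ for every $q<\infty$, I get uniform bounds on $\|H_n - c_n^{(i)}\|_{L^q(\pa E_n^{(i)})}$, where $c_n^{(i)}$ denotes the average of $H_n$ over the $i$-th connected component of $\pa E_n$. To control the constants $c_n^{(i)}$ I plan to test the pulled-back equation against suitable test functions on $\pa F$, exploiting the uniform $C^1$-bound on $\psi_n$ together with the divergence structure of the mean curvature operator, to conclude $|c_n^{(i)}|\leq C$, and hence $\|H_n\|_{L^q(\pa E_n)}\leq C_q$ for every finite $q$. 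Standard $L^q$ Calder\'on-Zygmund estimates for the quasilinear uniformly elliptic operator then yield $\|\psi_n\|_{W^{2,q}(\pa F)}\leq C_q$. Differentiating the equation once and applying the $L^2$-estimate once more, with $\|D_\tau H_n\|_{L^2}\leq C$ and the $W^{2,q}$-control just obtained, produces the desired $\sup_n\|\psi_n\|_{W^{3,2}}<\infty$.

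Once this uniform bound is available, weak compactness gives $\psi_n\rightharpoonup \psi$ in $W^{3,2}(\pa F)$ along a subsequence, and the compact embedding $W^{3,2}(\pa F)\hookrightarrow\hookrightarrow W^{2,p}(\pa F)$ (valid on the compact $2$-manifold $\pa F$ for every $p\in[1,\infty)$) upgrades this to strong $W^{2,p}$-convergence. Setting $\pa F' := \{x+\psi(x)\nu_F(x): x\in \pa F\}$ defines $F'\in \mathfrak{C}^1_M(F,U)$ of class $W^{3,2}$ with $E_n\to F'$ in $W^{2,p}$ for every $p\in[1,\infty)$. For the second statement, the uniform geometric Poincar\'e inequality \eqref{geopoinc} together with $\int|D_\tau H_n|^2\to 0$ forces $\|H_n-\overline{H}_n\|_{L^2(\pa E_n)}\to 0$, and the strong $W^{2,p}$-convergence allows me to pass to the limit in the pulled-back equation and conclude $H_{\pa F'}\equiv \lambda$ a.e., so that $F'$ is critical in the sense of Definition~\ref{def:criticality}. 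Strong $W^{3,2}$-convergence then follows by applying the $L^2$ elliptic estimate to the differences $\psi_n-\psi$: the right-hand side contains $D_\tau H_n - D_\tau H_{\pa F'} = D_\tau H_n$ which tends to $0$ in $L^2$, together with lower-order terms that vanish thanks to the already established strong $W^{2,p}$-convergence.

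The genuinely delicate point is the control of the component-wise averages $c_n^{(i)}$, which is needed to close the Calder\'on-Zygmund bootstrap: only the tangential gradient $D_\tau H_n$ is directly under control by hypothesis, while a genuine $L^q$-bound on $H_n$ itself is required to feed elliptic regularity. All the other ingredients---quasilinear elliptic regularity, Rellich-Kondrachov compactness, and the passage to the limit in the equation---are fairly standard in this setting.
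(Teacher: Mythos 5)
Your proof is correct and follows essentially the same strategy as the paper's: the paper omits the proof of Lemma~\ref{w32conv}, stating only that it is similar to that of Lemma~\ref{w52conv} for the Mullins-Sekerka flow, and indeed the skeleton you give --- control the component-wise averages of $H_n$ via the divergence structure of the mean curvature in local coordinates, use the Sobolev embedding to get $L^q$-bounds on $H_n$, feed these into the quasilinear Calder\'on-Zygmund estimate to get $W^{2,q}$-bounds on $\psi_n$, then differentiate once and use the $L^2$-bound on $D_\tau H_n$ to reach $W^{3,2}$, followed by Rellich compactness and passage to the limit --- is precisely the analogue of the paper's Step 1 and Step 2 for Lemma~\ref{w52conv}. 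In fact the surface-diffusion version is mildly simpler than the Mullins-Sekerka version the paper does write out, because here $H_n\in H^1(\pa E_n)$ is an integer-order Sobolev space, so the fractional-seminorm machinery of Lemmas~\ref{nicola1} and~\ref{noia} (Gagliardo seminorm and $H^{1/2}$-curvature estimates) is not needed; your one-step differentiation of the pulled-back equation replaces it. The one place where you are somewhat schematic is the control of the averages $c_n^{(i)}$ ``by testing the pulled-back equation against suitable test functions'': the paper makes this concrete in the $W^{5/2,2}$ case by integrating the mean-curvature divergence over a coordinate ball $B'$ and invoking the divergence theorem to reduce to a boundary integral bounded by the uniform $C^1$-control on $\psi_n$; your phrasing is a correct description of the same computation.
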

The proof of this lemma is similar to the proof of Lemma~\ref{w52conv} given in Subsection~\ref{subsec:SD} and thus we omit it. 

\begin{proof}[\textbf{Proof of Theorem~\ref{main thm 2}.}] The proof of the theorem is very close in spirit to the proof of 
Theorem~\ref{main thm 1}. 
In the following,  $C$ will denote a constant depending only on the $C^{1}$-bounds on the boundary of the set. The value of $C$  may change from line to line.  
  For every  $\e_0>0$ sufficiently small, there exists $\de_0\in (0,1)$ so small that for any set $E\in \mathfrak{C}^{1}_M(F, U)$ the following implications hold true:
 \beq\label{de01bis}
 E\in \mathfrak{h}^{2,\alpha}_M(F, U)\text{ and } D(E)\leq \de_0\Longrightarrow \|\psi_E\|_{C^1(\pa F)}\leq \frac{\e_0}2\,,
 \eeq  
where $D(E)$ is defined in \eqref{D(E)0}, and 
 \beq\label{de02bis}
  \|\psi_E\|_{C^1(\pa F)}\leq \e_0\text{ and } \int_{\pa E} |D_\tau H_{\pa E}|^2\, d\Ha^2 \leq 1\Longrightarrow \|\psi_E\|_{W^{2,6}(\pa F)}\leq \omega(\e_0)\leq 1\,,
 \eeq  
 where $\omega$ is a positive non-decreasing function such that $\omega(\e_0)\to 0$ as $\e_0\to 0^+$. Note that the last implication is true thanks to Lemma~\ref{w32conv}. 
 
 Note also that by Lemma~\ref{lm:geopoinc}, there exists $C>0$ such that if  $\e_0$ is small enough, then
 \begin{multline}\label{de02bisbis}
 \|\psi_E\|_{W^{2,6}(\pa F)}\leq \omega(\e_0)
 \Longrightarrow 
 \int_{\pa E}|H_{\pa E}-\overline H_{\pa E}|^2\, d\Ha^2\leq C\int_{\pa E}|D_\tau H_{\pa E}|^2\, d\Ha^2\,,
 \end{multline}
where $\overline H_{\pa E}$ is the average of $H_{\pa E}$ over $\pa E$.
 Fix $\e_0$, $\de_0\in (0,1)$ satisfying \eqref{de01bis}, \eqref{de02bis} and \eqref{de02bisbis}, and choose an initial set $E_0\in \mathfrak{h}^{2,\alpha}_M(F, U)$ such that
 \beq\label{initialbis}
 D(E_0)\leq \delta_0\qquad\text{and}\qquad \int_{\pa E_0} |D_\tau H_{\pa E_0}|^2\, d\Ha^2 \leq \delta_0\,.
 \eeq
 Let $(E_t)_{t\in (0, T(E_0))}$ be the unique classical solution to the surface diffusion flow provided 
 by Theorem~\ref{th:EMS}, with $T(E_0)$ denoting the maximal time of existence. By the same theorem, there exists $T_0>0$ such that  \eqref{dalbasso} holds.
 We now split the rest of the proof into several steps as in the proof of Theorem~\ref{main thm 1}. 
 
 \vspace{5pt}
\noindent {\bf Step 1.}{\it (Stopping-time)} Let $\bar t\leq T(E_0)$ be the maximal time such that 
\beq\label{Tprimobis}
\|\psi_t\|_{C^1(\pa F)}<\e_0\quad\text{and}\quad \int_{\pa E_t} |D_\tau H_{t}|^2\, d\Ha^2 < 2\delta_0.
\quad\text{for all $t\in (0, \bar t)$,}
\eeq
 As before,  we claim that  by taking $\eps_0$ and $\de_0$ smaller if needed, we have $\bar t=T(E_0)$.
 
\vspace{5pt}

\vspace{5pt}
\noindent {\bf Step 2.}{\it (Estimate of the  translational component of the flow)} We claim that there exists $\e>0$ such that
\begin{equation} \label{not a translationbis}
\min_{\eta\in \Pi_F}\big\|\Delta_\tau H_t-\eta\cdot\nu_t\big\|_{L^2(\pa E_t)}\geq \e\|\Delta_\tau H_t\|_{L^2(\pa E_t)}\qquad\text{for all }t\in (0, \bar t)\,,
\end{equation}
where $\Pi_F$ is defined in \eqref{projektio}. To this aim, let $\eta_t\in \Pi_F$ be such that 
\begin{equation} \label{not a translation 2bis}
\Delta_\tau H_t  = \eta_t\cdot \nu_t + g,
\end{equation} 
where $g$ is orthogonal to the subspace of $L^2(\partial E_t)$ spanned by $\tilde e_i\cdot \nu_{t}$ with $i \in I_F$ (see \eqref{indeksit}). As in Step 2 of the proof of Theorem~\ref{main thm 1} we will show that if $\e$ is small enough, then assuming $\|g\|_{L^2(\pa E_t)} < \eps \| \Delta_\tau H_t  \|_{L^2(\pa E_t)}$ leads to a contradiction. Recall that $\Delta_\tau H_t$ has zero average. Therefore, 
setting $\overline H_t:=\medintinrigo_{\pa E_t}H_t\, d\Ha^2$,  and recalling also \eqref{de02bis} and  \eqref{de02bisbis}, we get 
\beq\label{sfiguz}
\begin{split}
\|H_t -\overline H_t \|_{L^2(\pa E_t)}^2 & \leq C \int_{\pa E_t}  |D_\tau H_t|^2\, d\Ha^2 \\
&= -C\int_{\pa E_t} \Delta_\tau H_t  H_t\, d\Ha^2= -C\int_{\pa E_t}  \Delta_\tau H_t  (H_t-\overline H_t )\, d\Ha^2\\
&\leq C \|H_t -\overline H_t \|_{L^2(\pa E_t)}\| \Delta_\tau H_t\|_{L^2(\pa E_t)}\,.    
\end{split}
\eeq
Recall now that $\int_{\pa E_t}H_t\nu_t\, d\Ha^2=\int_{\pa E_t}\nu_t\, d\Ha^2=0$.  Thus,  multiplying \eqref{not a translation 2bis} by $H_t-\overline H_t$, integrating over $\partial E_t$, and using \eqref{sfiguz},  we get
\[
\begin{split}
\biggl| \int_{\partial E_t}  (H_t -\overline H_t)\Delta_\tau H_t   \, d \Ha^{2}  \biggl|&= 
\biggl| \int_{\partial E_t}  (H_t -\overline H_t)g  \, d \Ha^{2}  \biggl|\\
&< \eps \| H_t -\overline H_t \|_{L^2(\partial E_t)} \| \Delta_\tau H_t  \|_{L^2(\pa E_t)}\\
&\leq C\e \| \Delta_\tau H_t  \|_{L^2(\pa E_t)}^2\,.
\end{split}
\]
Arguing as in Step 2 of the proof of Theorem~\ref{main thm 1} we have that, if $\eps_0$ is small enough there exists a constant $C$ such that 
 $|\eta_t|\leq C\| \Delta_\tau H_t  \|_{L^2(\pa E_t)}$. Hence
\[
\begin{split}
\|\eta_t\cdot\nu_t\|^2_{L^2(\pa E_t)}&=\int_{\pa E_t}\Delta_\tau H_t (\eta_t\cdot\nu_t)\, d\Ha^2= 
- \int_{\partial E_t}   D_\tau H_t \cdot    D_\tau (\eta_t\cdot\nu_t)\, d \Ha^{2}\\
&\leq|\eta_t|\|D_\tau\nu_t\|_{L^2(\pa E_t)} \| D_\tau H_t  \|_{L^2(\pa E_t)}\\
&\leq C \|D_\tau\nu_t\|_{L^2(\pa E_t)} \| \Delta_\tau H_t  \|_{L^2(\pa E_t)} \left(- \int_{\partial E_t} (H_t -\overline H_t)\Delta_\tau H_t \, d \Ha^{2} \right)^{1/2}\\
&\leq C \|D_\tau\nu_t\|_{L^2(\pa E_t)}  \eps^{1/2}\| \Delta_\tau H_t  \|_{L^2(\pa E_t)}^2\leq C  \eps^{1/2}\| \Delta_\tau H_t  \|_{L^2(\pa E_t)}^2\,,
\end{split}
\]
where in the last inequality the constant $C$ depends also on the curvature bounds provided by \eqref{de02bis}.
If $\e$ is chosen so small that $C\e^{\frac12}+\e^2 < 1$ in the last inequality, then we reach a contradiction to 
\eqref{not a translation 2bis} and the fact that $\|g\|_{L^2(\pa E_t)}<\e \| \Delta_\tau H_t  \|_{L^2(\pa E_t)}$. 

As in Step 2 of the proof of  Theorem~\ref{main thm 1}, by taking $\e_0$ (and $\de_0$) smaller if needed, we may ensure that \eqref{de05} holds,
 with $\omega$ the modulus of continuity introduced in \eqref{de02bis} and $\de_1$ satisfying \eqref{de03} and \eqref{de04}, with $W^{2,3}(\pa F)$ replaced by $W^{2,6}(\pa F)$.
 
 \vspace{5pt}
\noindent {\bf Step 3.}{\it (The stopping time $\bar t$ equals the maximal time $T(E_0)$)} Here we  assume by contradiction that $\bar t< T(E_0)$ and thus  
$$
\|\psi_{\bar t}\|_{C^1(\pa F)}=\e_0\quad\text{or}\quad \int_{\pa E_{\bar t}} |D_\tau H_{\bar t}|^2\, d\Ha^2 =2\delta_0\,.
$$
We further split into two sub-steps, according to the two alternatives above.

\vspace{5pt}
{\it Step 3-(a).}  Assume that 
\beq\label{step3abis}
\int_{\pa E_{\bar t}} |D_\tau H_{\bar t}|^2\, d\Ha^2 =2\delta_0\,.
\eeq
Recall that \eqref{not a translationbis} holds. Thus, by \eqref{de02bis}, \eqref{Tprimobis},  \eqref{de03} (with $W^{2,3}(\pa F)$ replaced by $W^{2,6}(\pa F)$), and \eqref{de05} we have
$$
\partial^2 J(E_t)\left[ \Delta_\tau H_t\right]\geq \sigma_\e\| \Delta_\tau H_t \|_{H^1(\pa E)}^2 \text{ for all $t\in (0, \bar t)$.}
$$
Note also that \eqref{sfiguz}, together with the Poincar\'e inequality \eqref{geopoinc}, yields
\beq\label{sfiguz2}
\| D_\tau H_t\|_{L^2(\pa E_t)}\leq C\| \Delta_\tau H_t\|_{L^2(\pa E_t)}\,.
\eeq 
Now, we may use Lemma~\ref{calculationsbis} to estimate 
\[
\begin{split}
\frac{d}{dt} \left(\frac{1}{2} \int_{\partial E_t} |D_\tau H_t|^2\, d \Ha^{2} \right) &\stackrel{\hphantom{Lemma~\ref{nasty}}}{\leq}  - \sigma_{\eps} \|\Delta_\tau H_t\|^2_{H^1(\pa E_t)}  + 2\int_{\partial E_t} |B_t| | D_\tau H_t |^2 |\Delta_\tau H_t| \, d \Ha^{2} \\
&\stackrel{Lemma~\ref{nasty}}{\leq} - \sigma_{\eps} \|\Delta_\tau H_t\|^2_{H^1(\pa E_t)} \\
&\qquad  + C \| D_\tau (\Delta_\tau H_t)\|_{L^2(\pa E_t)}^2 \|D_\tau H_t\|_{L^2(\pa E_t)} \left(1+ \|H_t\|_{L^6(\pa E_t)}^3\right) \\
&\stackrel{\hphantom{Le}\eqref{Tprimobis}\quad}{\leq}  - \sigma_{\eps} \|\Delta_\tau H_t\|^2_{H^1(\pa E_t)} \\
&\qquad  + C \sqrt{\de_0}\| D_\tau (\Delta_\tau H_t)\|_{L^2(\pa E_t)}^2  \left(1+ \|H_t\|_{L^6(\pa E_t)}^3\right) \\
&\stackrel{\hphantom{Le}\eqref{de02bis}\quad}{\leq}  - \sigma_{\eps} \|\Delta_\tau H_t\|^2_{H^1(\pa E_t)}   + C \sqrt{\de_0}\| D_\tau (\Delta_\tau H_t)\|_{L^2(\pa E_t)}^2
\end{split}
\]
for every $t \leq \bar t$. Thus, if we choose $\de_0$ small enough we have
$$
\frac{d}{dt} \left(\frac{1}{2} \int_{\partial E_t} |D_\tau H_t|^2\, d \Ha^{2} \right)\leq    - \frac{\sigma_{\eps}}2 \|\Delta_\tau H_t\|^2_{H^1(\pa E_t)}\leq -c_0 \|D_\tau H_t\|^2_{L^2(\pa E_t)}\,,
$$
where the last inequality follows from \eqref{sfiguz2}.

  Integrating the differential inequality  and recalling \eqref{initialbis}, we obtain
\beq\label{expfinalmentebis}
\int_{\partial E_t} |D_\tau H_t|^2\, d \Ha^{2} \leq \mathrm{e}^{-c_0 t}\int_{\partial E_0} |D_\tau H_{E_0}|^2\, d \Ha^{2} \leq \delta_0  \mathrm{e}^{-c_0 t}
\eeq
which gives a contradiction to \eqref{step3abis} for $t=\bar t$.

\vspace{5pt}
{\it Step 3-(b).}  Assume now that 
\beq\label{step3bbis}
\|\psi_{\bar t}\|_{C^1(\pa F)}=\e_0\,.
\eeq
Then, arguing as in Step 3-(b) of the proof of Theorem~\ref{main thm 1}, we can compute
\[
\begin{split}
\frac{d}{dt}D(E_t)&= \int_{E_t}\Div(d_F X_t)\, dx= \int_{\pa E_t}d_F\, \Delta_\tau H_t\, d\Ha^2\\
&=- \int_{\pa E_t}D_\tau d_F\cdot D_\tau H_t\, d\Ha^2\leq C \|D_\tau H_t\|_{L^2(\pa E_t)}\leq C\sqrt{\de_0} \mathrm{e}^{-\frac{c_0}2 t}\,,
\end{split}
\]
where the last inequality clearly follows from \eqref{expfinalmentebis}. We may now argue exactly as  in the end of Step 3-(b) of the proof of Theorem~\ref{main thm 1} and reach  a contradiction to  \eqref{step3bbis} if $\de_0$ is small enough.

Thus $\bar t=T(E_0)$, and as a byproduct of   \eqref{expfinalmentebis} and of Step 3-(b)  we also have
\begin{multline}\label{finaldecaybis}
\|\psi_t\|_{C^1(\pa F)}<\e_0\\
\text{ and }\int_{\partial E_t} |D_\tau H_t|^2\, d \Ha^{2} \leq \mathrm{e}^{-c_0 t}\int_{\partial E_0} |D_\tau H_{E_0}|^2\, d \Ha^{2} \quad\text{ for all $t\in (0, T(E_0))$.}
\end{multline}

\vspace{5pt}
\noindent {\bf Step 4.}{\it (Global-in-time existence)} Here we assume by contradiction $T(E_0)<+\infty$. Then, we may argue exactly as in Step 4 of the proof of Theorem~\ref{main thm 1} to find  $\hat t\in \left(T(E_0)\!-\!\frac{T_0}2,T(E_0)\!-\!\frac{T_0}4 \right)$ such that
 $\|\Delta_\tau H_{\hat t} \|_{H^1(\pa E_{\hat t})}^2\leq \frac{8\de_0}{T_0\sigma_\e}$. Thus, also by Lemma~\ref{laplacian}
 $$
 \|D^2_\tau H_{\hat t}\|_{L^2(\pa E_{\hat t})}^2\leq C\|\Delta_\tau H_{\hat t} \|^2_{L^2(\pa E_{\hat t})}\left(1+\|H_{\hat t}\|^4_{L^4(\pa E_{\hat t})}\right)\leq C\de_0\,,
 $$
 where in the last inequality we also used the curvature bounds provided by \eqref{de02bis}. In turn, for $p$ large enough
 $$
  [H_{\hat t}]^2_{C^{0,\alpha}(\pa E_{\hat t})}\leq C\|D_\tau H_{\hat t}\|^2_{L^p(\pa E_{\hat t})}\leq C \|D_\tau H_{\hat t}\|^2_{H^1(\pa E_{\hat t})}\leq C\de_0\,,
 $$
 where in the last equality we used also \eqref{finaldecaybis}.
 
  Thus, if we choose $\de_0$ sufficiently small, then 
 $E_{\hat t}\in \mathfrak{h}^{2,\alpha}_M(F, U)$ and, by \eqref{dalbasso} the   time span   of existence of the classical solution starting from  $E_{\hat t}$ is at least $T_0$.  This implies that  $(E_t)_t$ can be continued beyond $T(E_0)$, leading to  a contradiction. 
 
We can now proceed exactly as in Steps 5 and 6 of the proof of Theorem~\ref{main thm 1}, using Lemma~\ref{w32conv} instead of Lemma~\ref{w52conv}, to get the desired conclusion. We leave the details to the reader. 
\end{proof}

\section{Proofs of technical lemmas}\label{sec:technicalproofs}
In this final section  we  collect the proofs of the several technical lemmas stated in the previous sections. 
\subsection{The modified Mullins-Sekerka flow: proof of technical lemmas}\label{subsec:MSnl}
\begin{proof}[Proof of Lemma~\ref{from AFM}]
\noindent{\bf Step 1.} First we claim that the strict stability of $F$ (Definition~\ref{def:stability+}) implies
\begin{equation} \label{uusi stability}
\partial^2J(F)[\vphi]>0 \qquad \text{for all }\, \vphi \in \widetilde H(\pa F) \setminus T(\pa F).
\end{equation}
To this aim we observe that from \eqref{vEper} we get
\[
Dv_F(x) = 2 \int_F D_x G_{\T^3}(x,y) \, dy = - 2\int_F D_y G_{\T^3}(x,y) \, dy = -2 \int_{\partial F}  G_{\T^3}(x,y) \nu(y) \, d \Ha^2(y). 
\]
Setting $\nu_i = e_i \cdot \nu_F$ we have by \cite[Lemma 10.7]{Giu}
\[
-\Delta_\tau \nu_i - |B_{\partial F}|^2\nu_i = -\delta_i H_{\partial F}
\]
where $\delta_i $ is defined as in \eqref{kompelot derivaatat}. Since  $F$ is critical it satisfies $H_{\partial F} + 4 \gamma v_F= const.$ and by the above identities, we have
\[
-\Delta_\tau \nu_i - |B_{\partial F}|^2\nu_i = -4\gamma \partial_\nu v_F \nu_i -8\gamma \int_{\partial F}  G_{\T^3}(x,y) \nu_i(y) \, d \Ha^2(y).
\]
This can be written as $L(\nu_i)= 0$,   where $L: H^1(\partial F) \to H^{-1}(\partial F)$ is self-adjoint, linear operator defined as
\[
L(\vphi) := -\Delta_\tau \vphi - |B_{\partial F}|^2\vphi  + 4\gamma \partial_\nu v_F \vphi + 8\gamma \int_{\partial F}  G_{\T^3}(x,y) \vphi(y) \, d \Ha^2(y).
\]
Let now $\vphi \in \widetilde H(\pa F) \setminus T(\pa F)$. We may write $\vphi = \psi + \eta \cdot  \nu_F$ for some $\eta \in \R^3$, where $\psi \in T^\perp(\partial F)\setminus\{0\}$.
Since $L$ is self-adjoint, we then conclude 
\[
\begin{split}
&\partial^2J(F)[\vphi] =\langle L(\vphi), \vphi \rangle_{H^{-1}\times H^1}\\
&= \langle L(\psi), \psi \rangle_{H^{-1}\times H^1}+ 2 \langle L(\eta \cdot  \nu_F), \psi\rangle_{H^{-1}\times H^1} +\langle L(\eta \cdot  \nu_F), \eta \cdot  \nu_F   \rangle_{H^{-1}\times H^1} 
= \partial^2J(F)[\psi] >0, 
\end{split}
\]
where the last inequality follows from the strict stability assumption on $F$.

Having proved \eqref{uusi stability} we show next that for every  $\e\in (0, 1]$ it holds
\begin{multline}\label{c0}
m_\e:=\inf\Bigl\{\pa^2J(F)[\vphi]:\, \vphi\in \Ht(\pa F)\,, \|\vphi\|_{H^1(\pa F)}=1\\
\text{ and }\min_{\eta\in \Pi_F}\|\vphi-\eta\cdot\nu_F\|_{L^2(\pa F)}\geq \e\|\vphi\|_{L^2(\pa F)}\Bigr\}>0\,.
\end{multline}
Indeed, let $\vphi_h$ be a minimizing sequence for the infimum in \eqref{c0} and assume that $\vphi_h\wto \vphi_0\in \Ht(\pa F)$ weakly in $H^1(\pa F)$. Let us first assume that $\vphi_0\neq 0$. 
Since 
$$
\min_{\eta\in  \Pi_F}\|\vphi_0-\eta\cdot\nu_F\|_{L^2(\pa F)}\geq \e\|\vphi_0\|_{L^2(\pa F)},
$$
we conclude  $\vphi_0\in \Ht(\pa F)\setminus T(\pa F)$.
Thus,
$$
m_\e=\lim_h \pa^2 J(F)[\vphi_h]\geq \pa^2 J(F)[\vphi_0]>0\,,
$$
where the last inequality follows from \eqref{uusi stability}. If $\vphi_0= 0$, then
$$
m_\e=\lim_h\pa^2 J(F)[\vphi_h]=\lim_h\int_{\pa F}|D_\tau \vphi_h|^2 \, d\Ha^2=1\,.
$$ 
 
  \noindent {\bf Step 2.}   In order to conclude the proof of the lemma it is enough to show the existence of $\delta>0$ such that
 if $\pa E=\{x+\psi(x) \nu_F
(x):\, x\in \pa F\}$ with  $\|\psi\|_{W^{2,p}(\pa F)}\leq\delta$, then
\begin{multline}\label{c2unif}
\inf\Bigl\{\pa^2J(E)[\vphi]:\, \vphi\in \Ht(\pa E)\,, \|\vphi\|_{H^1(\pa E)}=1\\
\text{ and }\min_{\eta\in  \Pi_F}\|\vphi-\eta\cdot\nu_E\|_{L^2(\pa E)}\geq \e\|\vphi\|_{L^2(\pa E)}\Bigr\}\geq\sigma_\e:=\frac{1}{2}\min \{ m_{\e/2},1\}\,,
\end{multline}
where $m_{\e/2}$ is defined in \eqref{c0}, with $\e/2$ in place of $\e$. 
Assume by contradiction that there exist a sequence $E_h$, with $\pa E_h=\{x+\psi_h(x) \nu_F
(x):\, x\in \pa F\}$ and $\|\psi_h\|_{W^{2,p}(\pa F)}\to 0$, and a sequence  $\vphi_h\in \Ht(\pa E_h)$, with $\|\vphi_h\|_{H^1(\pa E_h)}=1$ and $\min_{\eta\in  \R^3}\|\vphi_h-\eta\cdot\nu_{E_h}\|_{L^2(\pa E_h)}\geq \e\|\vphi_h\|_{L^2(\pa E_h)}$, such that 
\beq\label{liminf}
\pa^2J(E_h)[\vphi_h]<\sigma_\e\,.
\eeq
Assume first that $\lim_{h}\|\vphi_h\|_{L^2(\pa E_h)}=0$ and observe that by  Sobolev embedding  $\|\vphi_h\|_{L^q(\pa E_h)}\to0$ for every $q>1$. Thus, since $\psi_h$ are uniformly bounded in $W^{2,p}$ for $p>2$ 
we obtain
\[
\lim_h\pa^2 J(E_h)[\vphi_h] = 1,
\]
which is a contradiction to \eqref{liminf}.

Thus we may assume that 
\beq\label{lim positive}
\lim_{h}\|\vphi_h\|_{L^2(\pa E_h)} >0. 
\eeq
The idea now is to read  $\vphi_h$ as a function on $\pa F$. For $x\in \pa F$ set
$$
\tilde\vphi_h(x):=\vphi_h\bigl(x+\psi_h(x) \nu_F(x)\bigr)- \medint_{\pa F}\vphi_h(y+\psi_h(y) \nu_F(y))\, d\Ha^{2}(y)\,.
$$
 As $\psi_h\to 0$ in $W^{2,p}(\pa F)$, we have in particular that 
 \beq\label{sothat}
 \tilde\vphi_h\in \Ht(\pa F)\,, \qquad \|\tilde\vphi_h\|_{H^1(\pa F)}\to 1\,, \qquad\text{and}
 \qquad \frac{\|\tilde \vphi_h\|_{L^2(\pa F)}}{\| \vphi_h\|_{L^2(\pa E_h)}}\to 1\,.
 \eeq
Note also that $\nu_{E_h}(\cdot +\psi_h(\cdot)\nu_F(\cdot))\to \nu_F$ in $W^{1,p}(\pa F)$ and thus in $C^{0,\alpha}(\pa F)$ for a suitable $\alpha\in (0,1)$ depending on $p$. Using also this, and taking into account the third limit in \eqref{sothat} and \eqref{lim positive}, one can easily show that 
$$
\liminf_h\frac{\min_{\eta\in  \Pi_F}\|\tilde \vphi_h-\eta\cdot\nu_{F}\|_{L^2(\pa F)}}{\|\tilde \vphi_h\|_{L^2(\pa F)}}\geq 
\liminf_h\frac{\min_{\eta\in  \Pi_F}\|\vphi_h-\eta\cdot\nu_{E_h}\|_{L^2(\pa E_h)}}{\|\vphi_h\|_{L^2(\pa E_h)}}\geq \e\,.
$$
Thus, for $h$ large enough we have 
$$
\|\tilde\vphi_h\|_{H^1(\pa F)}\geq \frac34\qquad\text{and}\qquad \min_{\eta\in  \Pi_F}\|\tilde \vphi_h-\eta\cdot\nu_{F}\|_{L^2(\pa F)}\geq \frac{\e}2\|\tilde \vphi_h\|_{L^2(\pa F)}\,.
$$
In turn, by Step 1 we infer
\beq\label{bystep1}
\pa^2J(F)[\tilde \vphi_h]\geq \frac{9}{16}m_{\e/2}\,.
\eeq
Moreover, the $W^{2,p}$ convergence of $E_h$ to $F$ and  standard elliptic estimates for the problem \eqref{eqvE} imply
\beq\label{proj2}
B_{\pa E_h}\bigl(\cdot +\psi_h(\cdot) \nu_F(\cdot)\bigr)\to B_{\pa F}\ \text{in  $L^p(\pa F)$},\qquad v_{E_h}\to v_F\ \text{in $C^{1,\beta}(\T^3)$ for all $\beta<1$.}
\eeq
We now check that
\begin{multline}\label{proj3}
\int_{\partial E_h}\!\int_{\partial E_h}\!\!\!G_{\T^3}(x,y)\vphi_h(x)\vphi_h(y)\, d\Ha^{2}(x) d\Ha^{2}(y)\\ -
\int_{\partial F}\!\int_{\partial F}\!\!G_{\T^3}(x,y)\tilde\vphi_h(x)\tilde\vphi_h(y)\, d\Ha^{2}(x) d\Ha^{2}(y)\to 0
\end{multline}
as $h\to\infty$. Indeed, thanks to Remark~\ref{rm:potential} this is equivalent to  
\beq\label{differenze}
\int_\Om\bigl(|D z_{h}|^2-|D \tilde z_{h}|^2\bigr)\, dz\to 0\,,
\eeq
where 
$$
-\Delta z_h=\mu_h:=\vphi_h\Ha^2\rtangle\pa E_h\,,\qquad -\Delta \tilde z_h=\tilde\mu_h:=\tilde\vphi_h\Ha^2\rtangle\pa F\,,
$$
under periodicity condition.  In turn, \eqref{differenze}   is clearly implied by 
$$
\mu_h-\tilde\mu_h\to 0 \quad\text{in $H^{-1}(\T^3)$,}
$$
which can be easily checked (see \cite[Proof of Theorem~3.9]{AFM} for the details). 

Finally, we observe that since $p>2$, the Sobolev Embedding theorem and the $W^{2,p}$-convergence of $\pa E_h$ to $\pa F$ imply
\beq\label{proj1000}
 \int_{\pa E_h}|B_{\pa E_h}|^2\vphi_h^2\, d\Ha^2-\int_{\pa F}|B_{\pa F}|^2\tilde \vphi_h^2\, d\Ha^2\to 0\,.
 \eeq
Combining  \eqref{proj2}, \eqref{proj3}, and \eqref{proj1000} we  conclude that all terms 
 of $\pa^2J(E_h)[\vphi_h]$ are asympotically close to the corresponding terms of $\pa^2J(E)[\tilde\vphi_h]$ and thus
$$
\pa^2J(E_h)[\vphi_h]-\pa^2J(F)[\tilde\vphi_h]\to 0\,.
$$
Recalling \eqref{liminf},  we have  a contradiction to \eqref{bystep1}.  This establishes \eqref{c2unif} and concludes the proof of the lemma.
\end{proof}

\begin{proof}[Proof of Lemma~\ref{calculations}]
In the following  $\Psi$ and $\Psi_t$ are as in Definition~\ref{def:smoothflow} and the subscript $t$ stands for the subscript $E_t$. 
We denote by $X_t$ the associated velocity field, that is, $X_t:=\dot\Psi_t\circ\Psi_t^{-1}$.
In particular, by \eqref{MSnl} we have that 
\beq\label{xtnu}
X_t\cdot \nu_t=[\pa_{\nu_t}w_t]\qquad\text{ on $\pa E_t$.}
\eeq
Fix $t\in (0,T)$,  set $\Phi_s:=\Psi_{t+s}\circ \Psi_t^{-1}$, and note that  $(\Phi)_{s\in (-t, T-t)}$ is an admissible  one-parameter family  of diffeomorphisms according to Definition~\ref{def:admissibleX}.  Then we may apply Theorem~\ref{th:12var} to get
\begin{align*}
\frac{d}{dt}J(E_t)&\stackrel{\phantom{\eqref{WE}}}{=}\frac{d}{ds}J(\Phi_s(E_t))_{\bigl|_{s=0}}=\int_{\pa E_t}(H_{t}+4\gamma v_t)X_t\cdot \nu_t\, d\Ha^{2} \\
&\stackrel{\eqref{WE}}{=} \int_{\pa E_t}w_t\, X_t\cdot \nu_t\, d\Ha^{2}
\stackrel{\eqref{xtnu}}{=} \int_{\pa E_t}w_t [\pa_{\nu_t}w_t]\, d\Ha^{2} \\
&\stackrel{\phantom{\eqref{WE}}}{=}-\int_{\T^3}|Dw_t|^2\, dx\,,
\end{align*}
where the last equality follows from integration by parts and the fact that $w_t$ is harmonic in $\T^3\setminus \pa E_t$. This establishes \eqref{der of J}.
In order to get \eqref{der of dw}, we need to introduce some auxiliary functions: For each $t\in (0, T)$, we let $d_{t}$ denote the signed distance function from $E_t$, which, we recall, is smooth in a suitable tubular neighborhood of $\pa E_t$.  We then set $\nu_t:=Dd_t$, $H_t:=\Delta d_t=\Div\nu_t$, and $B_t:=D^2d_t= D\nu_t$. Note that    $\nu_t$, $H_t$, and $B_t$ represent smooth extensions of  the outer unit normal field,  the mean curvature  and the second fundamental form, respectively,  to a    neighborhood of $\pa E_t$.
 We start by recalling the following identity (see \cite[Lemma 3.8]{CMM}):
\beq\label{dnuacca}
\partial_{\nu_t} H_t = DH_t\cdot \nu_t = - |B_t|^2 \qquad\text{on $\pa E_t$} 
\eeq
and
\beq\label{nupunto}
\dot{\nu}_t:=\frac{\partial}{\partial s}\nu_{t+s}\Bigl|_{s=0}=-D_\tau(X_t\cdot\nu_t)=-D_\tau\bigl([\pa_{\nu_t}w_t]\bigr)  \qquad\text{on $\pa E_t$,} 
\eeq
where the last equality follows again by \eqref{xtnu}.
Moreover, by differentiating with respect to $s$ the identity $D\nu_{t+s}[\nu_{t+s}]=0$, we get $D\dot{\nu_t}[\nu_t]+D\nu_t[\dot{\nu_t}]=0$. Multiplying the latter equality  by $\nu_t$ and recalling that $D\nu_t$ is symmetric we get 
$D\dot{\nu_t}[\nu_t]\cdot \nu_t=-D\nu_t[\nu_t]\cdot \dot{\nu_t}=0$. 
In turn, this implies that 
\beq\label{divtdiv}
\Div_{\tau}\dot{\nu_t}=\Div \dot{\nu_t} \qquad\text{on $\pa E_t$.}
 \eeq
Also,
\beq\label{CMM3}
\begin{split} 
 \frac{\partial }{\partial s} (H_{{t+s}}\circ \Phi_s)\Bigl|_{s=0} & \stackrel{\phantom{\eqref{nupunto}}}{=}   \dot{H}_t + DH_t\cdot X_t= \\
 &\stackrel{\eqref{divtdiv}}{=}\Div_\tau \dot{\nu}_t+ \partial_{\nu} H_t (X_t\cdot \nu_t)+ D_\tau H_t\cdot X_t\\
 &\stackrel{\eqref{dnuacca}}{=}\Div_\tau \dot{\nu}_t-  |B_t|^2 [\partial_{\nu_t} w_t] + D_\tau H_t\cdot X_t\\
&\stackrel{\eqref{nupunto}}{=}- \Delta_\tau [\partial_{\nu_t} w_t] -  |B_t|^2 [\partial_{\nu_t} w_t] + D_\tau H_t\cdot X_t
  \end{split} 
\eeq
We can now compute
\begin{equation}
\label{long calculation}
\begin{split}
&\frac{d}{ds}\left(\frac{1}{2}   \int_{E_{t+s}} |D w_{t+s}|^2\, dx \right) \Bigl|_{s=0} =   \frac{d}{ds}  \left(\frac{1}{2}   \int_{E_{t}} |(D w_{t+s})\circ \Phi_s|^2 \, J \Phi_s \, dx \right) \Bigl|_{s=0} \\
&= \frac{1}{2}   \int_{E_{t}} |D w_t |^2 \diver X_t \, d x+   \int_{E_{t}}   Dw_t\cdot \left(  D^2 w_t [X_t]  +  D \dot{w}_t \right)\, dx \\
&= \frac{1}{2}   \int_{E_{t}} \diver( |D w_t |^2  X_t) \, dx+  \int_{E_{t}}    D w_t\cdot  D \dot{w}_t  dx \\
&= \frac{1}{2}   \int_{\partial E_{t}}  |D w^-_t |^2  X_t \cdot \nu_t\, d \Ha^{2}   + \int_{\partial E_{t}}   \dot{w}^-_t  \partial_{\nu_t} w^-_t \, d \Ha^{2}  \\
&= \frac{1}{2}   \int_{\partial E_{t}}  |D w^-_t |^2  [ \partial_{\nu_t} w_t ] \, d \Ha^{2}  + \int_{\partial E_{t}}   \dot{w}^-_t  \partial_{\nu_t} w^-_t \, d \Ha^{2}\,.
\end{split}
\end{equation}
In order to write $\dot{w}^-_t$ explicitly we use
\[
w_{t+s}^-= H_{t+s} + 4\gamma\, v_{{t+s}}  \qquad \text{on } \, \partial E_{t+s}\,,
\] 
which in turn is equivalent to 
\[
w^-_{t+s}\circ \Phi_s= H_{t+s}\circ\Phi_s + 4\gamma\, v_{t+s}\circ\Phi_s \qquad \text{on } \, \partial E_{t}.
\]
By differentiating the above identity with respect to $s$ at $s=0$, we get 
\[
\dot{w}^-_t +  Dw^-_t\cdot X_t = \dot{H}_t + DH_t \cdot X_t + 4\gamma \dot{v}_t + 4\gamma\,  Dv_t\cdot  X_t  \qquad \text{on } \, \partial E_{t}.
\]
We  now use \eqref{CMM3} (and of course \eqref{xtnu}) to get
\beq\label{w dot}
\begin{split}
\dot{w}_t^- =&  -(\partial_{\nu_t} w^-_t) [\partial_{\nu_t} w_t]   - \Delta_\tau [\partial_{\nu_t} w_t] -  |B_t|^2 [\partial_{\nu_t} w_t] \\
&+ 4\gamma\, \dot{v}_t + 4\gamma\, \partial_{\nu_t} v_t   [\partial_{\nu_t} w_t]+ D_\tau (H_t+ 4\gamma\, v_t- w_t )\cdot X_t \\
= &  -(\partial_{\nu_t} w^-_t) [\partial_{\nu_t} w_t]   - \Delta_\tau [\partial_{\nu_t} w_t] -  |B_t|^2 [\partial_{\nu_t} w_t] \\
&+ 4\gamma\, \dot{v}_t + 4\gamma\, \partial_{\nu_t} v_t   [\partial_{\nu_t} w_t] \qquad \text{on } \, \partial E_{t}\,,
\end{split}
\eeq
where in the last equality we have used the fact that $w_t=H_t+ 4\gamma\, v_t$ on $\pa E_t$.
Therefore from \eqref{dot v}, \eqref{long calculation} and \eqref{w dot} we get 
\begin{equation} \label{interior}
\begin{split}
\frac{d}{dt} \left(\frac{1}{2} \int_{E_t} |D w_t|^2\, dx \right) = &-\int_{\partial E_t} \partial_{\nu_t} w^-_t\,   \Delta_\tau [\partial_{\nu_t} w_t]\, d \Ha^{2} - \int_{\partial E_t} |B_t|^2 \, \partial_{\nu_t} w_t^-\,  [\partial_{\nu_t} w_t]  \, d \Ha^{2} \\
&+8\gamma   \int_{\partial E_t}\int_{\partial E_t} G_{\T^3}(x,y) \,\partial_{\nu_t} w_t^-(x)\, [\partial_{\nu_t} w_t(y)] \, d \Ha^{2}(y) d \Ha^{2}(x) \\
&+4\gamma  \int_{\partial E_t} \partial_{\nu_t} v_t\,\partial_{\nu_t} w_t^-\,  [\partial_{\nu_t} w_t]  \, d \Ha^{2} \\
&+\frac{1}{2}   \int_{\partial E_{t}}  |D w_t^- |^2  [\partial_{\nu_t} w_t]  \, d \Ha^{2} - \int_{\partial E_t} (\partial_{\nu_t} w_t^-)^2 [\partial_{\nu_t} w_t]   \, d \Ha^{2} .
\end{split}
\end{equation}

The analogous calculations  in $\T^3 \setminus E_t$ yield
\begin{equation} \label{exterior}
\begin{split}
\frac{d}{dt} \left(\frac{1}{2} \int_{\T^3 \setminus E_t} |D w_t|^2\, dx \right) &= \int_{\partial E_t} \partial_{\nu_t} w_t^+\,   \Delta_\tau [\partial_{\nu_t} w_t]\, d \Ha^{2} + \int_{\partial E_t} |B_t|^2  \partial_{\nu_t} w_t^+\,  [\partial_{\nu_t} w_t]  \, d \Ha^{2} \\
&-8\gamma   \int_{\partial E_t}\int_{\partial E_t} G_{\T^3}(x,y)\, \partial_{\nu_t} w_t^+(x)\, [\partial_{\nu_t} w_t(y)] \, d \Ha^{2}(y) d \Ha^{2}(x) \\
&-4\gamma  \int_{\partial E_t} \partial_{\nu_t} v_t\, \partial_{\nu_t} w_t^+\,  [\partial_{\nu_t} w_t]  \, d \Ha^{2} \\
&-\frac{1}{2}   \int_{\partial E_{t}}  |D w^+_t |^2  [\partial_{\nu_t} w_t]  \, d \Ha^{2} + \int_{\partial E_t} (\partial_{\nu_t} w_t^+)^2[\partial_{\nu_t} w_t]   \, d \Ha^{2} .
\end{split}
\end{equation}

Combining \eqref{interior} and \eqref{exterior},    integrating by   parts, and recalling \eqref{J2} we get
\[
\begin{split}
\frac{d}{dt} \left(\frac{1}{2} \int_{\T^3} |D w_t|^2\, dx \right) = &-  \pa^2J(E_t)\left[[\pa_{\nu_t}w_t\vphantom{^{^4}}]\right]
+\int_{\partial E_t}  \left((\partial_{\nu_t} w^+_t)^2 - (\partial_{\nu_t} w_t^-)^2  \right)[\partial_{\nu_t} w_t]   \, d \Ha^{2} \\
 &-\frac{1}{2}   \int_{\partial E_{t}}  (|D w^+_t |^2  - |D w^-_t |^2 ) [\partial_{\nu_t} w_t]  \, d \Ha^{2}.
\end{split}
\]
The result follows from the identity
\[
|D w^+_t |^2  - |D w^-_t |^2  = (\partial_{\nu_t} w^+_t)^2 - (\partial_{\nu_t} w_t^-)^2  = (\partial_{\nu_t} w_t^+ + \partial_{\nu_t} w_t^- ) [\partial_{\nu_t} w_t]  .
\]

\end{proof}

We now prove Proposition~\ref{harmonic estimates}.

\begin{proof}[Proof of Proposition~\ref{harmonic estimates}] To simplify the notation, throughout the proof we write $\nu$ instead of $\nu_E$.

\textbf{Proof of (i)}:  
  Observe that we may write $u$ as 
\[
u(x) = \int_{\partial E} G_{\T^3}(x,y)f(y) \, d\Ha^{2}(y).
\] 
Note that  $G_{\T^3}(x,y) = h(x-y) + r(x-y)$ where  $h$ is one-periodic, smooth away from $0$ and $h(t)=\frac{1}{4\pi|t|}$ in a neighborhood of $0$, while $r$ is smooth and one-periodic.
  The conclusion then follows since
 for $v(x):= \int_{\partial E} \frac{f(y)}{|x-y|} \, d\Ha^{2}(y)$ it holds
\[
 \|v\|_{L^p(\partial E)} \leq C  \|f\|_{L^p(\partial E)}.
\]

\textbf{Proof of (ii)}: Here we  adapt the proof of \cite{JK} to the periodic setting.
First observe that since $u$ is harmonic in $E\subset\T^3$ we have 
\begin{equation}
\label{harmonic lemma 12}
\diver\left(  2(Du\cdot x)  Du -|Du|^2x + u Du  \right) = 0.
\end{equation}
 Moreover, by the $C^{1,\alpha}$-regularity of $\partial E$ there exist $r>0$, $C_0$ and $N$, depending on the $C^{1,\alpha}$ bounds on $\partial E$,  such that 
 we may cover $\partial E$ with at most $N$ balls $B_r(x_k)$ such that, up to a translation,
\begin{equation}
\label{harmonic lemma 1}
\frac{1}{C_0} \leq  x \cdot \nu(x)  \leq C_0 \qquad\text{ for $x\in \pa E \cap B_{2r}(x_k)$.}
\end{equation}
  Therefore if $0 \leq \vphi_k \leq 1$ is a smooth function with compact support in $B_{2r}(x_k)$ such that $\vphi_k \equiv 1$
 in $B_r(x_k)$ and $|D \vphi_k| \leq C/r$, by integrating 
\[
\diver\left( \vphi_k \left(2(Du\cdot x)  Du -|Du|^2x +  u Du \right)  \right) 
\]
over $E$ and using \eqref{harmonic lemma 12} we easily get
\[
\begin{split}
\int_{\partial E} &2\vphi_k |\partial_\nu u|^2   (x \cdot \nu)  -  \vphi_k | D_\tau  u|^2   (x \cdot \nu)  \, d \Ha^{2}  \\
&=  - \int_{\partial E}  \vphi_k  u  \partial_\nu u  \, d \Ha^{2}- 2 \int_{\partial E}  \vphi_k  ( D_\tau  u \cdot x)  \partial_\nu u   \, d \Ha^{2} \\
&\,\,\,\,\,\,+\int_E D\vphi_k \cdot \left( 2(Du\cdot x)   Du -|Du|^2x +  u Du  \right) \, dx.
\end{split}
\]
This implies using the Poincar\'e inequality on the torus (recall that $u$ has zero average) and \eqref{harmonic lemma 1}
\[
\begin{split}
\int_{\partial E \cap B_r(x_k)} |\partial_\nu u|^2\, d \Ha^{2}   &\leq C \int_{\partial E} (u^2 + | D_\tau    u|^2) \, d \Ha^{2} + C\int_{\T^3} (u^2 +|Du|^2) \, dx\\
&\leq  C \int_{\partial E} (u^2 + | D_\tau    u|^2) \, d \Ha^{2} + C\int_{\T^3} |Du|^2 \, dx.
\end{split}
\]
Adding up all the estimates and repeating the argument for $\T^3 \setminus E$ we get
\[
\int_{\partial E } (|\partial_\nu u^-|^2 + |\partial_\nu u^+|^2)\, d \Ha^{2}  \leq  C \int_{\partial E} (u^2 + | D_\tau    u|^2) \, d \Ha^{2} + C\int_{\T^3} |Du|^2 \, dx.
\]
The result follows by observing that 
\[
\int_{\T^3} |Du|^2 \, dx = \int_{\partial E} u (\partial_\nu u^- - \partial_\nu u^+) \, d \Ha^{2}. 
\]

\textbf{Proof of (iii)}: The result would follow from the boundary estimates on $C^1$-domains established in \cite{FJR}. However, it turns out that in the case of $C^{1,\alpha}$-domains the argument can be greatly simplified, as shown in the following.
 
Let us define
\[
Kf(x) :=  \int_{\partial E} D_x G_{\T^3}(x,y) \cdot \nu(x) f(y) \, d\Ha^{2}(y)\,.
\]
We first show that the above integral is defined for every $x \in \partial E$ and  that 
\begin{equation}
\label{flow lemma 1}
\|Kf\|_{L^p(\partial E)} \leq C \|f\|_{L^p(\partial E)}.
\end{equation}
By the decomposition recalled at the beginning of the proof we have $D_xG_{\T^3}(x,y) = D_xh(x-y) + D_xr(x-y)$, where
$D_xh(x-y)=-\frac{1}{4\pi}\frac{x-y}{|x-y|^3}$ in a neighborhood of the origin and $D_xr(x-y)$ is smooth. 
Thus, by a standard  partition of unity argument we may localize the estimate and reduce to show that if $\varphi \in C^{1,\alpha}(\R^{2})$  and $U \subset \R^{2}$ is a bounded domain setting
$\Gamma := \{(x', \varphi(x')) \, : \, x' \in U\}$ and 
\[
Tf(x) := \int_\Gamma \frac{ (x-y)\cdot \nu(x) }{|x-y|^{3}} f(y) \, d\Ha^{2}(y) \quad x\in \Gamma,
\] 
where  $\nu$ is the upper normal to $\Gamma$, then $Tf(x)$ is well defined at every $x \in \Gamma$ and 
\[
\|Tf\|_{L^p(\Gamma)} \leq C \|f\|_{L^p(\Gamma)}.
\]
To show this we observe that we may write 
\[
Tf(x) := \int_U \frac{ \vphi(x') - \vphi(y')  -  D\vphi(x')\cdot( x'-y') }{(|x'-y'|^2+ (\vphi(x') -\vphi(y'))^2  )^{3/2}} f(y', \vphi(y')) \, dy'.
\] 
Therefore
\[
\begin{split}
|Tf(x)| &\leq C \int_U  \frac{ |x'-y'|^{1+ \alpha}}{(|x'-y'|^2+ (\vphi(x') -\vphi(y'))^2  )^{3/2}}|f(y', \vphi(y'))| \, dy' \\
&\leq C \int_U  \frac{ |f(y', \vphi(y'))|}{|x'-y'|^{2-\alpha}} \, dy'.
\end{split}
\] 
Thus the estimate \eqref{flow lemma 1} follows from a standard convolution estimate. 

For $x \in  E$  we have
\[
Du(x) = \int_{\partial E} D_xG_{\T^3}(x,y)f(y) \, d\Ha^{2}(y).
\]

Therefore for $x \in \partial E$  it holds
\[
Du(x - t\nu(x)) \cdot  \nu(x) = \int_{\partial E} D_xG_{\T^3}(x -t \nu(x),y) \cdot \nu(x)f(y) \, d\Ha^{2}(y).
\]
We claim that   
\begin{equation}
\label{flow lemma 2}
\lim_{t \to 0+} Du(x - t\nu(x)) \cdot  \nu(x) = Kf(x) +\frac{1}{2}f(x) 
\end{equation}
for every $x \in \partial E$.
Then the lemma follows from \eqref{flow lemma 1} and \eqref{flow lemma 2}. 

To show \eqref{flow lemma 2} we first recall  that for $z \in E$  and for $x \in \partial E$ it holds  
\begin{align}
&\int_{\partial E} D_xG_{\T^3}(z,y) \cdot \nu(y) \, d\Ha^{2}(y)  =   1  \quad \text{and} \nonumber\\
&  \int_{\partial E} D_xG_{\T^3}(x ,y) \cdot \nu(y) \, d\Ha^{2}(y)  =   \frac{1}{2}.  \label{flow lemma divergence} 
\end{align}
Therefore, we may write
\begin{equation} \label{flow lema long}
\begin{split}
 Du(x - t\nu(x)) \cdot  \nu(x) = &\int_{\partial E} D_xG_{\T^3}(x - t\nu(x),y) \cdot \nu(x) (f(y)-f(x)) \, d\Ha^{2}(y) \\
&+ f(x) \int_{\partial E} D_xG_{\T^3}(x - t\nu(x),y) \cdot (\nu(x)-\nu(y)) \, d\Ha^{2}(y) + f(x).
\end{split}
\end{equation}
Let us now prove that 
\begin{align*}
&\lim_{t \to 0} \int_{\partial E} D_xG_{\T^3}(x- t\nu(x),y) \cdot \nu(x) (f(y)-f(x)) \, d\Ha^{2}(y)\\
&\qquad\qquad\qquad\qquad\qquad = \int_{\partial E} D_xG_{\T^3}(x,y) \cdot \nu(x) (f(y)-f(x)) \, d\Ha^{2}(y). 
\end{align*}
To establish this, first observe that since $\pa E$ is $C^1$ then for $|t|$ sufficiently small we have
\beq\label{vesa1}
|x-y-t\nu(x)|\geq\frac12|x-y|\qquad\text{for all $y\in\pa E$}\,.
\end{equation}
Then, in view of  the decomposition of $D_xG$ recalled before, it is enough  show that 
\[
\begin{split}
\lim_{t \to 0} &\int_{\partial E} \frac{(x -y - t\nu(x)) \cdot \nu(x)}{|x-y-t\nu(x)|^3} (f(y)-f(x)) \, d\Ha^{2}(y)  \\
&= \int_{\partial E} \frac{(x -y) \cdot \nu(x)}{|x-y|^3} (f(y)-f(x)) \, d\Ha^{2}(y)\,,
\end{split}
\]
which follows from the Dominated Convergence Theorem, after observing that due to the  $\alpha$-H\"older continuity of $f$ and to \eqref{vesa1}, the absolute value of both integrands can be estimated from above by  $C/|x-y|^{2-\alpha}$ for some constant $C>0$.

Hence \eqref{flow lemma 2} follows by letting $t \to 0$ in \eqref{flow lema long} and recalling \eqref{flow lemma divergence}.

\textbf{Proof of (iv)}: Fix $p>2$ and $\beta\in (0, \frac{p-2}{p})$. As before, due to the properties of the Green's function it is sufficient to establish the statement for the 
function 
$$
 v(x):=\int_{\pa E}\frac{f(y)}{|x-y|}\, d\Ha^2(y)\,.
$$
For $x_1$, $x_2\in \pa E$ we have
$$
|v(x_1)-v(x_2)|\leq \int_{\pa E}|f(y)|\frac{\big||x_1-y|-|x_2-y|\big|}{|x_1-y|\, |x_2-y|}\, d\Ha^2(y)\,.
$$
In turn, by an elementary inequality, we have
$$
\frac{\big||x_1-y|-|x_2-y|\big|}{|x_1-y|\, |x_2-y|}\leq C(\beta)\frac{\big||x_1-y|^{1-\beta}+|x_2-y|^{1-\beta}\big|}{|x_1-y|\, |x_2-y|}|x_1-x_2|^\beta\,.
$$
Thus, by H\"older inequality we have
\begin{align*}
|v(x_1)-v(x_2)| &\leq C(\beta) \int_{\pa E}|f(y)|\frac{\big||x_1-y|^{1-\beta}+|x_2-y|^{1-\beta}\big|}{|x_1-y|\, |x_2-y|}\, d\Ha^2(y)\,\, |x_1-x_2|^\beta \\
&\leq C'(\beta)\|f\|_{L^p} |x_1-x_2|^\beta\,, 
\end{align*}
where we set 
$$
C'(\beta):=C(\beta)\biggl(2\sup_{z_1,\, z_2\in\pa E} \int_{\pa E}\frac{1}{|z_1-y|^{\beta p'}\, |z_2-y|^{p'}}\, d\Ha^2(y)\biggr)^{\frac{1}{p'}}\,.
$$
\textbf{Proof of (v)}:  We start by observing that
\[
\| f\|_{L^2(\partial E)} \leq C \|f\|_{H^1(\partial E)}^{^{\frac{1}{2}}}  \|f\|_{H^{-1}(\partial E)}^{\frac12},
\]
where $C$ is a constant depending only on the $C^{1,\alpha}$ bounds on $\partial E$. If $p>2$ we have also, see Lemma~\ref{interpolation},
\[
\| f\|_{L^p(\partial E)} \leq C \|f\|_{H^1(\partial E)}^{^{\frac{p-2}{p}}}  \|f\|_{L^2(\partial E)}^{\frac2p}.
\]
Therefore, by combining the two previous inequalities we get that for $p\geq2$
\[
\| f\|_{L^p(\partial E)} \leq C \|f\|_{H^1(\partial E)}^{^{\frac{p-1}{p}}}  \|f\|_{H^{-1}(\partial E)}^{\frac1p}.
\]
 Hence the claim follows once we show
\[
\|f \|_{H^{-1}(\partial E)} \leq C  \|u\|_{L^2(\partial E)}.
\]
Let us fix  $\vphi \in H^1(\partial E)$ and with abuse of notation denote its harmonic extension to $\T^3$  by $\vphi$. Then by integrating  by parts twice  and by  (ii) we get 
\[
\begin{split}
\int_{\partial E} \vphi f \, d \Ha^2 &= - \int_{\partial E} u [\partial_\nu \vphi]  \, d \Ha^2   \leq \|u \|_{L^2(\partial E)} \|[\partial_\nu \vphi] \|_{L^2(\partial E)} \\
&\leq   \|u \|_{L^2(\partial E)} \left( \|\partial_\nu \vphi^+ \|_{L^2(\partial E)} + \|\partial_\nu \vphi^- \|_{L^2(\partial E)} \right) \\
&\leq C \|u\|_{L^2(\partial E)} \|\vphi \|_{H^1(\partial E)}.
\end{split}
\] 
Therefore
\[
\| f\|_{H^{-1}(\partial E)}  = \sup_{\|\vphi\|_{H^1(\partial E)}\leq 1} \int_{\partial E} \vphi f \, d \Ha^2  \leq C \|u \|_{L^2(\partial E)}.
\]

\end{proof}

We now prove Lemma~\ref{w52conv}. Before that we recall that for $E\subset\T^3$  the $H^{\frac12}(\pa E)$ {\it Gagliardo seminorm} of a function $f\in L^2(\pa E)$ is defined by setting
$$
[f]_{\frac12,\pa E}^2:=\int_{\pa E}\,d\Ha^2(x)\int_{\pa E}\frac{|f(x)-f(y)|^2}{|x-y|^3}\,d\Ha^2(y)\,.
$$
Starting from this definition and using a standard partition of unity argument in order to straighten the boundary of $E$ locally, the reader may reconstruct the proof of the following technical lemma.
\begin{lemma}\label{nicola1}
Let $E\subset\T^3$ be an open set of class $C^{1,\alpha}$ for some $\alpha\in(0,1)$. For every $\gamma\in[0,\frac12)$, there exists a constant $C$ depending only on $\gamma$ and on the $C^{1,\alpha}$ bounds on $\partial E$ such that if $f\in H^{\frac12}(\pa E)$ and $g\in W^{1,4}(\pa E)$ then
$$
[fg]_{\frac12}\leq\big([f]_{\frac12}\|g\|_{L^\infty}+\|f\|_{L^{\frac{4}{1+\gamma}}}\|g\|_{L^\infty}^\gamma\|D_\tau g\|_{L^4}^{1-\gamma}\big)\,.
$$
\end{lemma}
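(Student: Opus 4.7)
The approach is to reduce to the flat setting in $\R^{2}$. The $C^{1,\alpha}$ regularity of $\pa E$ provides a finite atlas of local graph parametrizations in which both the intrinsic distance on $\pa E$ and the ambient Euclidean distance are comparable up to a uniform constant; together with a subordinate smooth partition of unity, this reduces matters to establishing the analogous inequality for $f\in H^{1/2}(\Omega)$ and $g\in W^{1,4}(\Omega)\cap L^{\infty}(\Omega)$ on a bounded smooth domain $\Omega\subset\R^{2}$ (chart Jacobians, cut-off functions, and the Sobolev extension from $\Omega$ to $\R^{2}$ used below are absorbed into the final constant $C$).

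The starting point is the product decomposition
\[
(fg)(x)-(fg)(y)=g(x)\bigl(f(x)-f(y)\bigr)+f(y)\bigl(g(x)-g(y)\bigr),
\]
which via the triangle inequality gives
\[
[fg]_{\frac12}^{2}\leq 2\|g\|_{\infty}^{2}[f]_{\frac12}^{2}+2\int_{\Omega}\!\int_{\Omega}\frac{|f(y)|^{2}|g(x)-g(y)|^{2}}{|x-y|^{3}}\,dx\,dy.
\]
The first term is already of the desired form. To control the double integral, combine the trivial bound $|g(x)-g(y)|\leq 2\|g\|_{\infty}$ raised to the $2\gamma$ power with the Hajlasz pointwise inequality (valid for $g\in W^{1,4}(\R^{2})$)
\[
|g(x)-g(y)|\leq C|x-y|\bigl(M(|Dg|)(x)+M(|Dg|)(y)\bigr)
\]
raised to the $2-2\gamma$ power, where $M$ is the Hardy-Littlewood maximal operator. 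This yields
\[
|g(x)-g(y)|^{2}\leq C\|g\|_{\infty}^{2\gamma}|x-y|^{2(1-\gamma)}\bigl(M(|Dg|)(x)^{2(1-\gamma)}+M(|Dg|)(y)^{2(1-\gamma)}\bigr),
\]
so that the integral is bounded by $C\|g\|_{\infty}^{2\gamma}(A_{1}+A_{2})$ with
\[
A_{i}=\int\!\int\frac{|f(y)|^{2}M(|Dg|)(z_{i})^{2(1-\gamma)}}{|x-y|^{1+2\gamma}}\,dx\,dy,\qquad z_{1}=y,\;z_{2}=x.
\]

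The term $A_{1}$ is immediate: since $\gamma<\frac12$ and $\Omega$ is bounded, $\int_{\Omega}|x-y|^{-(1+2\gamma)}\,dx\leq C$ uniformly in $y$, so Fubini followed by H\"older with exponents $\tfrac{2}{1+\gamma}$ and $\tfrac{2}{1-\gamma}$ gives $A_{1}\leq C\|f\|_{L^{4/(1+\gamma)}}^{2}\|M(|Dg|)\|_{L^{4}}^{2(1-\gamma)}\leq C\|f\|_{L^{4/(1+\gamma)}}^{2}\|Dg\|_{L^{4}}^{2(1-\gamma)}$, where the last step uses the maximal inequality. For $A_{2}$, write $A_{2}=\int M(|Dg|)^{2(1-\gamma)}\,(|f|^{2}\ast K)\,dx$ with $K(z)=|z|^{-(1+2\gamma)}$; this is a Riesz kernel of order $1-2\gamma>0$ in $\R^{2}$. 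Hardy-Littlewood-Sobolev provides $\||f|^{2}\ast K\|_{L^{2/(1+\gamma)}}\leq C\||f|^{2}\|_{L^{2/(2-\gamma)}}=C\|f\|_{L^{4/(2-\gamma)}}^{2}$, and H\"older with exponents $\tfrac{2}{1-\gamma}$ and $\tfrac{2}{1+\gamma}$ together with the maximal inequality yield $A_{2}\leq C\|Dg\|_{L^{4}}^{2(1-\gamma)}\|f\|_{L^{4/(2-\gamma)}}^{2}$. Since $4/(2-\gamma)\leq 4/(1+\gamma)$ and $\Omega$ is bounded, $\|f\|_{L^{4/(2-\gamma)}}\leq C\|f\|_{L^{4/(1+\gamma)}}$, completing the bound.

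The principal obstacle is the range restriction $\gamma\in[0,\tfrac12)$: it is exactly what makes $|x-y|^{-(1+2\gamma)}$ locally integrable in $\R^{2}$ (needed for $A_{1}$) and the Riesz order $1-2\gamma$ positive (needed for Hardy-Littlewood-Sobolev in $A_{2}$); both estimates degenerate at the endpoint $\gamma=\tfrac12$. Combining the bounds on $A_{1}$ and $A_{2}$ with the first summand and taking square roots delivers the claimed inequality, with the implicit constant $C$ absorbing those from the maximal inequality, Hardy-Littlewood-Sobolev, the compactness of $\Omega$, and the geometric comparability constants coming from the straightening atlas.
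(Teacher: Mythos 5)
The paper does not actually give a proof of Lemma~\ref{nicola1} (it leaves the verification to the reader), so I evaluate your argument on its own merits. The overall structure — straightening $\partial E$ via charts and a partition of unity, the Leibniz-type splitting $(fg)(x)-(fg)(y)=g(x)(f(x)-f(y))+f(y)(g(x)-g(y))$, the Hajlasz pointwise inequality raised to the power $2(1-\gamma)$ balanced against the trivial $L^\infty$ bound raised to the power $2\gamma$, and the separate treatment of the two resulting integrals $A_1$, $A_2$ — is sound and is essentially what the authors have in mind. The treatment of $A_1$ is correct.

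The treatment of $A_2$, however, has a genuine gap at $\gamma=0$. You invoke Hardy--Littlewood--Sobolev for the Riesz kernel $K=|z|^{-(1+2\gamma)}$ with source exponent $p=\frac{2}{2-\gamma}$, but HLS requires $p>1$, and at $\gamma=0$ one has $p=1$: the inequality $\|\,|f|^2\ast |\cdot|^{-1}\|_{L^2(\R^2)}\le C\|\,|f|^2\|_{L^1(\R^2)}$ is false (only the weak-type endpoint holds). You correctly flag the degeneration at $\gamma=\tfrac12$ but not the one at $\gamma=0$. Since the paper explicitly applies the lemma with $\gamma=0$ in the proof of Lemma~\ref{noia}, the endpoint $\gamma=0$ cannot be discarded, and as written your proof does not cover it.

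The repair is short and in fact simplifies the argument: HLS is unnecessary, because after localization the kernel $K$ is integrable. Writing $A_2=\int |f(y)|^2\bigl(K\ast M(|D g|)^{2(1-\gamma)}\bigr)(y)\,dy$ and applying Young's convolution inequality with the kernel in $L^1$ gives
$$
\bigl\|K\ast M(|Dg|)^{2(1-\gamma)}\bigr\|_{L^{2/(1-\gamma)}}\le \|K\|_{L^1(B_R)}\,\bigl\|M(|Dg|)\bigr\|_{L^4}^{2(1-\gamma)}\,,
$$
where $\|K\|_{L^1(B_R)}=\int_{B_R}|z|^{-(1+2\gamma)}\,dz<\infty$ for every $\gamma\in[0,\tfrac12)$ — the very same quantity you already used to control $A_1$. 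One further H\"older with exponents $\tfrac{2}{1+\gamma}$ and $\tfrac{2}{1-\gamma}$ yields $A_2\le C\|f\|_{L^{4/(1+\gamma)}}^2\|D_\tau g\|_{L^4}^{2(1-\gamma)}$, with the correct exponent $\tfrac{4}{1+\gamma}$ appearing directly (so the ancillary step $\|f\|_{L^{4/(2-\gamma)}}\le C\|f\|_{L^{4/(1+\gamma)}}$ also becomes unnecessary). With this replacement, your argument is correct uniformly on $\gamma\in[0,\tfrac12)$.
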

Next lemma is probably well known to the expert, but we give its proof for reader's convenience
\begin{lemma}l\label{noia}
Let $F,U$ be as in Lemma~\ref{w52conv}. Let $E$ be a set in $\mathfrak{h}^{1,\alpha}_M(F, U)$, for some $\alpha>0$. If $H_{\pa E}\in H^{\frac12}(\pa E)$, then $E$ is of class $W^{\frac52,2}$ and
$$
\|\psi_E\|_{W^{\frac52,2}(\pa F)}\leq C(M)\big(1+\|H_{\pa E}\|_{H^{\frac12}(\pa E)}^2\big)\,,
$$
where $\psi_E$ is defined as in \eqref{front}.
\end{lemma}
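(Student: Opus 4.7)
The plan is to read the identity defining $H_{\pa E}$ as a quasilinear elliptic equation for the graph function $\psi_E$ on the smooth manifold $\pa F$, and to bootstrap its regularity. Writing $\Psi(x):=x+\psi_E(x)\nu_F(x)$ and computing in local charts flattening $\pa F$, a standard manipulation yields the quasilinear expression
\beq\label{noia-plan}
H_{\pa E}\circ \Psi \;=\; a^{ij}\!\big(x,\psi_E, D_\tau\psi_E\big)\,\delta_i \delta_j \psi_E \;+\; b\!\big(x,\psi_E, D_\tau\psi_E\big),
\eeq
where $a^{ij}, b$ are smooth functions of their arguments (depending only on the smooth geometry of $\pa F$) and the matrix $(a^{ij})$ is uniformly elliptic while $|D\psi_E|$ stays bounded. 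Since $\|\psi_E\|_{C^{1,\alpha}}\leq M$, the coefficients $\tilde a^{ij}(x):=a^{ij}(x,\psi_E,D_\tau\psi_E)$ and $\tilde b(x):=b(x,\psi_E,D_\tau\psi_E)$ lie in $C^{0,\alpha}(\pa F)$ with norms controlled by $C(M)$; moreover $H_{\pa E}\circ\Psi-\tilde b\in H^{1/2}(\pa F)$, since $\Psi$ is a bi-Lipschitz $C^{1,\alpha}$-diffeomorphism between $\pa F$ and $\pa E$ and $H^{1/2}$ is stable under composition with such maps.

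First I would apply the classical $W^{2,2}$-estimate for linear elliptic operators with $C^{0,\alpha}$ coefficients on the compact manifold $\pa F$ to obtain $\psi_E\in W^{2,2}(\pa F)$ with a linear bound in $\|H_{\pa E}\|_{H^{1/2}(\pa E)}$. Then, using the Sobolev embedding $H^{1/2}(\pa F)\hookrightarrow L^p(\pa F)$ for every $p<4$ valid on the two-dimensional surface $\pa F$, the Calder\'on--Zygmund $W^{2,p}$-theory for linear elliptic operators with H\"older coefficients upgrades this to $\psi_E\in W^{2,p}$ for any fixed $p<4$, still with a linear bound $\|\psi_E\|_{W^{2,p}}\leq C(M)(1+\|H_{\pa E}\|_{H^{1/2}})$. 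In particular $D\psi_E\in W^{1,p}\hookrightarrow C^{0,\beta}$ for some $\beta>0$, and hence $\tilde a^{ij},\tilde b\in W^{1,p}(\pa F)$ for such $p$.

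The decisive step is the fractional bootstrap from $W^{2,p}$ to $W^{5/2,2}$. I would differentiate \eqref{noia-plan} tangentially to obtain, for each $k$, an elliptic equation of the form
\[
\tilde a^{ij}\,\delta_i\delta_j(\delta_k\psi_E) \;=\; \delta_k(H_{\pa E}\circ\Psi) \;-\; (\delta_k \tilde a^{ij})\,\delta_i\delta_j \psi_E \;-\; \delta_k\tilde b,
\]
and apply the variational $H^1$-elliptic estimate for $\delta_k\psi_E$. The first term on the right lies in $H^{-1/2}(\pa F)$, while the two product terms must be bounded in $H^{-1/2}$ by duality against $H^{1/2}$-test functions; here Lemma~\ref{nicola1} enters as the key tool, controlling the Gagliardo seminorm of products such as $\tilde a^{ij}\,\delta_i\delta_j\psi_E$ in terms of $\|D^2\psi_E\|_{L^4}\cdot\|D\psi_E\|_{W^{1,4}}$ plus a top-order contribution proportional to $[D^2\psi_E]_{1/2}\cdot\|\tilde a^{ij}\|_{L^\infty}$. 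Combined with the second-step bound on $\|\psi_E\|_{W^{2,p}}$, this yields the desired estimate $\|\psi_E\|_{W^{5/2,2}}\leq C(M)(1+\|H_{\pa E}\|_{H^{1/2}}^2)$, the quadratic dependence arising precisely from the product $\|D^2\psi_E\|_{L^4}\cdot\|D\psi_E\|_{W^{1,4}}$. The main obstacle is to carry out this fractional multiplication with the correct interpolation exponent in Lemma~\ref{nicola1} and to absorb the resulting top-order contribution into the left-hand side using the uniform ellipticity of $(\tilde a^{ij})$, which is guaranteed because $|D\psi_E|\leq M$.
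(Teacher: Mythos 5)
The overall scheme you propose --- read the curvature identity as a quasilinear elliptic equation for the graph function, get $W^{2,p}$ a priori bounds from Calder\'on--Zygmund theory, and then bootstrap the extra half--derivative using the Gagliardo seminorm and the product estimate of Lemma~\ref{nicola1} --- is indeed the right framework and matches the spirit of the paper's argument. But the step you single out as ``the main obstacle'' is exactly where your proposal has a gap, and your resolution of it does not work.

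You claim that the top--order contribution, schematically $[D^2\psi_E]_{1/2}\,\|\tilde a^{ij}\|_{L^\infty}$, can be absorbed ``using the uniform ellipticity of $(\tilde a^{ij})$, which is guaranteed because $|D\psi_E|\leq M$.'' Uniform ellipticity gives you an ellipticity constant depending on $M$; it does \emph{not} give you a small constant in front of the error term, and so the error term cannot simply be absorbed into the left--hand side of a seminorm inequality. What the paper actually does is quite different: before taking the $H^{1/2}$--seminorm, it straightens $\partial E$ locally over small balls $B'$, so that after this change of coordinates the local height function $\psi$ satisfies $\|D\psi\|_{L^\infty(B')}\leq\e$, with $\e$ as small as desired (this is possible precisely because $\partial E$ is $C^{1,\alpha}$ with a modulus, not just $C^1$). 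In this local chart the equation reads $\Delta(\varphi\psi)-\frac{D^2(\varphi\psi)D\psi D\psi}{1+|D\psi|^2}=\varphi H\sqrt{1+|D\psi|^2}+R$, and the ``off--Laplacian'' coefficient $\frac{D\psi\,D\psi}{1+|D\psi|^2}$ has $L^\infty$--norm $\leq\e^2$. Only then does the application of Lemma~\ref{nicola1} with $\gamma=0$ produce a top--order term $\e^2[D^2(\varphi\psi)]_{1/2}$, and the absorption is made possible by two further ingredients you do not mention: the elementary identity $[\Delta u]_{1/2,\R^2}=[D^2u]_{1/2,\R^2}$ for compactly supported $u$ (turning the Laplacian on the left into the full Hessian seminorm) and the smallness of $\e$. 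Ellipticity alone, with coefficients that merely satisfy $\|\tilde a^{ij}\|_{L^\infty}\leq C(M)$, would leave you with a term on the right that is comparable to the quantity you are trying to estimate, and the argument would stall. Note also that the paper does not need to differentiate the equation in $k$ and pass through $H^{-1/2}$--duality; taking the $[\cdot]_{1/2}$--seminorm of the undifferentiated equation is both sufficient and simpler, and it is within that direct estimate that Lemma~\ref{nicola1} is used.
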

\begin{proof} We assume without loss of generality that $\psi_E$ is smooth. To simplify the notation  we will drop the subscript from $\psi_E$ and $H_{\pa E}$. Fix $\e>0$. By straightening locally the boundary of $F$, we may reduce to the case where the function $\psi$ is defined in a disk $B'\subset\R^2$ and $\|\psi\|_{C^1(B')}\leq\e$. Fix a cut-off function $\varphi$ with compact support in $B'$. Then
\beq\label{noia1}
\Delta(\varphi\psi)-\frac{D^2(\varphi\psi)D\psi D\psi}{1+|D\psi|^2}=\varphi H\sqrt{1+|D\psi|^2}+R(x,\psi,D\psi)\,,
\eeq
where the remainder term $R$ is a smooth Lipschitz function. Then, using Lemma~\ref{nicola1} with $\gamma=0$ and recalling that $\|\psi\|_{C^1}\leq\e$, we estimate
\begin{align*}
[\Delta(\varphi\psi)]_{\frac12}\leq C(M)\big(\e^2[D^2(\varphi\psi)]_{\frac12}+[H]_{\frac12}(1\!+\!\|D\psi\|_{L^\infty})+\|H\|_{L^4}(1\!+\!\|\psi\|_{W^{2,4}})+1+\|\psi\|_{W^{2,4}}\big).
\end{align*}
Observe that by Calder\'on-Zygmund estimates $\|\psi\|_{W^{2,4}(B')}\leq C(M)(1+\|H\|_{L^4(\pa E)})$. Moreover, a simple integration by part argument shows that if $u$ is a smooth function with compact support in $\R^2$ then
$$
[\Delta u]_{\frac12,\R^2}=[D^2u]_{\frac12,\R^2}\,.
$$
Thus, choosing $\e$ sufficiently small, we may conclude that
$$
[D^2(\varphi\psi)]_{\frac12}\leq C(M)\big(1+[H]_{\frac12,\pa E}+\|H\|_{L^4(\pa E)}^2\big)\leq C(M)\big(1+\|H\|_{H^{\frac12}(\pa E)}^2\big)\,.
$$
From this estimate the conclusion follows.
\end{proof}
\begin{proof}[Proof of Lemma~\ref{w52conv}] {\bf Step 1.}
Throughout the proof we write $w_n$, $H_n$, and $v_n$ instead of $w_{E_n}$, $H_{\pa E_n}$, and $v_{E_n}$, respectively. Moreover we denote by $\hat w_n$ the average of $w_n$ in $\T^3$ and we set $\tilde w_n=\medintinrigo_{\pa E_n}w_n\,d\Ha^2$ and $\tilde H_n=\medintinrigo_{\pa E_n}H_n\,d\Ha^2$.
%Moreover, given any function $g$ defined on $\pa E_n$ we denote by $\bar g$  the piecewise constant function, which equals  $\medintinrigo_{\Gamma^i_n}g\, d\Ha^2$ on each connected component $\Gamma^i_n$ of $\pa E_n$, with $i=1,\dots, k$. 
First, recall that 
\beq\label{w521}
w_n=H_n+4\gamma v_n \quad\text{on }\pa E_n\qquad\text{and}\qquad \sup_n\|v_n\|_{C^{1,\alpha}(\T^3)}<+\infty\,.
\eeq
The last bound follows from  standard elliptic estimates. Moreover, from the trace inequality
\beq\label{w522}
\|w_n-\tilde w_n\|^2_{H^{\frac12}(\pa E_n)}\leq\|w_n-\hat w_n\|^2_{H^{\frac12}(\pa E_n)}\leq C\int_{\T^3}|Dw_n|^2\, dx
\eeq
with $C$ depending only on the $C^1$-bounds on $\pa E_n$. 
We  claim that  
\beq\label{claim1111}
\sup_{n}\|H_n\|_{H^{\frac12}(\pa E_n)}<\infty.
\eeq
To see this note that by the uniform $C^1$-bounds on $\pa E_n$, we may find a fixed cylinder of the form  $C:=B'\times(-L,L)$, with $B'\subset\R^{2}$  a ball centered at the origin, and functions $f_n$, with
\beq\label{w523}
\sup_{n}\|f_n\|_{C^1(\overline B')}<+\infty\,,
\eeq
such that  $\pa E_n\cap C=\{(x',x_n)\in B'\times(-L,L):\, x_n= f_n(x')\}$ with respect to a suitable  coordinate frame (depending on $n$).  
Thus we have
\begin{align*}
\int_{B'}(H_n-\tilde H_n)\, dx'+ \tilde H_n|B'| &= \int_{B'}\Div\biggl(\frac{\nabla_{x'} f_n}{\sqrt{1+|\nabla_{x'} f_n|^2}}\biggr)\, dx'\\
&=\int_{\pa B'}\frac{\nabla_{x'} f_n}{\sqrt{1+|\nabla_{x'} f_n|^2}}\cdot \frac{x'}{|x'|}\, d\mathcal{H}^{1}\,.
\end{align*}
Hence, recalling \eqref{w523} and the fact that $\|H_n- \tilde H_n\|_{H^{\frac12}(\pa E_n)}$ is bounded thanks to \eqref{w521} and \eqref{w522}, we get that ${\tilde H_n}$ are bounded. Therefore the claim \eqref{claim1111} follows.

By applying the Sobolev embedding theorem on each connected component of $\partial F$ we have that $\|H_n\|_{L^4(E_n)}$ is bounded. This fact, together with the uniform $C^1$ bounds on $\pa E_n$ implies that if we write 
$$
\pa E_n:=\{x+\psi_n(x):\, x\in \pa F\}\,,
$$ 
then $\sup_{n}\|\psi_n\|_{W^{2,4}(\pa F)}<+\infty$.  This follows by standard elliptic estimates, see \cite[Lemma~7.2 and Remark~7.3]{AFM}. Thus,    up to a (not relabeled) subsequence, there exists a set $F'\in \mathfrak{C}^1_M(F, U)$ such that 
$$
\psi_n\to \psi_{F'} \text{ in $C^{1,\alpha}(\pa F)$}\quad\text{and}\quad v_n\to v_{F'}\text{ in $C^{1,\beta}(\T^3)$ } \quad\text{for all $\alpha\in (0,\tfrac12)$ and $\beta\in (0,1)$.}
$$
% Recalling \eqref{w522} and \eqref{claim1111} and  using  the Sobolev compact embedding on each component of $\partial F$ we conclude that there exists $w$ and a further (not relabelled) subsequence  such that $w_n\big(\cdot + \psi_n(\cdot)\nu_F(\cdot)\big)\to w$  in $L^p(\pa F)$ for all $p\in [1, 4)$ and, in turn,
% \begin{multline*}
% H_n\big(\cdot + \psi_n(\cdot)\nu_F(\cdot)\big)\to w-4\gamma v_{F'}\big(\cdot + \psi_{F'}(\cdot)\nu_F(\cdot)\big)\\=
% H_{\pa F'}\big(\cdot + \psi_{F'}(\cdot)\nu_F(\cdot)\big)\quad\text{in }L^{p}(\pa F)\text{ for $p\in [1, 4)$}\,.
% \end{multline*}
From  \eqref{claim1111} and Lemma~\ref{noia} we have that the functions $\psi_n$ are bounded in $W^{\frac52,2}(\pa F)$. Hence the first part of the statement follows.
 \par\noindent
 {\bf Step 2.} For the second part we first observe that if 
 $$
 \int_{\T^3}|Dw_n|^2\, dx\to 0
 $$
 then the above arguments yield the existence of $\lambda\in \R$ and a  (not relabelled) subsequence such that $w_n\big(\cdot + \psi_n(\cdot)\nu_F(\cdot)\big)\to \lambda$  in $H^{\frac12}(\pa F)$. In turn,
$$
 H_n\big(\cdot + \psi_n(\cdot)\nu_F(\cdot)\big)\to \lambda-4\gamma v_{F'}\big(\cdot + \psi_{F'}(\cdot)\nu_F(\cdot)\big)=
 H_{\pa F'}\big(\cdot + \psi_{F'}(\cdot)\nu_F(\cdot)\big)\qquad\text{in }H^{\frac12}(\pa F)\,.
 $$
To conclude the proof we need to show that  $\psi_n$ converge to $\psi:=\psi_{F'}$ in $W^{\frac52,2}(\pa F)$. To this aim, fix $\e>0$. By straightening locally the boundary of $F$, we may always reduce to the case where the functions $\psi_n$ are defined on a disk $B'\subset\R^2$, are bounded in $W^{\frac52,2}(B')$, converge in $W^{2,p}(B')$ for all $p\in[1,4)$ to  $\psi\in W^{\frac52,2}(B')$ and 
 $\|D\psi\|_{L^\infty(B')}\leq\e$. We fix a cut-off function $\varphi$ with compact support in $B'$ and we write
  \begin{align*}
  \frac{\Delta(\varphi\psi_n)}{\sqrt{1+|D\psi_n|^2}}-\frac{\Delta(\varphi\psi)}{\sqrt{1+|D\psi|^2}} &=(D^2(\varphi\psi_n)-D^2(\varphi\psi))\frac{D\psi D\psi}{(1+|D\psi|^2)^{\frac32}}\\
 &\qquad+ D^2(\varphi\psi_n)\biggl(\frac{D\psi_nD\psi_n}{(1+|D\psi_n|^2)^{\frac32}}-\frac{D\psi D\psi}{(1+|D\psi|^2)^{\frac32}}\biggr)\\
 &\qquad+\varphi(H_n-H)+R(x,\psi_n,D\psi_n)-R(x,\psi,D\psi)\,,
   \end{align*}
   where the remainder term is $R$ is similar to the one in \eqref{noia1}.
  Then,  using Lemma~\ref{nicola1} with $\gamma\in(0,\frac12)$, an argument similar to the one of the proof of Lemma~\ref{noia} shows that
 \begin{align*}
 &\bigg[\frac{\Delta(\varphi\psi_n)}{\sqrt{1+|D\psi_n|^2}}-\frac{\Delta(\varphi\psi)}{\sqrt{1+|D\psi|^2}}  \bigg]_{\frac12}\leq C(M)\big(\e^2[D^2(\varphi\psi_n)-D^2(\varphi\psi)]_{\frac12}\\
& \qquad+\|D^2(\varphi\psi_n)-D^2(\varphi\psi)\|_{L^{\frac{4}{1+\gamma}}}\|D\psi\|_{L^\infty}^\gamma\|D^2\psi\|_{L^4}^{1-\gamma}+
[D^2(\varphi\psi_n)]_{\frac12}\|D\psi_n-D\psi\|_{L^\infty}\\
& \qquad+\|D^2(\varphi\psi_n)\|_{L^{\frac{4}{1+\gamma}}}\|D\psi_n-D\psi\|_{L^\infty}^\gamma(\|D^2\psi_n\|_{L^4}+\|D^2\psi\|_{L^4})^{1-\gamma}\\
& \qquad
+\|H_n-H\|_{H^{\frac12}}+\|\psi_n-\psi\|_{W^{2,2}}\big)\,.
 \end{align*}
Using Lemma~\ref{nicola1} again to estimate $[\Delta(\varphi\psi_n)-\Delta(\varphi\psi)]_{\frac12}$ with the seminorm on the left hand side of the previous inequality and arguing as in the proof of Lemma~\ref{noia} we finally get
$$
[D^2(\varphi\psi_n)-D^2(\varphi\psi)]_{\frac12}\leq C(M)\big(\|\psi_n-\psi\|_{W^{2,\frac{4}{1+\gamma}}}
+\|D\psi_n-D\psi\|_{L^\infty}^\gamma+\|H_n-H\|_{H^{\frac12}}\big)\,,
$$
from which the conclusion follows.
\end{proof}
\subsection{The surface diffusion flow: proof of technical lemmas}\label{subsec:SD}
We start by providing the computations leading to the crucial energy identities of Lemma~\ref{calculationsbis}.

\begin{proof}[Proof of Lemma~\ref{calculationsbis}]
Let   $\Psi$, $\Psi_t$, $X_t$ be as in the proof of Lemma~\ref{calculations}, and note that by \eqref{SD} we have
\beq\label{xtnubis}
X_t\cdot \nu_t=\Delta_\tau H_t \qquad\text{ on $\pa E_t$.}
\eeq
Fix $t\in (0,T)$,  and as in Lemma~\ref{calculations} set $\Phi_s:=\Psi_{t+s}\circ \Psi_t^{-1}$,  so that  $(\Phi)_{s\in (-t, T-t)}$ is an admissible  one-parameter family  of diffeomorphisms according to Definition~\ref{def:admissibleX}.  Then, by Theorem~\ref{th:12var} we get
\begin{align*}
\frac{d}{dt}J(E_t)&=\frac{d}{ds}J(\Phi_s(E_t))_{\bigl|_{s=0}}\\
&=\int_{\pa E_t}H_{t}X_t\cdot \nu_t\, d\Ha^{2}=
\int_{\pa E_t}H_{t}\Delta_\tau H_t\, d\Ha^{2}=-\int_{\pa E_t}|D_\tau H_{t}|^2\, d\Ha^{2}\,. 
\end{align*}
This establishes \eqref{der of Jbis}.
 Let us fix a time $t>0$. To continue we  observe that, by redefining the velocity field if needed (in a time interval centered at $t$), we may assume that $X_t$ has only a normal component on $\pa E_t$; that is, 
 \beq\label{normonly}
 X_t=(X_t\cdot \nu_t)\nu_t\qquad\text{on $\pa E_t$.}
 \eeq 
Recall that all the geometric quantities can be extended in a neighborhood of $\pa E_t$ by means of the gradient of the signed distance function from $E_t$ (see the proof of Lemma~\ref{calculations}). Now, arguing as in \eqref{nupunto}, we have
\beq\label{nupuntobis}
\dot{\nu}_t=-D_\tau(X_t\cdot\nu_t)=-D_\tau\Delta_\tau H_t  \qquad\text{on $\pa E_t$,} 
\eeq
where the last equality follows again by \eqref{xtnubis}. In turn, using also \eqref{normonly} and \eqref{nupunto}
\begin{multline} \label{CMM3bis}
 \frac{\partial }{\partial s} (DH_{{t+s}}\circ \Phi_s)\Bigl|_{s=0}  
= D  \diver_\tau(\dot{\nu_t}) + D^2H_t [X_t] = - D (\Delta_\tau (\Delta_\tau H_t)) + (\Delta_\tau H_t) D^2H_t \nu_t    
\end{multline}
on $\pa E_t$.
Denoting by $D_{\tau_{t+s}}$ the tangential differential on $\pa E_{t+s}$ and by $J_\tau\Phi_s$ the  tangential Jacobian
of $\Phi_s$,   we have  
\beq\label{der1SD}
\begin{split}
&\frac{d}{ds} \left(\frac{1}{2}   \int_{\partial E_{t+s}} |D_\tau H_{t+s}|^2\, d \Ha^{2} \right)\Bigl|_{s=0} =   \frac{d}{ds} \left(\frac{1}{2}   \int_{\partial E_{t}} |D_{\tau_{t+s}} H_{t+s}|^2\circ\Phi_s J_\tau\Phi_s\, d \Ha^{2}\right) \Bigl|_{s=0} \\
&= \frac{1}{2}   \int_{\partial E_{t}} |D_\tau H_{t}|^2 \diver_\tau (\Delta_\tau H_t\, \nu_t )\, d \Ha^{2} +   \int_{\partial E_{t}} D_\tau H_{t} \cdot \frac{\pa }{\pa s}  \left( D_{\tau_{t+s}} H_{t+s}\circ \Phi_s \right)\Bigl|_{s=0}\, d \Ha^{2}.
\end{split}
\eeq
We write the last term  as
\[
D_{\tau_{t+s}} H_{t+s}\circ \Phi_s   = \left[ I - \nu_{t+s}\circ \Phi_s  \otimes \nu_{t+s}\circ \Phi_s   \right] DH_{t+s}\circ \Phi_s 
\]
and get by \eqref{normonly}, \eqref{dnuacca}, \eqref{nupuntobis} and \eqref{CMM3bis}
\begin{align}\label{nasty10}
&\quad\qquad\frac{\pa }{\pa s}  \left( D_{\tau_{t+s}} H_{t+s}\circ \Phi_s \right)\Bigl|_{s=0} =   (-  \dot{\nu}_t \otimes \nu_t - \nu_t \otimes  \dot{\nu}_t) DH_t + \left[ I - \nu_t \otimes \nu_t  \right]  \frac{\partial }{\partial t} (DH_{{t}}\circ \Phi_t)\nonumber\\
&\qquad\qquad\qquad=- |B_t|^2 D_\tau \Delta_\tau H_t -  DH_t\cdot \dot{\nu}_t\,  \nu_t -  D_\tau \Delta_\tau \Delta_\tau H_t  + \Delta_\tau H_t  \left[ I - \nu_t \otimes \nu_t  \right]D^2H_t \nu_t\,.
\end{align}
In order to calculate $D^2H_t \nu_t$ we differentiate the equation \eqref{dnuacca}  and get
\[
-D |B_t|^2 = D  (DH_t\cdot  \nu_t) = D^2H_t \nu_t + D\nu_t DH_t. 
\]
Therefore, since $B_t= D\nu_t$ and $B_t\nu_t = 0$ we get
\[
D^2H_t \nu_t = -D |B_t|^2 - B D_\tau H_t.
\]
Plugging the last identity in \eqref{nasty10} and using again \eqref{nupuntobis}, we may continue from \eqref{der1SD} to obtain
\beq\label{der2SD}
\begin{split}
&\frac{d}{ds}\left(\frac{1}{2}   \int_{\partial E_{t+s}} |D_\tau H_{t+s}|^2\, d \Ha^{2} \right) \Bigl|_{s=0} = \frac{1}{2}   \int_{\partial E_{t}}H_t  |D_\tau H_t|^2 \Delta_\tau H_t \, d \Ha^{2}  \\
&-  \int_{\partial E_{t}}|B_t|^2 \,  D_\tau H_t\cdot D_\tau\Delta_\tau H_t\, d \Ha^{2} -  \int_{\partial E_{t}}  D_\tau H_t\cdot D_\tau\Delta_\tau\Delta_\tau H_t\, d \Ha^{2}\\
&- \int_{\partial E_{t}} (\Delta_\tau H_t)  D_\tau |B_t|^2\cdot D_\tau H_t\, d \Ha^{2} -\int_{\partial E_t}  B[ D_\tau H_t] \Delta_\tau H_t \, d \Ha^{2}.
\end{split}
\eeq
Integrating the third term on the right-hand side by parts twice, we get
\[
-  \int_{\partial E_{t}}  D_\tau H_t\cdot D_\tau\Delta_\tau\Delta_\tau H_t\, d \Ha^{2}= - \int_{\partial E_t} |D_\tau\Delta_\tau H_t|^2\, d \Ha^{2}\,.
\]
Integrating the second last term on the right-hand side by parts once, we have 
\[
\begin{split}
&- \int_{\partial E_{t}} (\Delta_\tau H_t)  D_\tau |B_t|^2\cdot D_\tau H_t\, d \Ha^{2} \\
&=\int_{\partial E_{t}}|B_t|^2  D_\tau H_t\cdot  D_\tau\Delta_\tau H_t\, d \Ha^{2} 
+   \int_{\partial E_t} |B_t|^2|\Delta_\tau H_t|^2  \, d \Ha^{2}. 
\end{split}
\]
Plugging the last two identities into \eqref{der2SD} and recalling \eqref{J2} (with $\gamma=0$), the identity \eqref{der of DH} follows. 
\end{proof}

\begin{proof}[Proof of Lemma~\ref{laplacian}] 
In the following proof, in order to simplify the notation we drop the dependence on  $\pa E$ from all the geometric objects and the $L^p$ spaces involved.
Let us first show  
\begin{equation}
\label{laplacian 1}
\int_{\partial E} |D_\tau^2 f|^2 \, d \Ha^2 \leq C \int_{\partial E} | \Delta_\tau f|^2 \, d \Ha^2  + C \int_{\partial E} |B|^2|D_\tau f|^2  \, d \Ha^2.
\end{equation}
Indeed, recalling the following formula (see \cite[Eq.~(10.16)]{Giu})
\begin{equation}
\label{laplacian 2}
\delta_i \delta_j  = \delta_j \delta_i + (\nu_i \delta_j \nu_k - \nu_j \delta_i \nu_k )\delta_k
\end{equation}
and integrating by parts we get
\[
\begin{split}
&\int_{\partial E} |D_\tau^2 f|^2 \, d \Ha^2  =  \int_{\partial E} (\delta_i \delta_j f)  \, (\delta_i \delta_j f)    \, d \Ha^2 \\
&= \int_{\partial E} (\delta_i \delta_j f)  \, (\delta_j \delta_i f)    \, d \Ha^2   + \int_{\partial E} (\delta_i \delta_j f) (\nu_i \delta_j \nu_k - \nu_j \delta_i \nu_k )\delta_k f  \,   d \Ha^2\\
&= - \int_{\partial E} \delta_j f  \, (\delta_i \delta_j \delta_i f)    \, d \Ha^2 +  \int_{\partial E} H \nu_i \delta_j f  \, ( \delta_j \delta_i f)    \, d \Ha^2  + \int_{\partial E} (\delta_i \delta_j f) (\nu_i \delta_j \nu_k - \nu_j \delta_i \nu_k )\delta_k f  \,   d \Ha^2\\
&\leq  - \int_{\partial E} \delta_j f  \, (\delta_i \delta_j \delta_i f)    \, d \Ha^2 +  C \int_{\partial E} |B|\,|D_\tau f|\, |D_\tau^2 f|\,d \Ha^2.
\end{split}
\]
Using \eqref{laplacian 2} and  integrating by parts again, we obtain 
\[
\int_{\partial E} |D_\tau^2 f|^2 \, d \Ha^2 \leq \int_{\partial E}  (\delta_i \delta_i f)  \, (\delta_j \delta_j f)    \, d \Ha^2  \, d \Ha^2 +  C \int_{\partial E} |B|\, |D_\tau f|\, |D_\tau^2 f|\,d \Ha^2.
\]
The inequality \eqref{laplacian 1} follows  since $\Delta_\tau f = \delta_i \delta_i f$.

We estimate the  last term in \eqref{laplacian 1} by Lemma \ref{interpolation}: 
\[
\begin{split}
 \int_{\partial E} |B|^2|D_\tau f|^2  \, d \Ha^2 &\leq \|B\|_{L^4}^2   \|D_\tau  f\|_{L^4}^2 \\
&\leq  C \|B\|_{L^4}^2 \left( \|D_\tau^2 f\|_{L^2}  \|D_\tau  f\|_{L^2}  + \|D_\tau  f\|_{L^2}^2  \right)\,.
\end{split}
\]
Plugging in \eqref{laplacian 1} and  by an application of Young's inequality, we get 
\beq\label{aY}
\begin{split}
\|D_\tau^2 f\|_{L^2}^2 &\leq  C \left( \|\Delta_\tau f\|_{L^2}^2 +  \|D_\tau  f\|_{L^2}^2 ( \|B\|_{L^4}^2 +  \|B\|_{L^4}^4 ) \right) \\
&\leq  C \left( \|\Delta_\tau f\|_{L^2}^2 +  \|D_\tau  f\|_{L^2}^2 ( 1+  \|B\|_{L^4}^4 ) \right).
\end{split}
\eeq
Now, note that (with the same notation introduced in Lemma~\ref{interpolation})
\beq\label{poin}
\begin{split}
\|D_\tau  f\|_{L^2}^2  &= -\int_{\partial E}  f \Delta_\tau f \, d \Ha^2   = -  \int_{\pa E}(f- \bar f)  \Delta_\tau f \, d \Ha^2 \\
&\leq\|f- \bar f\|_{L^2}\|\Delta_\tau f \|_{L^2}\leq C \|D_\tau  f\|_{L^2} \|\Delta_\tau f \|_{L^2}\,.
\end{split}
\eeq
Note that in the second equality above we have used the fact that $\Delta_\tau f$ has zero average on each connected component of $\pa E$.
Thus,  from \eqref{aY} we deduce  
\[
\|D_\tau^2 f\|_{L^2}^2 \leq  C \|\Delta_\tau f\|_{L^2}^2 ( 1 + \|B\|_{L^4}^4 ).
\]
By a standard application of Calderon-Zygmund estimate we have
\[
 \|B\|_{L^4} \leq C(1+  \|H\|_{L^4}),
\] 
with $C$ depending only the  $C^{1}$-bounds on $\pa E$, and the conclusion follows.
\end{proof}

We now show the geometric interpolation used in the proof of Theorem~\ref{main thm 2}.

\begin{proof}[Proof of Lemma~\ref{nasty}] Also here to simplify the notation we drop the dependence on $\pa E$ both from the geometric objects and the $L^p$ spaces.
First  by H\"older's inequality 
\[
\int_{\partial E} |B| |D_\tau H|^2 |\Delta_\tau H| \, d \Ha^{2} \leq \|\Delta_\tau H\|_{L^3} \left(  \int_{\partial E} |B|^\frac{3}{2}|D_\tau H|^3 \, d \Ha^{2} \right)^{2/3}.
\]
By the Poincar\'e Inequality stated in Lemma~\ref{interpolation} we get 
\[
 \|\Delta_\tau H\|_{L^3} \leq C  \| D_\tau (\Delta_\tau H)\|_{L^2}.
\]
In turn, H\"older's inequality implies 
\[
 \left(  \int_{\partial E} |B|^\frac{3}{2}|D_\tau H|^3 \, d \Ha^{2} \right)^{2/3} \leq  \left(  \int_{\partial E}|D_\tau H|^{4} \, d \Ha^{2} \right)^{1/2}\left(  \int_{\partial E}|B|^{6} \, d \Ha^{2} \right)^{1/6}.
\]
Lemma  \ref{interpolation} yields
\[
\left(  \int_{\partial E}|D_\tau H|^{4} \, d \Ha^{2} \right)^{1/2} \leq C  \left( \|D_\tau^2 H\|_{L^2}  \|D_\tau  H\|_{L^2}  + \|D_\tau  H\|_{L^2}^2  \right).
\]
Combining all the inequalities above, we get 
\[
\int_{\partial E} |B| |D_\tau H|^2 |\Delta_\tau H| \, d \Ha^{2} \leq C  \| D_\tau (\Delta_\tau H)\|_{L^2}  \,  \|B\|_{L^6} \,   \|D_\tau H\|_{L^2} (\|D_\tau^2 H\|_{L^2}  + \|D_\tau  H\|_{L^2}). 
\]
By Lemma \ref{laplacian} and  \eqref{poin} (with $D_\tau H$ in place of $D_\tau f$), the right-hand side of the above inequality can be estimated from above  by 
\[
C \| D_\tau (\Delta_\tau H)\|_{L^2}  \, \|B\|_{L^6}  \,   \|\Delta_\tau H \|_{L^2} \,  \|D_\tau H\|_{L^2}  \, (1  + \| H\|_{L^4}^2).
\]
The conclusion follows from the  Poincar\'e Inequality
$$
\|\Delta_\tau H\|_{L^2} \leq C  \| D_\tau (\Delta_\tau H)\|_{L^2}.
$$
 and the  Calderon-Zygmund estimate
\[
 \|B\|_{L^6} \leq C(1+  \|H\|_{L^6})\,.
\] 
\end{proof}

We conclude with the proof of the geometric Poincar\'e Inequality stated in Lemma~\ref{lm:geopoinc}.

\begin{proof}[Proof of Lemma~\ref{lm:geopoinc}]
Since $\int_{\pa E}(H_{\pa E}-\overline H_{\pa E})\nu_E\, d\Ha^2=0$, we may apply 
Lemma~\ref{from AFM}, with $\e=1$ and $\vphi:=H_{\pa E}-\overline H_{\pa E}$, and recall \eqref{J2} (with $\gamma=0$) to obtain
\begin{multline*}
\sigma \int_{\pa E}|H_{\pa E}-\overline H_{\pa E}|^2\, d\Ha^2\\ \leq \int_{\pa E}|D_\tau H_{\pa E}|^2\, d\Ha^2-\int_{\pa E}|B_{\pa E}|^2|H_{\pa E}-\overline H_{\pa E}|^2\, d\Ha^2 \leq \int_{\pa E}|D_\tau H_{\pa E}|^2\, d\Ha^2\,.
\end{multline*}
The conclusion follows. 
\end{proof}

\section*{Acknowledgment}
The work of V. Julin was partially funded by the Academy of Finland grant 268393. The work of N. Fusco, V. Julin and M. Morini has been partially carried on at the University of Jyv\"{a}skyl\"{a} and supported by the FiDiPro project 2100002028.
The friendly atmosphere of the Mathematics and Statistics Department of  Jyv\"{a}skyl\"{a} is warmly acknowledged.

\end{document}